\documentclass[11pt,reqno]{amsart}
\usepackage{t1enc}
\usepackage[latin1]{inputenc}
\usepackage{amsmath,latexsym,amssymb,graphicx,dsfont,amsthm,amsfonts}
\usepackage{color}
\usepackage{umoline}

\frenchspacing
\setlength{\parindent}{0pt}
\setlength{\parskip}{5pt plus 2pt minus 1pt}
\setcounter{secnumdepth}{3}
\setcounter{tocdepth}{3}

\setlength{\oddsidemargin}{5mm}
\setlength{\evensidemargin}{5mm}
\setlength{\textwidth}{150mm}
\setlength{\headheight}{0mm}
\setlength{\headsep}{12mm}
\setlength{\topmargin}{0mm}

\newcommand{\R}{\mathbb{R}}

\newtheorem{Th}{Theorem}[section]
\newtheorem{Lemma}[Th]{Lemma}
\newtheorem{Cor}[Th]{Corollary}
\newtheorem{Prop}[Th]{Proposition}

\newtheorem{Ex}[Th]{Example}
\newtheorem{Ass}[Th]{Assumption}

\newcommand{\E}{\mathbb{E}}
\newcommand{\Prob}{\mathbb{P}}

\newcommand{\calL}{\mathcal{L}}
\newcommand{\calA}{\mathcal{A}}
\newcommand{\calB}{\mathcal{B}}

\newcommand{\calI}{\mathcal{I}}

\newcommand{\N}{\mathbb{N}}

\newcommand{\W}{\mathbf{W}}
\newcommand{\Y}{\mathbf{Y}}
\newcommand{\ZZ}{\mathbf{Z}}
\newcommand{\e}[1]{\mathbf{e}_{#1}}

\newcommand{\he}[1]{\hat{\mathbf{e}}_{#1}}
\newtheorem{Rem}[Th]{Remark}

% Roman and not numbered

\numberwithin{equation}{section}

\begin{document}

\title[Asymptotic Entropy of Random Walks on Regular Languages]{Asymptotic Entropy of Random Walks on Regular Languages over a Finite Alphabet}
%\author{Lorenz A. Gilch\footnotemark[1]}

\author{Lorenz A. Gilch}

%\footnote{Universit\'e de Gen\`eve; email: Lorenz.Gilch@unige.ch; supported by German Research Foundation (DFG) grant
%GI 746/1-1}

\address{Lorenz Gilch: Graz University of Technology, Steyrergasse 30/III/C304,
  8010 Graz, Austria}

\email{Lorenz.Gilch@freenet.de}
\urladdr{http://www.math.tugraz.at/$\sim$gilch/}
\date{\today}
\subjclass[2000]{Primary: 60J10; Secondary: 28D20} 
\keywords{random walks, regular languages, entropy, analytic}

\maketitle

%\centerline{\scshape Lorenz A. Gilch}
%\medskip
%{\footnotesize
% \centerline{Graz University of Technology, Graz, Austria}}

%\let\oldthefootnote\thefootnote
%\renewcommand{\thefootnote}{\fnsymbol{footnote}}
%\footnotetext[1]{Supported by German Research Foundation (DFG) grant GI 746/1-1.}
%\let\thefootnote\oldthefootnote

\begin{abstract}
We prove existence of asymptotic entropy of random walks on regular languages
over a finite alphabet and we give formulas for it. Furthermore, we show that the entropy varies
real-analytically in terms of probability measures of constant support,
which describe the random walk. This setting applies, in particular, to random
walks on virtually free groups.
\end{abstract}
%{\scshape Keywords:} Random Walks, Free Products by Amalgamation, Entropy.
%\newline {\scshape AMS 2000 Mathematics Subject Classification: } Primary ; Secondary .

\section{Introduction}

Let $\calA$ be a finite alphabet and let $\calA^\ast$ be the set of all finite
words over the alphabet $\calA$, where $o$ denotes the empty word. Consider a transient
Markov chain $(X_n)_{n\in\N_0}$ on $\calA^\ast$ with $X_0=o$ such that at each
instant of time the last $K\in\N$ letters of the
current word may be replaced by a word of length of at most $2K$ and the
transition probabilities depend only on the last $K$ letters of the
current word and on the replacing word. For better visualization and ease of presentation, we also consider the random walk on
$\calA^\ast$ as a random walk on an undirected graph $\mathcal{G}$.
%, the word length of two consecutive words of the process differs only by at most $K$ letters and in each
%step only the last $K\in\N$ letters of the current word may be modified. 
Denote by $\pi_n$ the
distribution of $X_n$. We are interested whether the sequence
$\frac{1}{n}\mathbb{E}[-\log\pi_n(X_n)]$ converges, and if so to describe the
limit. If it exists, it is called the \textit{asymptotic entropy}, which was
introduced by Avez \cite{avez72}. The aim of this paper is to prove existence
of the asymptotic entropy, to describe it as the rate of escape w.r.t. the Greenian distance and
to prove its real-analytic behaviour when varying the transition probabilities of
constant support. %In order to show this we will make of use of the
%consideration of $(X_n)_{n\in\N_0}$ as a random walk on an undirected graph,
%where we will construct nested sequences of cones.
\par
We outline some background on this topic. Random Walks on regular languages
have been studied by e.g. Lalley \cite{lalley} and Malyshev \cite{malyshev}
amongst others. Concerning asymptotic entropy it is well-known by Kingman's
subadditive ergodic theorem (see Kingman \cite{kingman}) that the entropy
exists for random walks on groups if $\mathbb{E}[-\log \pi_1(X_1)]<\infty$. In contrast to this fact existence of the
entropy on more general structures is not known a priori. In our setting we are not
able to apply the subadditive ergodic theorem since we neither have 
subadditivity nor a global composition law of words if the random
walk is performed on a proper subset of $\calA^\ast$ (that is, not every word
$w\in\calA^\ast$ can be reached from $o$ with positive probability). This forces us to use other
techniques like generating functions techniques. These generating functions are
power series with probabilities as coefficients, which describe the
characteristic behaviour of the underlying random walks. The technique of
our proof of existence of the entropy was motivated by Benjamini and Peres
\cite{benjamini-peres94}, where it is shown that for random walks on groups the
entropy equals the rate of escape w.r.t. the Greenian distance; compare also
with Blach\`ere, Ha\"issinsky and Mathieu \cite{blachere-haissinsky-mathieu}.
In particular, we will also show 
that the  asymptotic entropy $h$ is the rate of escape w.r.t. a distance function in terms
of Green functions, which in turn yields that $h$ is also the rate of escape
w.r.t. the Greenian distance. Moreover, we prove convergence in probability and convergence in $L_1$ of the sequence
$-\frac{1}{n} \log \pi_n(X_n)$ to $h$, and we show also that $h$ can be computed along
almost every sample path as the limes inferior of the aforementioned
sequence. The question of almost sure convergence of $-\frac{1}{n}\log \pi_n(X_n)$ to some constant $h$, however, remains open. Similar results concerning existence
and formulas for the entropy are proved in Gilch and M\"uller \cite{gilch-mueller} for random walks
on directed covers of graphs and in Gilch \cite{gilch:11} for random walks on
free products of graphs. Furthermore, we give formulas for the entropy which
allow numerical computations and also exact calculations in some special
cases. The main idea in our proofs is to fix a priori a sequence of nested
cones in the associated graph $\mathcal{G}$ and to track the random walk's way to infinity through these
cones. Similar ideas have been used independently by Woess \cite{woess12} for
context-free pairs of groups.
\par
Kaimanovich and Erschler asked whether drift and entropy of random walks
vary continuously (or
even analytically) when varying the probabilities of the random walk
with keeping the support of single step transitions constantly. In
this article we also show that $h$ is real-analytic in
terms of the parameters describing the random walk on $\calA^\ast$. This fact
applies, in particular, to the case of bounded range random walks on virtually
free groups, which goes beyond the scope of previous results related to the
question of analyticity. At this
point let us summarize several papers
concerning continuity and analyticity of the drift and entropy that have been
published recently: e.g., see Ledrappier \cite{ledrappier12}, \cite{ledrappier12-2},
Ha\"issinsky, Mathieu and M\"uller \cite{haissinsky-mathie-mueller12}, Gilch \cite{gilch:11}. The
recent survey article of Gilch and Ledrappier \cite{gilch-ledrappier13} collects several
results about analyticity of drift and entropy of random walks on groups.
\par
The basic reasoning of our proofs follows a similar argumentation as in
\cite{gilch-mueller} and \cite{gilch:11}, but since a straight-forward adaption is
not possible we have to do more effort in the present setting:
we will show that the entropy equals the rate of escape w.r.t. some special length function,
and we deduce the proposed properties analogously. For the proof of analyticity of
the entropy we will extract a hidden Markov chain from our random walk and we
will apply a result of Han and Marcus \cite{han-marcus06}.
The plan of the paper is as follows: in Sections \ref{sec:notation} and
\ref{sec:genfun} we define the random walk on
$\calA^\ast$ and the associated generating functions. Section
\ref{sec:cones} explains the construction of cones in the present context. In
Sections \ref{sec:exit-times} and \ref{sec:entropy} we prove existence of the
asymptotic entropy and give a formula for it, while in Section
\ref{sec:entropy-calculation} we give estimates and a more explicit formula in some special case. Section \ref{sec:entropy-analyticity} shows real-analyticity of the entropy.

\section{Random Walks on Regular Languages}
\label{sec:notation}

\subsection{Definitions and Main Results}

Let $\calA$ be a finite alphabet and denote by $\mathcal{A}^\ast$ the set of
all finite words over $\mathcal{A}$. We write $o$ for the empty word and
$\mathcal{A}^n$, $n\in\N$, for the set of all words over $\calA$ consisting of
exactly $n$ letters. For two
words $w_1,w_2\in\calA^\ast$, $w_1w_2$ denotes the concatenated word. A
\textit{random walk on a regular language} is a Markov chain $(X_n)_{n\in\N_0}$ on the set
$\calA^\ast=\bigcup_{n\geq 1} \calA^n\cup \{o\}$, whose transition probabilities obey the following rules:
\begin{enumerate}
\item[(i)] Only the last two letters of the current word may be modified.
\item[(ii)] Only one letter may be adjoined or deleted at one instant of time.
\item[(iii)] Adjunction and deletion may only be done at the end of the current word.
\item[(iv)] Probabilities of modification, adjunction or deletion depend only on the
  last two letters of the current word and on the substitute letters.
\end{enumerate}
Compare with Lalley \cite{lalley} and Gilch \cite{gilch:08}. In other words, at
each step the last two letters of the current word may be replaced by a
non-empty word of
length of at most $3$ and the transition probabilities depend only on the last
two letters of the current word and the replacing word of length of at most $3$.
More formally, the
transition probabilities of the Markov chain $(X_n)_{n\in\N_0}$ can be written
as follows, where $w\in\calA^\ast$, $a_1,a_2,b_1,b_2,b_3\in \calA$:
\begin{eqnarray}
\Prob[X_{n+1}=wb_1b_2 \mid X_n=wa_1a_2]  &=&  p(a_1a_2,b_1b_2),\nonumber\\
\Prob[X_{n+1}=wb_1b_2b_3 \mid X_n=wa_1a_2]  &=&  p(a_1 a_2,b_1b_2b_3),\nonumber\\
\Prob[X_{n+1}=wb_1 \mid X_n=wa_1 a_2]  & =&  p(a_1 a_2,b_1),\nonumber\\
\Prob[X_{n+1}=b_1 \mid X_n=a_1]  & = &  p(a_1,b_1),\label{equ:random-walk}\\
\Prob[X_{n+1}=b_1b_2 \mid X_n=a_1]  & = &  p(a_1,b_1b_2),\nonumber\\
\Prob[X_{n+1}=o \mid X_n=a_1]  &=&  p(a_1,o),\nonumber\\
\Prob[X_{n+1}=b_1 \mid X_n=o]  & = &  p(o,b_1),\nonumber\\
\Prob[X_{n+1}=o \mid X_n=o] & =&  p(o,o).\nonumber
\end{eqnarray}
Not all of these probabilities need to be strictly positive. Initially, we set
$X_0:=o$.  If we start the random walk at $w\in \calA^\ast$ instead of $o$, we write $\Prob_w[\,\cdot\,]:=\Prob[\,\cdot \mid X_0=w]$.
%For two words $w_1,w_2\in \calA^\ast$, we write $w_1w_2$ for the concatenated
%word. 
 For $w_1,w_2\in\calA^\ast$, the $n$-step transition probabilities are denoted by
$p^{(n)}(w_1,w_2):=\Prob_{w_1}[X_n=w_2]$. The set of \textit{accessible words} from $o$ is given by
$$
\calL = \bigl\lbrace w\in\calA^\ast \mid \exists n\in\N: \Prob[X_n=w\mid X_0=o]>0\bigr\rbrace.
$$
We will also think of the random walk $(X_n)_{n\in\N_0}$ as a nearest
neighbour random walk on an \textit{undirected graph} $\mathcal{G}$, where the
vertices are the elements of $\calL$ and undirected edges are between two
vertices if and only if one can walk from one word to the other one in a single
step. For this purpose, we need the following assumption:
\begin{Ass}[Weak symmetry]\label{ass:weak}
For all $u,v\in\calA^\ast$ we assume that $\mathbb{P}_u[X_1=v]>0$ implies $\mathbb{P}_v[X_1=u]>0$. We call this property \textit{weak symmetry}.
\end{Ass}
%Assumption \ref{ass:weak} allows us to regard $(X_n)_{n\in\mathbb{N}_0}$ as a
%nearest neighbour random walk on an undirected graph $\mathcal{G}$. 
In particular, Assumption \ref{ass:weak} yields 
irreducibility of the random walk on $\calL$. Moreover, this assumption will be
necessary for the construction of a sequence of cones in the graph
$\mathcal{G}$ which track the random
walk's way to infinity. As the interested reader will see, weak symmetry can obviously be
weakened in some way but for reason of better readability we keep this natural
assumption; for a discussion on this assumption, we refer to Appendix \ref{sub:weak-symmetry}.
\par
Since the purpose of the paper is the investigation of the asymptotic behaviour of
transient random walks, we obviously need that $\calL$ is
infinite in our setting. It is an easy exercise to check that the set $\calL$ is a \textit{regular language} over the alphabet $\calA$, that
is, the words are accepted by a finite-state automaton. For more details on
regular languages, we refer e.g. to Hopcraft and Ullman \cite{hopcraft-ullman}. Since we make no further use of the
theory of languages, we will not discuss this in more detail but we remark the
recursive structure of regular languages. Let us note that bounded range random walks on \textit{virtually
free groups} constitute a special case of our setting, and our results directly
apply; see Subsection \ref{subsub:vfg}. Thus, our results apply directly to a
large class of random walks on groups and go beyond recent
results for random walks on groups.
\par
\begin{Rem}
Observe that the assumption that transition probabilities depend only on
the \emph{last two} letters of the current word and that changes of the current
word involve only the last two letters may be weakened to dependence and
changes of the
last $K\in\N$ letters by blocking words of length of at most $K$ to new single letters
(see \cite[Section 3.3]{lalley} for further details and comments; note that it is not sufficient to consider the case where the transition probabilities/changes of words involve only the last letter in order to be able to apply this recoding trick!). In the \emph{$K$-dependent case} the
general transition probabilities have the form
\begin{equation}\label{equ:K-random-walk}
\Prob\bigl[X_{n+1}=wy \mid X_n=wx] =p(x,y),
\end{equation}
where $w,x,y\in\calA^\ast$ with $x$ being a word consisting of $K$ letters and
$y$ being a  word consisting of at most $2K$ letters. In this paper we will
restrict our attention to the case of dependence on the last two letters of the
current word as defined via (\ref{equ:random-walk}). 
If we  make further assumptions on our random walk in the following, we will show that it does not depend on the
fact if we use the ``blocked letter language'' (that is, dependence on the last
two letters as given by (\ref{equ:random-walk}) after an application of the
``recoding trick'') or the general case (dependence
on the last $K$ letters as given by (\ref{equ:K-random-walk})), that is, no required properties are lost when
switching from the $K$-dependent case to the ``blocked letter language''.
It will turn out that the $K$-dependent case works completely analogously as the ``blocked
letter language'' case; however, the derived equations and formulas are much more
complex, so we restrict ourselves onto the case where the random walk is defined
as at the beginning of this section. In particular, there
is no additional gain in the techniques and proofs when investigating the
$K$-dependent case. For further comments, see Appendix \ref{sub:K-dependent}.
\end{Rem}
%The results below, however, are so
%general such that w.l.o.g. \mbox{-- for ease} and better readability -- we may
%assume that the regular language $\calL$ consists of the whole \mbox{set
%   $\calA^\ast$.} We will use the notation $w\in\calL$, $w\in\calA^\ast$
% respectively, to emphasize at some points that we explicitely mean a word of the language
% or just a word over the alphabet.
\par
We introduce some notation. The \textit{natural word length} of any $w\in\calA^\ast$ is denoted by
$|w|$. If $w\in\calA^\ast$ and
$k\in\mathbb{N}$ with $|w|\geq k$ then $w[k]$
denotes the $k$-th letter of $w$, and $[w]$ denotes the last two letters of $w$
when $w\neq o$ is not a single letter. % Moreover, we write
% $$
% \calL[k] := \bigl\lbrace w\in\calA^k \mid \exists w_0\in\calA^\ast:
% w_0w\in\calL\bigr\rbrace. (rauslassen falls \calL=\calA^\ast!!!)
% $$
\par
Malyshev \cite{malyshev} proved that the rate of escape w.r.t. the
natural word length exists for irreducible random walks on regular languages, that is, there is a non-negative constant $\ell$ such that
$$
\lim_{n\to\infty}\frac{|X_n|}{n} = \ell \quad \textrm{ almost surely.}
$$
Here, $\ell$ is called the \textit{rate of escape}. Furthermore, by \cite{malyshev}
follows that $\ell$ is strictly positive if and only if
$(X_n)_{n\in\N_0}$ is transient. In \cite{gilch:08} there are explicit formulas
for the rate of escape w.r.t. more general length functions. 
\par
Another characteristic number of random walks is the asymptotic entropy. Denote by $\pi_n$ the distribution of $X_n$. If there is a non-negative
constant $h$ such that the limit
$$
h=\lim_{n\to\infty} -\frac{1}{n}\mathbb{E}\bigl[ \log \pi_n(X_n)\bigr]
$$
exists, then $h$ is called the \textit{asymptotic entropy}. Since
we only have a partial composition law for concatenation of two words (if
$\mathcal{L}\subset \calA^\ast$) and since
we have no subadditivity and transitivity of the random walk, we are not able
to apply -- as in the case of
random walks on groups -- Kingman's subadditive ergodic theorem in order to show existence of $h$. 
It is, however, easy to see that the entropy equals zero if the random walk is
recurrent (see Corollary \ref{cor:recurrent-h-zero}). Therefore, from now on we
will only consider \textit{transient} random walks $(X_n)_{n\in\N_0}$.
\begin{Rem} 
Observe that $\lim_{n\to\infty} -\frac{1}{n}\log
  \pi_n(X_n)$ is not necessarily deterministic: take two homogeneous trees
of different degrees $d_1,d_2\geq 3$; identify
their root with one single root which becomes $o$ and consider the simple
random walk on this new inhomogeneous tree with starting point $o$. Obviously, this random walk can be
modelled as a random walk on a regular language. Then the limit $\lim_{n\to\infty} -\frac{1}{n}\log
  \pi_n(X_n)$ depends on the fact in which of the two subtrees the random walks
  goes to infinity. Hence, the sequence $-\frac{1}{n}\log
  \pi_n(X_n)$ converges with probability
  $d_1/(d_1+d_2)$ to $\log (d_1-1)$ and with probability
  $d_2/(d_1+d_2)$ to $\log (d_2-1)$; this can, e.g., be calculated by the
  formulas given in \cite{gilch:11}.
\end{Rem}
We have to make another assumption on the transition probabilities:
\begin{Ass}[Suffix-irreducibility]\label{ass:suffix}
We assume that the random walk on $\calL$ is \textit{suffix-irreducible},
that is, for all $w=w_0a_0b_0\in\calL$ with $w_0\in\calA^\ast, a_0b_0\in\calA^2$ and for all $ab\in\calA^2$ there is $n\in\N$ and $w_1\in\calA^\ast$ such that
$$
\Prob\Bigl[ X_n=w_0w_1ab, \forall k \leq n: |X_k|\geq |w|\, \Bigl|\, X_0=w\Bigr]>0.
$$
\end{Ass}
This assumption excludes degenerate cases and will guarantee existence of $\ell$;
compare with \cite[End of Section 2.1]{gilch:08}. We remark that famous previous papers about random
walks on regular languages (in particular, the basic ones of \cite{malyshev} and \cite{lalley})
require stronger assumptions than this non-degeneracy assumption. Later on it
will be clear that one can relax this condition in some way  without needing
additional techniques or ideas for the proofs. Hence, for purpose of ease and
better readability, we keep this assumption until further notice. We will give
further comments on this assumption in Appendix \ref{subsec:remarks-1}.
% We will see later (see Section TODO) that one may drop this assumption, but for ease of
% simplification and better readibility we assume it until further notice.
\par
The main idea behind our proofs will be the construction of an a priori fixed
sequence of cones (that is, special subsets of $\calL$), from which we extract a subsequence of nested cones which gives the
information how the random walk tends to infinity. 
This extraction will be done
via a hidden Markov chain $(\Y_k)_{k\in\N}$ with an underlying positive
recurrent Markov chain: the asymptotic entropy $H(\mathbf{Y})$ of the process $(\Y_k)_{k\in\N}$ is
 given by (\ref{equ:H(ZZ)}). The average distance between two
nested cones will be denoted by $\lambda$ which is given by (\ref{equ:lambda}):
if $X_{\e{k}}$ denotes the word (i.e., the vertex in $\mathcal{G}$) where the
$k$-th nested subcone is finally entered with no further exits of this
cone, then $\lambda=\mathbb{E}[|X_{\e{2}}|-|X_{\e{1}}|]$.
Our first main result concerns existence of the asymptotic entropy, which is
finally proven in Section \ref{sec:entropy}:
\begin{Th}\label{th:existence-entropy}
Consider a transient random walk $(X_n)_{n\in\N_0}$ on a regular language, which satisfies
Assumptions \ref{ass:weak} and \ref{ass:suffix}. Then the asymptotic entropy $h$ of $(X_n)_{n\in\N_0}$ exists and equals 
$$
h=\frac{\ell\cdot H(\mathbf{Y})}{\lambda},
$$
where $H(\mathbf{Y})$ is given by (\ref{equ:H(ZZ)}) and $\lambda$ by (\ref{equ:lambda}).
\end{Th}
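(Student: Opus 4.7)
The plan is to reduce existence of $h$ and its formula to a Shannon--McMillan-type identification for the hidden Markov chain $(\Y_k)_{k\in\N}$ that records how the random walk transitions through the nested cones. The picture is as follows: once the walk enters the $k$-th cone $C_k$ for the last time at the stopping time $\e{k}$, the tip $[X_{\e{k}}]$ determines the law of the subsequent excursion inside $C_k\setminus C_{k+1}$, while the prefix of $X_{\e{k}}$ freezes forever. This Markov renewal structure leads to a decomposition schematically of the form
\[
-\log \pi_n(X_n) \;=\; \sum_{j=1}^{K_n} \bigl(-\log \P[\Y_j \mid \Y_{j-1},[X_{\e{j-1}}]]\bigr) + R_n,
\]
where $K_n$ counts the nested cones entered definitively before time $n$ and $R_n$ is a boundary remainder coming from the in-progress excursion at time $n$.

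The first step is to construct the nested cone sequence $C_1\supset C_2\supset\cdots$ together with the times $\e{k}$. Weak symmetry (Assumption \ref{ass:weak}) makes the undirected neighbourhood structure on $\mathcal{G}$ on which cones are cut off meaningful, and suffix-irreducibility (Assumption \ref{ass:suffix}) forces the walk to revisit every possible suffix configuration, producing an infinite nested chain almost surely; transience ensures $\e{k}<\infty$. Applying the strong Markov property at $\e{k}$ together with the fact that the transition probabilities depend only on the last two letters exhibits $\bigl([X_{\e{k}}]\bigr)_{k\in\N}$ as an irreducible positive recurrent Markov chain and $(\Y_k)_{k\in\N}$ as a hidden Markov chain driven by it.

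Next I combine two limit theorems. The renewal/ergodic theorem applied to the cone-increments $|X_{\e{k+1}}|-|X_{\e{k}}|$ yields $|X_{\e{k}}|/k \to \lambda$ a.s., and together with Malyshev's $|X_n|/n\to\ell$ this gives $K_n/n\to\ell/\lambda$ a.s. A Shannon--McMillan statement for hidden Markov chains, for which a version is available from the techniques of Han--Marcus \cite{han-marcus06} used later in Section \ref{sec:entropy-analyticity}, delivers
\[
-\tfrac{1}{k}\log \P[\Y_1,\dots,\Y_k] \;\longrightarrow\; H(\mathbf{Y}) \quad\text{in }L^1 \text{ and in probability.}
\]
Plugging these two convergences into the cone decomposition, after verifying that $R_n/n\to 0$ in $L^1$ by a uniform integrability argument based on transience, yields $-\frac{1}{n}\E[\log \pi_n(X_n)] \to \ell\,H(\mathbf{Y})/\lambda$, giving both the existence of $h$ and the announced formula.

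The main obstacle is the factorisation of $\pi_n(X_n)$ through the cone structure. Since $\calL$ may be a strict subset of $\calA^\ast$ and there is no global composition law, $\P[X_n=w]$ does not split cleanly as a product of excursion probabilities; one has to show that, conditional on the cone sequence up to $\e{K_n}$, the prefix of $X_n$ has been frozen so that $\pi_n(X_n)$ is proportional, up to a bounded factor depending only on $[X_{\e{K_n}}]$ and the position inside the current cone, to the product over cone-steps appearing in the display above. Controlling this factor and the remainder $R_n$ uniformly in $n$, and passing from in-probability to $L^1$ convergence in spite of the non-deterministic pointwise limit highlighted in the Remark, is the technical heart of the argument and will occupy Sections \ref{sec:exit-times} and \ref{sec:entropy}.
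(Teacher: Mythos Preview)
Your proposal has the right skeleton---the renewal structure through nested cones, the identification $K_n/n\to\ell/\lambda$, and the Shannon--McMillan limit for the hidden chain $(\Y_k)$---but the central step, the factorisation of $\pi_n(X_n)$ through the cone sequence, does not go through as you describe it. You claim that $\pi_n(X_n)$ is, up to a bounded factor depending only on the current suffix, proportional to the product $\prod_j \P[\Y_j\mid\cdots]$. That product is (essentially) $L(o,X_n|1)$, the last-visit generating function, which \emph{does} factor through the cones; but $\pi_n(X_n)=p^{(n)}(o,X_n)$ carries the hard constraint ``exactly $n$ steps'', which the last-entry decomposition does not respect. If such a bounded-factor comparison held, $-\tfrac1n\log\pi_n(X_n)$ would converge almost surely, whereas the paper explicitly leaves a.s.\ convergence open and only obtains it for $\liminf$; this is a sign that your claimed comparison is too strong.

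The paper's route avoids factorising $\pi_n$ altogether. It first proves the a.s.\ limit $-\tfrac1n\log L(o,X_n|1)\to\ell H(\Y)/\lambda$ (Proposition~\ref{prop:L-limit}), which is where the cone/renewal structure is actually used. The passage to $\pi_n$ is then done by two asymmetric arguments: the trivial inequality $G(o,X_n|1)\geq\pi_n(X_n)$ gives one side via Fatou, while the other side comes from Shannon's inequality (non-negativity of Kullback--Leibler) applied with the auxiliary probability $\mu_0(w)=\tfrac{1}{n^2+1}\sum_{m=0}^{n^2}p^{(m)}(o,w)$. For this truncation one needs the Green function to have radius of convergence $R>1$ (Lemma~\ref{lemma:R>1}), which is used in Lemma~\ref{lemma:prob-bound} to kill the tail $\sum_{m>n^2}p^{(m)}(o,X_n)$ super-geometrically. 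Your ``uniform integrability argument based on transience'' is not a substitute for this spectral-gap input, and your remainder $R_n$ cannot be shown to be $o(n)$ in $L^1$ without it. The missing ingredients, concretely, are: (i) work with $L(o,X_n|1)$ rather than $\pi_n(X_n)$ for the cone decomposition; (ii) use $R>1$ to truncate the Green function; (iii) use Shannon's inequality, not a pointwise comparison, to close the gap between the truncated Green sum and $\pi_n$.
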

Recall that the random walk is described by the values in (\ref{equ:random-walk}).
A natural question is whether the entropy varies regularly if the parameters in
(\ref{equ:random-walk}) are varied slightly and if positive transition
probabilities remain positive by this variation. The following result gives
an answer to this question, where the proof is given in Section \ref{sec:entropy-analyticity}:
%The question whether the entropy varies regularly in terms of probability
%measures of constant support is answered by the following result:
\begin{Th}\label{thm:entropy-continuity}
For transient random walks on regular languages satisfying Assumptions \ref{ass:weak} and \ref{ass:suffix}, the entropy $h$ varies real-analytically under all probability measures of constant support.
\end{Th}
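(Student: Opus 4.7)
The plan is to exploit the formula $h = \ell \cdot H(\mathbf{Y})/\lambda$ provided by Theorem \ref{th:existence-entropy} and to prove real-analyticity of each of the three factors separately, viewing them all as functions of the probability vector $\mathbf{p}=(p(x,y))$ varied on the (open) simplex of probability measures having a prescribed support. Since quotients and products of nonzero real-analytic functions are real-analytic, real-analyticity of $\ell$, $H(\mathbf{Y})$ and $\lambda$ (with $\lambda$ bounded away from $0$) will then yield the theorem. Throughout, I would work on a small complex neighbourhood of the admissible $\mathbf{p}$, treating the transition probabilities as complex parameters and invoking the identity theorem once analyticity is established on the real parameter domain.

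For $\ell$ and $\lambda$, I would build on the generating-function machinery set up in Section \ref{sec:genfun} (and used also in \cite{gilch:08} and \cite{gilch:11}). The first-visit/first-exit generating functions associated with the cones constructed in Section \ref{sec:cones} satisfy a finite polynomial system of equations with coefficients polynomial in $\mathbf{p}$. Since $(X_n)$ is transient, evaluating at $z=1$ gives values strictly less than $1$ at which the Jacobian of the system is invertible (a standard spectral-radius argument via Assumptions \ref{ass:weak} and \ref{ass:suffix} and the positive recurrence of the underlying cone process). The analytic implicit function theorem then yields real-analyticity of these first-visit probabilities in $\mathbf{p}$. The rate of escape $\ell$, the average cone-distance $\lambda$ defined in (\ref{equ:lambda}) and the invariant distribution of the underlying positive recurrent cone Markov chain are all rational expressions in these generating function values, hence inherit real-analyticity.

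For $H(\mathbf{Y})$, the strategy outlined in the introduction is to realize $(\mathbf{Y}_k)$ as a hidden Markov chain driven by a positive recurrent Markov chain — the chain tracking the sequence of nested cones entered by $(X_n)$ — and then invoke the analyticity result of Han and Marcus \cite{han-marcus06}. The transition matrix of this underlying chain and the emission probabilities of the observation map are, by the previous paragraph, real-analytic functions of $\mathbf{p}$; moreover they remain strictly positive throughout the constant-support domain. Since \cite{han-marcus06} states that the entropy rate of a hidden Markov chain depends real-analytically on the parameters of the underlying chain and on the (strictly positive) channel parameters, one concludes real-analyticity of $H(\mathbf{Y})$.

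The main obstacle will be verifying the hypotheses of \cite{han-marcus06} for the present hidden Markov chain. Concretely, I would need to (i) identify explicitly the state space, transition matrix and observation function of the hidden Markov chain, (ii) prove that the underlying chain is primitive (irreducible and aperiodic) on the open parameter set, and (iii) check the positivity condition on the emission kernel. The cone construction from Section \ref{sec:cones}, together with suffix-irreducibility (Assumption \ref{ass:suffix}) and weak symmetry (Assumption \ref{ass:weak}), should supply these properties: suffix-irreducibility ensures that every cone type is reachable from every other with positive probability in the nested-cone chain, while weak symmetry rules out periodicity artifacts. A secondary technical point is to ensure that $\lambda>0$ on the parameter domain, which follows from transience plus the strict positivity of $\ell$ established via the techniques of \cite{gilch:08}.
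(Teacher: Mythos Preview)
Your overall strategy matches the paper's: factor $h=\ell\cdot H(\mathbf{Y})/\lambda$ and prove each factor real-analytic, invoking Han--Marcus \cite{han-marcus06} for $H(\mathbf{Y})$. The treatment of $\ell$ and $\lambda$ is essentially correct (the paper uses a slightly different route for analyticity of $q(\cdot,\cdot)$ --- namely that the generating functions $H(ab,c|z)$, $\bar L(ab,cde|z)$ have radius of convergence strictly bigger than $1$ by Lemma~\ref{lemma:R>1}, so that $\underline{p}$ lies in the interior of the domain of convergence when they are viewed as multivariate power series in the transition probabilities --- rather than the implicit function theorem; but either works).

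There is, however, a genuine gap in your handling of $H(\mathbf{Y})$. You assert that the transition matrix of the underlying chain ``remains strictly positive throughout the constant-support domain'', and you list primitivity as what needs to be checked. Neither is sufficient and the first is false in general: the transition probabilities $q(w_1,w_2)$ of $(\mathbf{W}_k)_{k\in\mathbb{N}}$ vanish whenever $w_2\notin\mathcal{S}(w_1)$, and the induced transitions of the paired chain $\bigl((\mathbf{i}_k,\mathbf{W}_k),(\mathbf{i}_{k+1},\mathbf{W}_{k+1})\bigr)_{k\in\mathbb{N}}$ used to define $\mathbf{Y}_k$ have structural zeros coming from the cone combinatorics (for instance, $(\mathbf{i}_{k+1},\mathbf{W}_{k+1})=(j_{m,n},y)$ forces $m$ to equal the type of $C(\mathbf{W}_k)$). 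Primitivity does hold, but \cite[Theorem~1.1]{han-marcus06} requires a strictly positive transition matrix, not merely primitivity, so it does not apply to this chain directly.

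The paper closes this gap by a nontrivial construction (Section~\ref{sec:entropy-analyticity}.1): it introduces a \emph{modified} Markov chain $(\mathbf{t}_k,\mathbf{x}_k)_{k\in\mathbb{N}}$ on the same state space $\mathcal{W}$, governed by a new matrix $\widehat{Q}$ with \emph{all} entries strictly positive, such that the projected process $\mathbf{Z}_k:=\pi\bigl((\mathbf{t}_k,\mathbf{x}_k),(\mathbf{t}_{k+1},\mathbf{x}_{k+1})\bigr)$ has exactly the same finite-dimensional distributions as $(\mathbf{Y}_k)_{k\in\mathbb{N}}$ (Proposition~\ref{prop:Y-hatY-equation}). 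The trick is to split the mass of each transition into the first subcone of a given type uniformly over ``phantom'' states $(j_{m,n},y)$ with $m\neq i$, which are then collapsed back by the second branch of the projection $\pi$ in (\ref{equ:pi-def}). Only after this repackaging can Han--Marcus be invoked to conclude real-analyticity of $H(\mathbf{Y})$. Your proposal is missing precisely this step.
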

Moreover, we can also describe the asymptotic entropy in the following way:
\begin{Cor}\label{cor:entropy-convergence}
We have the following types of convergence:
\begin{enumerate}
\item For almost every trajectory of the random walk $(X_n)_{n\in\N_0}$,
$$
h=\liminf_{n\to\infty} -\frac{1}{n} \log \pi_n(X_n).
$$
\item Convergence in probability:
$$
-\frac{1}{n}\log \pi_n(X_n) \xrightarrow{\Prob} h.
$$ 
\item Convergence in $L_1$:
$$
-\frac{1}{n}\log \pi_n(X_n) \xrightarrow{L_1} h.
$$
\end{enumerate}
\end{Cor}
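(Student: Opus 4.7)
The plan is to derive all three convergence statements from Theorem~\ref{th:existence-entropy} (convergence in mean) together with an almost-sure lower bound obtained by comparing $-\log\pi_n(X_n)$ to a length function on $\mathcal{G}$ built from Green functions. All the hard analytic work sits in establishing this lower bound; once it is in place, the three assertions will follow from routine integration arguments.

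For the setup, introduce the Green function $G(w_1,w_2)=\sum_{n\ge 0}p^{(n)}(w_1,w_2)$ and the first-passage probabilities $F(w_1,w_2)=\Prob_{w_1}[\exists n\ge 0\colon X_n=w_2]$, and define the length function $d_G(w):=-\log F(o,w)$. Under Assumption~\ref{ass:suffix} the diagonal values $G(v,v)$ depend only on the suffix $[v]$ for words $v$ of sufficient length, so only finitely many values are assumed; in particular $G_{\max}:=\sup_{v\in\calL}G(v,v)<\infty$. The factorisation $G(o,w)=F(o,w)\,G(w,w)$ combined with $\pi_n(w)=p^{(n)}(o,w)\le G(o,w)$ yields the pointwise estimate
\[
-\log\pi_n(X_n)\;\ge\; d_G(X_n)-\log G_{\max}.
\]

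The main step is to prove that $\tfrac{1}{n}d_G(X_n)\to h$ almost surely. This uses the cone decomposition of Sections~\ref{sec:cones}--\ref{sec:entropy} in exactly the same way as in the proof of Theorem~\ref{th:existence-entropy}: one writes $F(o,X_n)$ as a telescoping product of first-passage probabilities between consecutive cone-entry points $X_{\e{k}}$, turning $d_G(X_n)$ into a sum of increments governed by the positive-recurrent hidden chain $(\Y_k)$. Applying the ergodic theorem to that chain identifies the almost-sure limit of $\tfrac{1}{n}d_G(X_n)$ as the very same ratio $\ell\,H(\Y)/\lambda=h$ that was obtained for the entropy in Theorem~\ref{th:existence-entropy}. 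Combined with the pointwise bound this gives
\[
\liminf_{n\to\infty}\Bigl(-\tfrac{1}{n}\log\pi_n(X_n)\Bigr)\;\ge\; h\qquad\text{almost surely.}
\]

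Given this lower bound, the three claims follow by soft arguments. For (i), Fatou's lemma applied to the non-negative variables $-\tfrac{1}{n}\log\pi_n(X_n)$ gives
\[
\mathbb{E}\bigl[\liminf_n(-\tfrac{1}{n}\log\pi_n(X_n))\bigr]\;\le\;\liminf_n\mathbb{E}[-\tfrac{1}{n}\log\pi_n(X_n)]\;=\;h,
\]
which together with the almost-sure lower bound forces the $\liminf$ to equal $h$ almost surely. For (iii), the negative part $(-\tfrac{1}{n}\log\pi_n(X_n)-h)^-$ is dominated by the constant $h$ and converges to $0$ almost surely by what has just been shown, hence converges to $0$ in $L_1$ by dominated convergence; using $\mathbb{E}[-\tfrac{1}{n}\log\pi_n(X_n)]\to h$ from Theorem~\ref{th:existence-entropy}, the positive part then also vanishes in $L_1$, which yields (iii) and in turn implies (ii).

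The main obstacle is the identification of the limit in the second step, namely showing that the multiplicative decomposition of $F(o,X_n)$ along successive nested cones produces an ergodic increment sequence whose mean is exactly $\ell\,H(\Y)/\lambda$. Controlling the boundary contributions between cone entries, and verifying that the pieces of the trajectory spent inside a single cone contribute asymptotically negligibly to $d_G(X_n)/n$, is where weak symmetry and suffix-irreducibility together with the machinery of the hidden chain $(\Y_k)$ are indispensable.
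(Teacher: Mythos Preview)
Your strategy is correct and matches the paper's approach: the almost-sure lower bound $\liminf_n(-\tfrac1n\log\pi_n(X_n))\ge h$ is obtained by comparison with the Green/last-visit function, and then Fatou plus Theorem~\ref{th:existence-entropy} upgrades this to (i); parts (ii) and (iii) follow from the negative/positive-part splitting exactly as you describe. Two small points to correct. First, your justification of $G_{\max}<\infty$ is off: it is \emph{not} true that $G(v,v)$ depends only on $[v]$, since return paths to $v$ may pass through arbitrarily short words. The paper instead uses the uniform bound $G(v,v)\le (1-1/R)^{-1}$, where $R>1$ is the common radius of convergence of the Green functions (Lemma~\ref{lemma:R>1}); this is what gives $G_{\max}<\infty$. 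Second, the product decomposition you invoke is not a telescoping of \emph{first}-passage probabilities $F$ but rather a decomposition of $L(o,\cdot)$ along \emph{last} exits from the nested cones (Proposition~\ref{prop:L-limit}); the transfer from $L$ to $F$ (and hence to $d_G$) is then a one-line consequence of $F(o,w)=\frac{G(o,o)}{G(w,w)}L(o,w)$ and the uniform bound on $G(w,w)$. With these adjustments your outline is the same as the paper's, which records the argument by reference to \cite[Corollary~3.9, Lemma~3.10]{gilch:11}.
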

The \textit{Greenian distance} between two words $w_1,w_2\in\calL$ is defined as 
$$
d_{\mathrm{Green}}(w_1,w_2):=-\log \Prob[\exists n\in\N_0: X_n=w_2 \mid X_0=w_1].
$$
Analogously to the situation for random walks on groups, we get the following
result, which is finally proven at the end of Section \ref{sec:entropy}:
\begin{Cor}\label{cor:green-distance}
The entropy is the rate of escape with respect to the Greenian distance, that
is,
$$
h=\lim_{n\to\infty} -\frac{1}{n} d_{\mathrm{Green}}(o,X_n) \quad \textrm{almost surely.}
$$
\end{Cor}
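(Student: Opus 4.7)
The plan is to compare $d_{\mathrm{Green}}(o,X_n)$ with $-\log\pi_n(X_n)$ via the first-passage identity for the Green function, and then exploit the cone decomposition of Section~\ref{sec:cones} to promote the almost-sure liminf from Corollary~\ref{cor:entropy-convergence}(1) to a genuine almost-sure limit. Set $F(u,w):=\Prob[\exists n\in\N_0:X_n=w\mid X_0=u]$ and $G(u,w):=\sum_{n\geq 0}p^{(n)}(u,w)$. The standard first-entrance identity $G(u,w)=F(u,w)\cdot G(w,w)$, combined with $\pi_n(X_n)=p^{(n)}(o,X_n)\leq F(o,X_n)$ and $\pi_n(X_n)\leq G(o,X_n)$, yields the sandwich
\begin{equation*}
d_{\mathrm{Green}}(o,X_n)\;\leq\;-\log\pi_n(X_n)\;\leq\;d_{\mathrm{Green}}(o,X_n)+\log G(X_n,X_n).
\end{equation*}

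Because transitions depend only on the suffix $[w]$, Assumptions~\ref{ass:weak} and~\ref{ass:suffix} together with transience force the first-return probability at $w$ to take only finitely many values (parametrised by the type of the cone rooted at $w$), each strictly less than $1$. Hence $G(w,w)\leq C$ for some constant $C$ independent of $w$, and dividing the sandwich by $n$ gives
\begin{equation*}
\left|\,-\tfrac{1}{n}d_{\mathrm{Green}}(o,X_n)-\bigl(-\tfrac{1}{n}\log\pi_n(X_n)\bigr)\right|\;\leq\;\tfrac{\log C}{n}\;\longrightarrow\;0.
\end{equation*}
Parts (2)--(3) of Corollary~\ref{cor:entropy-convergence} then transfer the convergence in probability and in $L^1$ of $-\tfrac{1}{n}\log\pi_n(X_n)$ to $\tfrac{1}{n}d_{\mathrm{Green}}(o,X_n)$, while part (1) yields $\liminf_{n\to\infty}-\tfrac{1}{n}d_{\mathrm{Green}}(o,X_n)=h$ almost surely.

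The main obstacle is to upgrade this liminf to an honest almost-sure limit --- precisely the step left open for $-\tfrac{1}{n}\log\pi_n(X_n)$ itself. The decisive advantage of $d_{\mathrm{Green}}$ is that $F(o,\cdot)$ depends only on the endpoint. In the nested-cone construction of Section~\ref{sec:cones}, each cone $\calC_k$ has a unique entry vertex, which is exactly $X_{\e{k}}$; hence every trajectory from $o$ to a point in $\calC_k$ is forced to pass through $X_{\e{1}},X_{\e{2}},\ldots,X_{\e{k}}$ in order, and the strong Markov property produces the \emph{exact} factorisation
$$
-\log F(o,X_{\e{k}})\;=\;\sum_{j=1}^{k}\bigl(-\log F(X_{\e{j-1}},X_{\e{j}})\bigr),\qquad X_{\e{0}}:=o.
$$
Viewed along the hidden Markov chain $(\Y_j)_{j\in\N}$, the summands form a stationary ergodic sequence with stationary mean $H(\mathbf{Y})$, so Birkhoff's theorem yields $-\tfrac{1}{k}\log F(o,X_{\e{k}})\to H(\mathbf{Y})$ almost surely. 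Combining this with the almost-sure convergence $\e{k}/k\to\lambda/\ell$ (an ingredient in the proof of Theorem~\ref{th:existence-entropy}) gives $-\tfrac{1}{\e{k}}\log F(o,X_{\e{k}})\to \ell\,H(\mathbf{Y})/\lambda=h$; a crude one-step estimate $d_{\mathrm{Green}}(o,X_{n+1})\leq d_{\mathrm{Green}}(o,X_n)+|\log p_{\min}|$ interpolates between consecutive entry times to complete the proof.
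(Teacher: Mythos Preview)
Your detour through $-\log\pi_n(X_n)$ forces you to manufacture almost-sure convergence by hand, and the factorisation you invoke in the final paragraph is false. Cones in this paper do \emph{not} have unique entry vertices: the boundary $\partial C(w)$ is defined in Section~\ref{subsec:cone-def} as a set of words of length $|w|$, and the whole point of introducing $\hat l$ in Proposition~\ref{prop:hat-l-convergence} (and then comparing it with $l$ in Corollary~\ref{cor:l-limit}) is precisely that a path from $o$ into $C(X_{\e{k}})$ may enter through any point of $\partial C(X_{\e{k}})$, not necessarily through $X_{\e{k}}$ itself. Hence the identity $F(o,X_{\e{k}})=\prod_{j}F(X_{\e{j-1}},X_{\e{j}})$ fails, and with it your Birkhoff argument. (A smaller issue: the first-return probability at $w$ is not a function of $[w]$ alone, so your justification for the uniform bound on $G(w,w)$ is wrong; the paper obtains this bound differently, via $U(w,w|z)\leq z/R$ for $z<R$ in the proof of Lemma~\ref{lemma:sum-bounds}.)

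The paper avoids all of this by comparing $d_{\mathrm{Green}}(o,X_n)=-\log F(o,X_n)$ not with $-\log\pi_n(X_n)$ but with $l(X_n)=-\log L(o,X_n|1)$, whose almost-sure convergence to $h$ is already the content of Proposition~\ref{prop:L-limit}. From $G(o,w|1)=F(o,w)G(w,w|1)$ and $G(o,w|1)=G(o,o|1)L(o,w|1)$ one gets $F(o,X_n)=\dfrac{G(o,o|1)}{G(X_n,X_n|1)}\,L(o,X_n|1)$, and since $1\leq G(X_n,X_n|1)\leq (1-1/R)^{-1}$ uniformly, $-\tfrac{1}{n}\log F(o,X_n)$ and $\tfrac{1}{n}l(X_n)$ differ by $O(1/n)$. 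That is the entire proof.
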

Further results are given in Section \ref{sec:entropy-calculation}, where we
show that $h>0$ (Corollary \ref{cor:h-zero}) for non-degenerate transient random walks, give an inequality between entropy, drift and growth (Theorem \ref{prop:h-inequality}) and give an exact formula in some special case (Theorem \ref{thm:unambiguous}).

\subsection{Examples}

We give two classical examples for regular languages.

\subsubsection{Stacks}

In computer science theory \textit{stacks} play an important role for modelling
algorithms. Stacks are
data structures and are also called ``Last In--First Out'' queues.
They can be considered as
lists, where elements may be adjoined or removed only at the end of the
list. Adjoining and removing elements is done randomly (assuming \ref{ass:weak} and \ref{ass:suffix}). For instance, the
single letters may represent different kinds of main and subprocedures, which may be called
recursively. When a subprocedure has finished the associated letter may be
deleted. When some main procedure has finally terminated (including all
subprocedures and their recursive calls) then a single letter may be
added which won't be removed any more. In terms of transient random walks this
means that prefixes of arbitrary lengths remain constant for the rest of the random process if
the associated procedures are terminated; the tail of the current word
represents some open subprocedures still to be done. The asymptotic entropy gives
then the average additional amount of information which one gets with each
terminated main procedure:
each very likely called procedure reduces entropy and each unlikely called
procedure increases the entropy. The overall average gives the asymptotic entropy.

\subsubsection{Virtually Free Groups}
\label{subsub:vfg}

Virtually free groups are groups which contain a free group as a subgroup of
finite index. This kind of groups can be implemented as regular languages: let
$\Gamma$ be a virtually free group which contains the free group $\mathbb{F}_d$
with $d$ generators as a subgroup of index $[\Gamma: \mathbb{F}_d]=k$. Let
$\mathbb{F}_d$ be generated by the elements $a_1,a_1^{-1},\dots,a_d,a_d^{-1}$,
and let $h_1,\dots,h_k$ be representants of the $k$ different left co-sets of
$\Gamma$. That is, each element $x\in\Gamma$ can be written as
\begin{equation}\label{equ:vfg}
x_1x_2\dots x_m h_j,
\end{equation}
where $m\in\N_0$, $j\in\{1,\dots,k\}$ and $x_1,\dots,x_m\in\{a_1,a_1^{-1},\dots,a_d,a_d^{-1}\}$
such that $x_i^{-1}\neq x_{i+1}$ for all $i\in\{1,\dots,m-1\}$.
\par
Let $\mu$ be a finitely supported probability measure on $\Gamma$ and let
$(\xi_n)_{n\in\mathbb{N}}$ be an i.i.d. sequence of $\Gamma$-valued random
variables with distribution $\mu$. 
A random walk $(X_n)_{n\in\mathbb{N}_0}$ on $\Gamma$ is then given by 
$$
X_{n+1}=X_n\xi_{n+1}
$$
with $X_0$ being the neutral element of $\Gamma$. In other words,
the single-step transition probabilities are given by
$p(x,y):=\mu(x^{-1}y)$ for $x,y\in\Gamma$. After each step one calculates a
reduced expression as in (\ref{equ:vfg}) for the current word; hence, only the last $K$ letters of
the current word may be changed, where $K$ has to be chosen sufficiently large
in dependence on the largest word of $\mathrm{supp}(\mu)$. Compare also with the detailed example of free products
by amalgamation in \cite[Section 3.1]{gilch:08}.

\section{Generating Functions}
\label{sec:genfun}
For $w_1,w_2\in\calA^\ast$, $z\in\mathbb{C}$, the \textit{Green function} is defined as 
$$
G(w_1,w_2|z) := \sum_{n\geq 0} p^{(n)}(w_1,w_2)\cdot z^n
$$
and the \textit{last visit generating function} as
$$
L(w_1,w_2|z) := \sum_{n\geq 0}\Prob\bigl[X_n=w_2,\forall m\in\{1,\dots,n\}:
X_m\neq w_1 \bigl|X_0=w_1\bigr]\cdot z^n.
$$
By conditioning on the last visit to $w_1$, an important relation
between these functions is given by
\begin{equation}\label{equ:G-L}
G(w_1,w_2|z) = G(w_1,w_1|z) \cdot L(w_1,w_2|z).
\end{equation}
In the following we introduce further generating functions, which also have
 been used analogously in \cite{gilch:08}.
Define for $a,b,c,d,e\in \calA$ and real $z>0$ 
$$
H(ab,c|z)  :=    \sum_{n\geq 1} \Prob\bigl[X_n=c,\forall m<n:
|X_m|>1 \bigl| X_0=ab\bigr]\cdot z^n
$$
and
\begin{eqnarray*}
\bar L(ab,cde|z) &  :=&   \sum_{n\geq 1} \Prob\bigl[X_n=cde,|X_{n-1}|=2,\forall m\in\{1,\dots,
n\}: |X_m|\geq 2, \bigl| X_0=ab\bigr]\cdot z^n,\\
\overline{G}(ab,cd|z) &:= & \sum_{n\geq 0} \Prob\bigl[X_n=cd,\forall m\in\{1,\dots,
 n\}: |X_m|\geq 2 \bigl| X_0=ab\bigr]\cdot z^n.
\end{eqnarray*}
We write $\bar L(ab,cde):=\bar L(ab,cde|1)$.
These generating functions can be computed in two steps: first, one solves the following
system of equations which arises by case distinction on the first step:
\begin{eqnarray}\label{h-equations}
H(ab,c|z) &=& p(ab,c)\cdot z +  \sum_{de\in
  \calA^2} p(ab,de)\cdot z\cdot H(de,c|z)\nonumber\\
 && \ + \sum_{def\in \calA^3} p(ab,def)\cdot z \cdot \sum_{g\in \calA} H(ef,g|z)\cdot H(dg,c|z);
\end{eqnarray}
compare with \cite{lalley} and \cite{gilch:08}. 
 The system (\ref{h-equations}) consists of equations of quadratic order, and
 therefore the functions $H(\cdot,\cdot|z)$ are algebraic, if the transition
 probabilities are algebraic. 
We now get the functions $\overline{G}(ab,cd|z)$ by solving the following
linear system of equations which also arises by case distinction on the first step:
 \begin{eqnarray*}
 \overline{G}(ab,cd|z) &=& \delta_{ab}(cd) + \sum_{c_1d_1\in \calA^2}
 p(ab,c_1d_1) \cdot z\cdot \overline{G}(c_1d_1,cd|z) +\\
 && \ + \sum_{c_1d_1e_1\in \calA^3} p(ab,c_1d_1e_1)\cdot
 z\cdot \sum_{f\in \calA} H(d_1e_1,f|z)\cdot \overline{G}(c_1f,cd|z).
 \end{eqnarray*}
Finally, we get 
\begin{equation}\label{equ:L-formel}
\bar L(ab,cde|z) =  \sum_{a_1b_1\in\calA^2} \overline{G}(ab,a_1b_1|z)\cdot z
\cdot p(a_1b_1,cde).
\end{equation}
Obviously, it is sufficient to consider only those functions $H(ab,\cdot|z)$,
$\overline{G}(ab,\cdot|z)$ and $L(ab,\cdot|z)$ such that there exists some $w_0\in\calA^\ast$ with
$w_0ab\in\calL$; the remaining functions do not play a role for our random walk. 
% We remark that we implicitly took into account the assumption
% $\calL=\calA^\ast$; if $\calL \subset \calA^\ast$ one has to restrict these
% definitions and systems of equations to the terms which may occur.
% in
% order to construct the following square matrix: for $a,d\in\calA$, we write
% $$
% \mathds{L}(a,d) =\bigl( L(abc,def)\bigr)_{bc,ef\in\calA^2}.
% $$
Moreover, one can compute the Green functions of the form $G(o,w|z)$,
$w\in\calL$ with $|w|\leq 3$, by
 solving
 \begin{eqnarray*}
 G(w_1,w_2|z) &=& \delta_{w_1}(w_2) + \sum_{w_3\in \calA^\ast:|w_3|\leq 3}
 p(w_1,w_3) \cdot z\cdot G(w_3,w_2|z) +\\
 && \ + \mathds{1}_{3}(w_1)\cdot \sum_{cde\in \calA^3} p(w_1[2]w_1[3],cde)\cdot
 z\cdot \sum_{f\in \calA} H(de,f|z)\cdot G(w_1[1]cf,w_2|z),\nonumber
 \end{eqnarray*}
 where $w_1,w_2\in\calA^\ast$ with $|w_1|,|w_2|\leq 3$ and $\mathds{1}_{3}(w_1):=1$,
 if $|w_1|=3$, and $\mathds{1}_{3}(w_1):=0$ otherwise. 
\par
We also define for $ab\in\calA^2$:
$$
\xi(ab) := \Prob\bigl[\forall n\geq 0: |X_n|\geq 2 \,\bigl|\, X_0=ab\bigr]=1-\sum_{f\in\calA} H(ab,f|1).
$$
When starting at a word $wab\in\calL$, where
$w\in\calA^\ast$, $\xi(ab)$ is the probability that the process
$(X_n)_{n\in\N_0}$ will not visit any words of length
$|wab|-1$ or smaller. In this case the prefix $w$ will remain constant for the rest of
the process. Observe that, for transient random walks, $\xi(ab)>0$ for all $ab\in\calA^2$ due to
Assumption \ref{ass:suffix}.
We define a ``length function'' on $\calL$ by
\begin{equation}\label{def:l-definition}
l(w) := -\log L(o,w|1)\quad \textrm{ for } w\in\calL.
\end{equation}
For $n\geq 2$ and $a_1,\dots,a_n\in\calA$, the functions $L(o,a_1\dots a_n|z)$ can be rewritten as 
\begin{equation}
\sum_{b,b_0,c_0\in\calA} L(o,b|z)\cdot z\cdot p(b,b_0c_0)
\sum_{\substack{b_1,\dots,b_{n-2}\in\calA,\\
c_1,\dots,c_{n-2}\in\calA}} 
\prod_{i=1}^{n-2} \bar L(b_{i-1}c_{i-1},a_ib_ic_i|z)\cdot
\overline{G}(b_{n-2}c_{n-2},a_{n-1}a_n|z);\label{equ:L-expansion}
\end{equation}
each path from $o$ to $a_1\dots a_n$ is decomposed to the last times when the
sets $\calA,\calA^2,\dots,\calA^{n-1}$ are visited, that is, the factor $\bar L(b_{i-1}c_{i-1},a_ib_ic_i|z)$
corresponds to the parts of the paths from $o$ to $a_1\dots a_n$ between the
final exits of the sets $\calA^{i}$ and $\calA^{i+1}$.

\section{Cones}
\label{sec:cones}

\subsection{Definitions of Cones and Properties}\label{subsec:cone-def}

In this section we introduce the structure of cones in our setting.
A \textit{path} in $\calA^\ast$  is a sequence of words $\langle
w_0,w_1,\dots,w_m\rangle$, $m\in\mathbb{N}$, in $\calA^\ast$ such
that $\mathbb{P}_{w_{i-1}}[X_1=w_{i}]>0$ for all $1\leq i\leq m$. By weak
symmetry, we have that, for each such path, the reversed sequence of words
$\langle
w_m,w_{m-1},\dots,w_0\rangle$ is also a path.
For $n\in\N$, define $\calA^\ast_{\geq n}:=\bigl\lbrace w\in\calA^\ast \bigl| |w|\geq n\bigr\rbrace$.
For any $w_0\in\calA^\ast_{\geq 2}$, we define the
\textit{cone} rooted at $w_0$ as
$$
C(w_0):=\left\lbrace w\in\calA^\ast_{\geq |w_0|} \Biggl|
\begin{array}{c}
\exists m\in\N_0\ \exists \textrm{ path }
\langle w_0,w_1,\dots,w_{m-1}, w\rangle \\
\textrm{ with }  w_1,\dots,w_{m-1}\in \calA^\ast_{\geq |w_0|}
\end{array}\right\rbrace. 
$$
In other words, when we consider the associated graph $\mathcal{G}$ then the
cone $C(w_0)$ can be viewed as the subgraph of $\mathcal{G}$ which is the connected component
containing $w_0$ after removing all vertices $w'\in\mathcal{A}\setminus
\calA^\ast_{\geq |w_0|}$ and the adjacent edges to these $w'$. In particular, we have $w_0\in C(w_0)$.
If $w_1\in C(w_0)$
then we have $C(w_1)\subseteq C(w_0)$: indeed, let be $w_2\in C(w_1)$; therefore, $|w_2|\geq |w_1|\geq |w_0|$
and there are paths $\langle w_0,w_1',\dots,w_k',w_1\rangle$ through words
$w_1',\dots,w_k'\in\calA^\ast_{\geq |w_0|}$ and $\langle
w_1,w_1'',\dots,w_l'',w_2\rangle$ through words
$w_1'',\dots,w_l''\in\calA^\ast_{\geq |w_1|}\subseteq \calA^\ast_{\geq
  |w_0|}$. Hence, there is a path $\langle
w_0,w_1',\dots,w_k',w_1,w_1'',\dots,w_l'',w_2\rangle$ through words in
$\calA^\ast_{\geq |w_0|}$, that is, $w_2\in C(w_0)$ yielding $C(w_1)\subseteq
C(w_0)$. The cone $C(w_1)$ is then called a \textit{subcone} of $C(w_0)$.
\par
Observe that each element $w\in
C(w_0)$ has the form $w=a_1\dots a_{m-2}\bar w$, where $w_0=a_1\dots a_m$ with
$m\geq 2$, $a_1,\dots,a_m\in\calA$ and where $\bar w\in\calA^\ast_{\geq 2}$:
indeed, by definition each $w\in C(w_0)$ can be reached from $w_0$ by a path
through words of length bigger or equal than $|w_0|$. Thus, the first $m-2$
letters  are \textit{not} changed along such a path.
\par
%By the above made weak symmetry Assumption \ref{ass:weak}, we have $C(w_1)=C(w_2)$ for $w_1,w_2\in\calA^\ast$ with
%$|w_1|=|w_2|$ if and only if there is a positive probability path from $w_1$ to $w_2$ in $\calA^\ast_{\geq |w_1|}$.
%\par
By the suffix-irreducibility Assumption \ref{ass:suffix}, we have the following important property for cones: let be $w\in\calA^\ast$ and $ab,cd\in\calA^2$; then the cone $C(wab)$ has a proper subcone
$C(wxcd)\subset C(wab)$ with a suitable choice of
$x\in\calA^\ast\setminus\{o\}$. 
\par
Recall that $[w]$ denotes the last two letters
of a word $w\in\calA^\ast_{\geq 2}$.
We say that two cones
$C(w_1)$ and $C(w_2)$, $w_1,w_2\in\calA^\ast$, are \textit{isomorphic} if
$C([w_1])=C([w_2])$. The following lemma explains why we call these cones ``isomorphic'': 
\begin{Lemma}\label{lemma:cone-isomorphism}
Let be $w_1=a_1\dots a_m$, $w_2=b_1\dots b_n\in\calA^\ast_{\geq 2}$ with 
$a_1,\dots,a_m,b_1,\dots,b_n\in\calA$ such that $C(w_1)$ and $C(w_2)$ are
isomorphic. Then:
\begin{enumerate}
%\item The cones $C(w_1)$ and $C(w_2)$ are isomorphic
%if and only if $[w_1]\in C([w_2])$.
\item The mapping $\varphi: C(w_1)\to C(w_2)$ defined by
$$
\varphi(a_1\dots a_{m-2}\bar w)=b_1\dots b_{n-2}\bar w \quad \textrm{ for }
\bar w\in \calA^\ast_{\geq 2}\textrm{ with }a_1\dots a_{m-2}\bar w\in C(w_1)
$$
is a bijection  which preserves the adjacency relation, that is, $p(w',w'')>0$ if and only if
$p\bigl(\varphi(w'),\varphi(w'')\bigr)>0$ for all $w',w''\in C(w_1)$.
\item The cones are isomorphic as subgraphs of $\mathcal{G}$.
\end{enumerate}
\end{Lemma}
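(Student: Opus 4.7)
The plan is to exploit the observation, already noted in the paragraph preceding the lemma, that every element of a cone $C(w_0)$ with $w_0=a_1\ldots a_m$ has the form $a_1\ldots a_{m-2}\bar w$ for some $\bar w\in\calA^\ast_{\geq 2}$: the prefix $a_1\ldots a_{m-2}$ is frozen along any path that stays inside $C(w_0)$, because transitions only modify the last two letters. Combined with rule (iv)---that transition probabilities depend only on the last two letters of the source word and on the replacement---this says that the suffix $\bar w$ alone governs the graph structure inside the cone.

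First I would make this precise by exhibiting a natural bijection $\Phi_{w_1}: C(w_1) \to C([w_1])$ defined by $a_1\ldots a_{m-2}\bar w \mapsto \bar w$. Given $w\in C(w_1)$ and a path $\langle w_1,u_1,\ldots,u_k,w\rangle$ inside $C(w_1)$, each word has the form $u_i = a_1\ldots a_{m-2}\bar u_i$ with $\bar u_i\in\calA^\ast_{\geq 2}$, and $p(u_i,u_{i+1})=p(\bar u_i,\bar u_{i+1})$ by rule (iv). Hence $\langle [w_1],\bar u_1,\ldots,\bar u_k,\bar w\rangle$ is a path in $\calA^\ast_{\geq 2}$ witnessing $\bar w\in C([w_1])$, so $\Phi_{w_1}$ indeed lands in $C([w_1])$. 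Conversely, any path in $C([w_1])$ lifts, by prefixing with $a_1\ldots a_{m-2}$ and invoking rule (iv) in the other direction, to a path in $C(w_1)$; this yields surjectivity, and injectivity is immediate from uniqueness of the prefix. Applying the same argument to $w_2$ yields a bijection $\Phi_{w_2}:C(w_2)\to C([w_2])$.

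Since $C([w_1])=C([w_2])$ by hypothesis, the composition $\varphi := \Phi_{w_2}^{-1}\circ\Phi_{w_1}$ is exactly the map in the statement, and is therefore a bijection $C(w_1)\to C(w_2)$ whose image consists of words in $\calL$ (each is reachable from $w_2\in\calL$ through a positive-probability path). Adjacency preservation then reduces again to rule (iv): for $w',w''\in C(w_1)$ with common prefix $a_1\ldots a_{m-2}$ and suffixes $\bar w',\bar w''$,
$$p(w',w'') \;=\; p(\bar w',\bar w'') \;=\; p(\varphi(w'),\varphi(w'')),$$
so one side is positive if and only if the other is. Assertion (2), that the induced subgraphs of $\mathcal{G}$ are isomorphic, is then a formal consequence: $\varphi$ is a vertex bijection preserving the positivity relation $p(\cdot,\cdot)>0$, and by weak symmetry (Assumption~\ref{ass:weak}) this relation is symmetric and hence coincides with the undirected edge relation of $\mathcal{G}$.

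The lemma is essentially a bookkeeping statement about the localisation of the transition probabilities to the last two letters, so I do not foresee any serious obstacle. The one step requiring some care is verifying that a path inside the cone projects, suffix by suffix, to a path in $C([w_1])$ and conversely lifts back; this uses rule (iv) together with the fact that paths inside the cone, by definition, never drop below length $|w_0|$, so the suffix stays in $\calA^\ast_{\geq 2}$ throughout and all transition probabilities involved match between the original and prefix-free versions.
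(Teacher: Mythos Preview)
Your proof is correct and follows essentially the same approach as the paper's: both rely on rule (iv) (locality of transitions to the last two letters) together with the observation that paths inside a cone freeze the prefix, so that stripping and reattaching prefixes preserves transition probabilities. Your presentation is slightly more modular---you factor $\varphi$ explicitly as $\Phi_{w_2}^{-1}\circ\Phi_{w_1}$ through the common reference cone $C([w_1])=C([w_2])$, whereas the paper constructs the witnessing path in $C(w_2)$ directly by concatenating a path from $[w_2]$ to $a_{m-1}a_m$ with the projected path---but the content is the same.
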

\begin{proof}
Proof of (1): since $C(w_1)$ and $C(w_2)$ are isomorphic we have $C([w_1])=C([w_2])$, and thus
$[w_1]=a_{m-1}a_m\in C([w_1])=C([w_2])$. Hence, there is a path $\langle [w_2], u_1,\dots,u_k,a_{m-1}a_m\rangle$ through words $u_1,\dots,u_k\in \calA^\ast_{\geq 2}$.
%
% Vice versa, if $[w_1]\in C([w_2])$ then there exists a
% path $\langle [w_2], w_1',\dots,w_k',[w_1]\rangle$ through words in
% $w_1',\dots,w_k'\in\calA^\ast_{\geq 2}$ yielding $C([w_1])\subseteq C([w_2])$ and due
% to weak symmetry the reversed path $\langle [w_1],w_k',\dots,w_1',[w_2]\rangle$ through words in
% $\calA^\ast_{\geq 2}$ can be performed with positive probability, yielding
% $C([w_2])\subseteq C([w_1])$. This shows statement $(1)$.
% \par
If $w'=a_1\dots a_{m-2}\bar w\in C(w_1)$ with $\bar w\in \calA^\ast_{\geq 2}$ then there is a path $\langle
w_1,w_1',\dots,w_l',w'\rangle$ through words $w_1',\dots,w_l' \in \calA^\ast_{\geq |w_1|}$. This
yields that $w_i'$ has the form $w_i'=a_1\dots a_{m-2}w_i''$ with some
$w_i''\in\calA^\ast_{\geq 2}$, that is, the path $\langle
a_{m-1}a_m,w_1'',\dots,w_l'',\bar w \rangle$ has positive probability to be performed. But this implies that 
\begin{eqnarray*}
&&\langle w_2=b_1\dots b_{n-2}[w_2],b_1\dots b_{n-2}u_1,\dots,b_1\dots b_{n-2}u_k,
b_1\dots b_{n-2}a_{m-1}a_m,\\
&&\quad b_1\dots b_{n-2}w_1'',\dots,b_1\dots b_{n-2}w_l'',b_1\dots
b_{n-2}\bar w\rangle
\end{eqnarray*}
is a path through words in $\calA^\ast_{\geq |w_2|}$,
that is, $b_1\dots b_{n-2}\bar w\in C(w_2)$. Thus, $\varphi$ is well-defined.
\par
Since any $w\in C(w_1)$ and its image 
$\varphi(w)$ differ only by different (constant) prefixes the mapping $\varphi$
is obviously a bijection. 
Moreover, if $w=a_1\dots a_{m-2}c_1\dots c_k\in
C(w_1)$ with $c_1,\dots,c_k\in \calA$, $k\geq 2$, and 
$\hat w= a_1\dots a_{m-2}c_1\dots
c_{k-2}w'\in C(w_1)$ with $w'\in\calA^\ast$, $1\leq |w'|\leq 3$, and $(k-2)+|w'|\geq 2$ (otherwise $\hat w\notin C(w_1)$), then 
$$
p(w,\hat w)=p(c_{k-1}c_k,w')=p(b_1\dots b_{n-2}  c_1\dots c_k,b_1\dots b_{n-2}  c_1\dots c_{k-2}w')=p\bigl(\varphi(w),\varphi(\hat w)\bigr).
$$
This yields $(1)$.
\par
Proof of $(2)$: this follows directly from $(1)$ by the bijection $\varphi$ and
 the fact that the adjacency relation is given through positive single-step transition probabilities. Hence, $C(w_1)$ and $C(w_2)$ are
isomorphic as subgraphs of $\mathcal{G}$. 
\end{proof}
The lemma says implicitly that the words of two isomorphic cones differ only by
different prefixes. Moreover, there is a natural 1-to-1 correspondence of paths
inside $C(w_1)$ and paths in an isomorphic cone $C(w_2)$ where obviously each
such path in $C(w_1)$ and the corresponding path in the other isomorphic cone $C(w_2)$ have the same
probability: let be $\langle w_0',w_1',\dots,w_m'\rangle$ a path in $C(w_1)$;
then $\langle\varphi(w_0'),\varphi(w_1'),\dots,\varphi(w_m')\rangle$ is a path
in $C(w_2)$ and
$$
\Prob\bigl[X_1=w_1',\dots,X_m=w_m'\bigl| X_0=w_0']
= \Prob\bigl[X_1=\varphi(w_1'),\dots,X_m=\varphi(w_m')\bigl| X_0=\varphi(w_0')].
$$
We remark that $C(w)$ and $C(w')$,
$w,w'\in\calA^\ast_{\geq 2}$, with $C([w])\neq
C([w'])$ can still be isomorphic as subgraphs of
$\mathcal{G}$ but we will still distinguish them as elements of different isomorphism
classes according to our definition of isomorphism of cones.
\par
% It is clear that there are only finitely many isomorphism classes
% since isomorphism depends only on the last two letters of the roots of the cones.
%
% There are only finitely many different cones up to isomorphisms, and the
%   isomorphism class of $C(w_1)$ can be identified with $a_{m-1}a_m\in\calA^2$.
%
%In order to understand cones better we collect some further important properties of cones:
Our construction of cones ensures that different cones are either nested in each other or disjoint as the next lemma will show:
\begin{Lemma}\label{lem:cone-properties}
Let be $w_1,w_2\in\calA^\ast_{\geq 2}$. Then
%\item We have $cd\in C(ab)$ if and only if $ab\in C(cd)$. In this case, we have $C(ab)=C(cd)$.
the cones $C(w_1)$ and $C(w_2)$ are either nested in each other, that is,
  $C(w_1)\subseteq C(w_2)$ or $C(w_2)\subseteq C(w_1)$, or they are disjoint, that is, $C(w_1)\cap C(w_2)=\emptyset$. If we even have $|w_1|=|w_2|$ then we have $C(w_1)=C(w_2)$ or $C(w_1)\cap C(w_2)=\emptyset$.
\end{Lemma}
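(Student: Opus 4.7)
The plan is to establish the dichotomy directly from the definition by exploiting Assumption~\ref{ass:weak} to reverse paths, and then invoke the already-established monotonicity property that $w\in C(w_0)$ implies $C(w)\subseteq C(w_0)$.

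First I would assume that $C(w_1)\cap C(w_2)\neq\emptyset$ and, without loss of generality, that $|w_1|\leq|w_2|$. Pick any $v\in C(w_1)\cap C(w_2)$. By definition there is a path $\langle w_1,u_1,\dots,u_r,v\rangle$ with all intermediate words in $\calA^\ast_{\geq|w_1|}$, and a path $\langle w_2,u'_1,\dots,u'_s,v\rangle$ with all intermediate words in $\calA^\ast_{\geq|w_2|}$. By weak symmetry, the reversed sequence $\langle v,u'_s,\dots,u'_1,w_2\rangle$ is again a path, and all of its words lie in $\calA^\ast_{\geq|w_2|}\subseteq\calA^\ast_{\geq|w_1|}$. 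Concatenating at $v$ yields a path from $w_1$ to $w_2$ through words in $\calA^\ast_{\geq|w_1|}$, so $w_2\in C(w_1)$. By the monotonicity observation already recorded in Subsection~\ref{subsec:cone-def}, this gives $C(w_2)\subseteq C(w_1)$, proving the nesting dichotomy.

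For the supplementary statement when $|w_1|=|w_2|$, I would repeat the argument with the roles of $w_1$ and $w_2$ reversed: since the length hypothesis is now symmetric, the same construction also gives $w_1\in C(w_2)$ and hence $C(w_1)\subseteq C(w_2)$, so $C(w_1)=C(w_2)$.

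The only real subtlety, and the step I would write most carefully, is the claim that reversing the path from $w_2$ to $v$ still consists of legal transitions: this is exactly where Assumption~\ref{ass:weak} is used in a non-cosmetic way, because our random walk is not generally reversible and ``there is a path forward'' has to be promoted to ``there is a path backward.'' Once this is in place, the rest amounts to bookkeeping about word lengths along the concatenated path, which is immediate from $|w_1|\leq|w_2|$.
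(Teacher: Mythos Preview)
Your proof is correct and follows essentially the same route as the paper: pick a common element of the two cones, use weak symmetry to reverse one of the two connecting paths, concatenate to land $w_2$ in $C(w_1)$, and invoke the monotonicity $w_2\in C(w_1)\Rightarrow C(w_2)\subseteq C(w_1)$; the equal-length case is handled by symmetry in both arguments. The only cosmetic difference is that the paper phrases the first part as a contradiction and spells out the second part by taking an arbitrary $w\in C(w_2)$, whereas you argue directly and simply swap the roles of $w_1,w_2$.
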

\begin{proof}
% Proof of (2): if $cd\in C(ab)$ then there exists a path $\langle ab, w_1',\dots,w_k',cd\rangle$
% through words $w_1',\dots,w_k'\in \calA^\ast_{\geq 2}$. By weak symmetry, the
% reversed path  $\langle cd, w_k',\dots,w_1',ab\rangle$ can also be performed with
% positive probability, yielding $ab \in C(cd)$. From (1) follows $C(ab)=C(cd)$
% in this case.
% \par
W.l.o.g. assume that $|w_1|\leq |w_2|$. Moreover, assume that the cones $C(w_1)$ and $C(w_2)$ are not nested in
each other and that $C(w_1)\cap C(w_2)\neq \emptyset$. Let be $w_0\in C(w_1)\cap
C(w_2)$. Then there is a path
$\langle w_1,w_1',\dots,w_k',w_0\rangle$ through words
$w_1',\dots,w_k'\in\calA^\ast_{\geq |w_1|}$ and there is a path $\langle w_2,w_1'',\dots,w_l'',w_0\rangle$ through words
$w_1'',\dots,w_l''\in\calA^\ast_{\geq |w_2|}\subseteq \calA^\ast_{\geq |w_1|}$. By weak symmetry, there is a
path $\langle w_1,w_1',\dots,w_k',w_0,w_l'',\dots,w_1'',w_2\rangle$ through
words in $\calA^\ast_{\geq |w_1|}$, and hence $w_2\in C(w_1)$ which in turn implies
$C(w_2)\subseteq C(w_1)$, a contradiction. This yields the first part of the lemma.
\par
In order to prove the second part assume that $|w_1|=|w_2|$ and w.l.o.g. $C(w_1)\subseteq C(w_2)$. It remains to show that we have then $C(w_1)=C(w_2)$. Since $w_1\in C(w_2)$ there is a path $\langle w_2,\bar w_1,\dots,\bar w_m,w_1\rangle$ through words $\bar w_1,\dots,\bar w_m\in\calA^\ast_{\geq |w_2|}$.
If $w\in C(w_2)$ then there is a path $\langle w_2,\hat w_1,\dots,\hat w_n,w\rangle$ through words $\hat w_1,\dots,\hat w_n\in\calA^\ast_{\geq |w_2|}$. Thus, there is a path
$$
\langle w_1,\bar w_m,\dots,\bar w_1,w_2, \hat w_1,\dots,\hat w_n,w\rangle
$$
though words in $\calA^\ast_{\geq |w_2|}=\calA^\ast_{\geq |w_1|}$. Hence, $C(w_2)\subseteq C(w_1)$ which yields $C(w_2)= C(w_1)$.
%Then we must have
%$w_0\notin\{w_1,w_2\}$: indeed, if we have w.l.o.g. $w_0=w_1$, then we have $w_1\in C(w_2)$ implying
%$C(w_1)\subseteq C(w_2)$, a contradiction to non-nestedness of both cones. 
\end{proof}
At this point let us mention that the weak symmetry
Assumption \ref{ass:weak} is
crucial here: if this assumption is dropped then two cones $C(w_1)$ and
$C(w_2)$, where $w_1,w_2\in\calA^\ast_{\geq 2}$ with $|w_1|=|w_2|$ and $C(w_1)\cap C(w_2)\neq \emptyset$ may be non-isomorphic. This
case makes everything much more difficult in our proofs since the property of cones from the last lemma (either nestedness or disjointness) is lost and since we want to track
the random walk's way to infinity by distinguishing which of the (disjoint) cones are
successively finally entered on its way to infinity. The author is however confident that one can adapt the situation if weak symmetry does not hold but this would need much more effort with loss of good readability of our proofs and no additional gain of the techniques; for further comments see Appendix \ref{sub:weak-symmetry}.
\par
Since isomorphism of cones depends only on the last two letters of their
roots, we have obviously only finitely many different isomorphism classes of
cones. These isomorphism classes can be described by two-lettered words
$ab\in\calA^2$: first, for each isomorphism class of cones we
fix some $ab$ representing the class of $C(ab)$.
 Let $\mathcal{J}\subseteq \calA^2$ be a system of
representants of the different isomorphism classes of cones. Thus, for every $w\in\calA^\ast_{\geq
  2}$ there is some unique $ab\in \mathcal{J}$ such that $C([w])=C(ab)$. 
Then we write $\tau\bigl(C(w)\bigr):=ab$ for the \textit{cone type} (or
\textit{isomorphism class}) of the cone
$C(w)$. 
%This definition is well-defined due to Lemma \ref{lem:cone-properties}.3.
The \textit{boundary} of $C(w)$ is given by the set 
$$
\partial C(w)=\bigl\lbrace w_0\in
C(w)\, \bigl|\, |w_0|=|w|,\exists w'\in\calA^\ast\setminus C(w): p(w,w')>0\bigr\rbrace.
$$
We have $\{[w] \mid w\in\partial C(w_1)\}=\{[w] \mid
w\in\partial C(w_2)\}$ for two ismorphic cones $C(w_1)$ and $C(w_2)$ with
$w_1,w_2\in\calA^\ast_{\geq 2}$, which follows from the following fact: if $x_1\in \partial
C(w_1)$ and $w'\in\calA^\ast\setminus C(w_1)$ with $p(x_1,w')>0$, then there is, due to
\ref{lemma:cone-isomorphism}.(1), some $x_2\in C(w_2)$ with $[x_1]=[x_2]$ 
and $p([x_2],a)=p([x_1],a)>0$, where $a\in\calA$ is the last letter of
$w'$. This implies existence of some $w''\in\calA^\ast \setminus C(w_2)$ with
$p(x_2,w'')>0$.
% We have the follwing property following from Lemma \ref{lem:cone-properties}.2:
% \begin{Cor}
% Let $w_1,w_2\in\calA^\ast_{\geq 2}$ and $w_0\in\calA^\ast, ab\in\calA^2$. If
% $C(w_1)$ and $C(w_2)$ are two isomorphic cones with $w_0ab\in\partial C(w_1)$,
% then there exists some $\bar w_0\in\calA^\ast$ such that $\bar w_0ab\in\partial
% C(w_2)$. Moreover,we even have
% $\tau(C(w_1))=\tau(C(w_2))=ab$ if $ab\in \mathcal{J}$. 
% \end{Cor}
% \begin{proof}
% Write $w_2$ as $w_2=\bar w_2 cd $ with $\bar w_2\in\calA^\ast$ and $cd\in\calA^2$.
% If $w_0ab\in\partial C(w_1)$ then there exists a path $\langle
% w_1,w_1',\dots,w_k',w_0ab\rangle$ through words
% $w_1',\dots,w_k'\in\calA^\ast_{\geq |w_1|}$. This implies that the words $w_i'$
% must have the form $w_i'=w_0w_i''$ with $w_i''\in\calA^\ast_{\geq 2}$ and $w_1$
% must also have the prefix $w_0$, say $w_1=w_0a_0b_0$ with
% $a_0b_0\in\calA^2$. Thus, the path $\langle w_2=\bar w_2 cd, \bar w_2
% w_1'',\dots,\bar w_2 w_k'',\bar w_2 ab\rangle$ is a path through words in $\calA^\ast_{\geq |w_2|}$
%  which can be performed with positive probability. Thus, $\bar w_2 ab\in
%  C(w_2)$ with $|\bar w_2ab|=|w_2|$.
% \end{proof}
% The corollary implies that $\{[w] \mid w\in\partial C(w_1)\}=\{[w] \mid
% w\in\partial C(w_2)\}$ for two ismorphic cones $C(w_1)$ and $C(w_2)$ with
% $w_1,w_2\in\calA^\ast_{\geq 2}$.
\par
We say that the graph $\mathcal{G}$ is \textit{expanding} 
if each cone $C(w_0)$, $w_0\in\calL$, contains two proper
disjoint subcones, that is, if there exist subcones $C(w_1),C(w_2)\subsetneq C(w_0)$,
$w_1,w_2\in\calL$, with $C(w_1)\cap C(w_2)=\emptyset$. We call the random walk
\textit{expanding} if the associated graph $\mathcal{G}$ is expanding.
The results below do \textit{not} depend on whether the random walk is expanding or not. 
At the end, however, we will see that the non-expanding case leads to zero
entropy.
\par
Finally, let us remark that in the case of $K$-dependent random walks on
$\calA^\ast$  suffix-irreducibi\-lity can be defined analogously and cones can be defined in the exactly same way; the different
cone types would be defined by words of length $K$. In Appendix
\ref{sub:K-dependent} we will check that suffix-irreducibility and the the
``expanding'' property are inherited by the blocked letter language if these
properties are satisfied for the $K$-dependent random walk.

\subsection{Covering of Cones by Subcones}
\label{subsec:covering}

The next task is to cover (up to a finite complement) any cone $C(w)$, $w\in\calL$, by a
finite set of pairwise disjoint subcones
$C_1,\dots ,C_{n(w)}\subset C(w)$ such that 
$$
\bigl\lbrace \tau(C_1),\dots,\tau(C_{n(w)})\bigr\rbrace =\mathcal{J} \quad \textrm{ and } \quad \Bigl|C(w)\setminus
\bigcup_{i=1}^{n(w)} C_i\Bigr|<\infty,
$$
that is, every cone type appears among these subcones and the subcones cover
$C(w)$ up to finitely many words.  We then call
$C_1,\dots,C_{n(w)}$ a \textit{covering} of $C(w)$. 
In the next subsection we show how to
construct this covering when $\mathcal{G}$ is expanding; in Subsection \ref{sub:non-expanding} we consider the case when 
$\mathcal{G}$ is  \textit{not} expanding.

\subsubsection{Covering for Expanding Random Walks}

Suppose we are given a cone $C(w)$ with $w=w_0a_0b_0\in\calL$, where 
$w_0\in\calA^\ast$ and $a_0b_0\in\calA^2$. Inside this cone we can find subcones
of the form $C(w_0w'ab)$ for \textit{each} $ab\in\calA^2$ with suitable
$w'\in\calA^\ast\setminus\{o\}$ depending on $ab$ due to suffix-irreducibility. Now we want to find subcones  of each type $ab\in\mathcal{J}$ which are even pairwise disjoint. We proceed as follows to find these pairwise disjoint cones of all types:
% , that
% is, $C(ww_1a_1b_1)\not\subseteq C(ww_2a_2b_2)$ with $w_1,w_2\in\calA^\ast$,
% $a_1b_1,a_2b_2\in \mathcal{J}$ for all these chosen cones of all different
% types $\tau(C(a_1b_1))\neq \tau(C(a_2b_2))$
since we assume in this subsection that $\mathcal{G}$ is expanding there are paths from $w=w_0a_0b_0$
inside $\calA^\ast_{\geq |w|}$ to words $w_0w_1a_1b_1$ and $w_0w_2a_2b_2$,
where $w_1,w_2\in\calA^\ast\setminus \{o\}$, $a_1b_1,a_2b_2\in\calA^2$ and
$C(w_0w_1a_1b_1)\cap C(w_0w_2a_2b_2)=\emptyset$. Then we have found a subcone
of type $\tau(C(a_1b_1))$, and we search for other cone types in the
subcone $C(w_0w_2a_2b_2)$ in the same way. Obviously, a subcone in $C(w_0w_2a_2b_2)$ does not
intersect $C(w_0w_1a_1b_1)$. Iterating this step leads to a finite set
$\{C_1,\dots,C_{|\mathcal{J}|}\}$ of subcones of $C(w)$ such that
$\{\tau(C_1),\dots,\tau(C_{|\mathcal{J}|})\}=\mathcal{J}$ and $C_i\cap C_j=\emptyset$ for
$i,j\in\{1,\dots,|\mathcal{J}|\}$ with $i\neq j$.
After we have found these non-intersecting subcones of
all types in $C(w)$ we cover the cone $C(w)$ by further disjoint subcones: let be
$D=1+\max \{|w'| \mid w'\in
  \bigcup_{i=1}^{|\mathcal{J}|} \partial C_i\}$;
%
%$D_w=\max \{d_{\mathcal{G}}(w,w') \mid w'\in
%  \bigcup_{i=1}^{|\mathcal{J}|} \partial C_i\}$, where
%  $d_{\mathcal{G}}(\cdot,\cdot)$ is the natural graph metric on $\mathcal{G}$;
define $M_D=\{w'\in C(w) \mid  |w'|= D\}$. Then we can choose a subset
      $M:=\{w_1',\dots,w_k'\}\subseteq M_D$ such that for all
      $i,j\in\{1,\dots,k\}$ with $i\neq j$ and all
      $n\in\{1,\dots,|\mathcal{J}|\}$ we have: $C(w_i')\cap C_n=\emptyset$, 
$C(w'_i)\cap C(w'_j)=\emptyset$ and 
$$
C(w)\setminus \biggl(
\bigcup_{m=1}^{\mathcal{|J|}} C_m \cup \bigcup_{n=1}^{k} C(w_n')\biggr)
$$
is finite. This is done as follows: write $M_D:=\{x_1,\dots,x_N\}$ and set
$M_0:=\emptyset$. For every $i\in \{1,\dots,N\}$, perform the following steps with increasing $i$: 
if $x_i\in \bigcup_{j=1}^{|\mathcal{J}|}C_j \cup \bigcup_{x\in M_{i-1}}C(x)$, then drop $x_i$ and set $M_i:=M_{i-1}$. 
Otherwise, set $M_i:=M_{i-1}\cup \{x_i\}$. In the latter case we cannot have
$C_j\subset C(x_i)$ for some $j\in\{1,\dots,|\mathcal{J}|\}$ due to the choice 
of $D$ (words in $\partial C_j$ have word length smaller than $D$ and all words
in $C(x_i)$ have length of at least $D$) and also not $C(x_i)\subset C_j$, which
would lead to the contradiction $x_i\in C_j$ otherwise. We also cannot have $C(x_j)\subset C(x_i)$ for $j<i$ because this implies, by Lemma \ref{lem:cone-properties}, $C(x_i)=C(x_j)$ and therefore $x_i\in C(x_j)$.
 At the end of this procedure we get some $M_N$ and set $M:=M_N$. Since every
 path from $w$ to infinity inside $C(w)$ has to pass through a word of length $D$ we have
 ensured that each $w'\in C(w)$ with $|w'|=D$ lies in one of the cones
 $C_1,\dots,C_{\mathcal{J}}, C(x)$, $x\in M$. Thus, the set $C(w)\setminus
 \bigcup_{m=1}^{|\mathcal{J}|} C_m \cup \bigcup_{x\in M} C(x)$ is finite and the covering of 
 $C(w)$ is given by the subcones
$$
C_1,\dots,C_{\mathcal{|J|}}, C(x), x\in M.
$$
% , which are
% not intersecting the above chosen subcones, such that the set difference of
% $C(w)$ and the union of subcones is finite. This is, for instance, done by taking all cones
% rooted at words $v\in C(w)$, where $v$ is at the same distance (that is,
% minimal length of a path) to $\partial C(w)$ as the
% subcone of maximal distance to $w$ and where $v$ is not contained in any of the above
% chosen subcones yet.
See Figure \ref{cone-covering} for better visualization.
\begin{figure}[htp]
\begin{center}
\includegraphics[width=7cm]{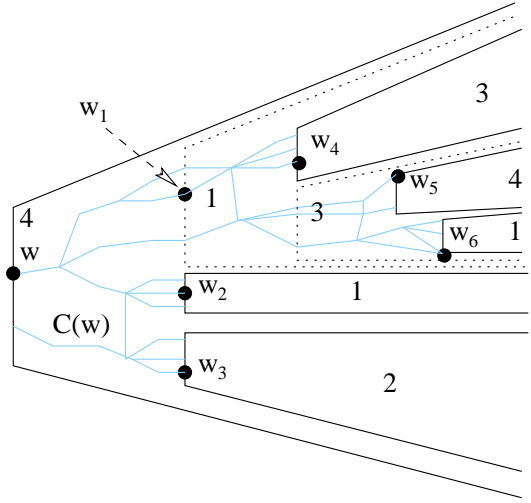}
\end{center}
\caption{Covering of cones by subcones: the numbers represent the four different
  cone types; the cones with the solid boundary lines belong to the covering of
  $C(w)$. The construction of a covering is done as follows: e.g., we find three
  three cones in $C(w)$ whose union covers $C(w)$ up to a finite set, say the
  cones $C(w_1)$ (type $1$), $C(w_2)$ (type $1$) and $C(w_3)$ (type
  $2$). We keep the cones $C(w_2)$ and $C(w_3)$ for the covering of $C(w)$ and
  search for cones of type $3$ and $4$ in the subcone $C(w_1)$. After having found cones of type
$3$ and $4$ in $C(w_1)$ (for instance, the cones $C(w_4)$ and $C(w_5)$) we take additional disjoint cones in $C(w_1)$ (in
the picture the innermost type-1 cone $C(w_6)$ only) into the covering such that the
complement of the union of all subcones in the covering is finite. That is, the
covering of $C(w)$ consists of the cones $C(w_2)$, $C(w_3)$, $C(w_4)$, $C(w_5)$
and $C(w_6)$.}
\label{cone-covering}
\end{figure}
\par
The crucial point now is that we fix a covering for each cone type such that
the relative positions of the subcones in the 
covering of some cone $C(w)$ do \textit{not} depend on the choice of the specific root $w\in\calL$ on
the boundary of $C(w)$ but only on $\tau(C(w))$: first, for each $ab\in\mathcal{J}$, choose any $w_{ab}\in\calA^\ast$
such that $w_{ab}ab\in\calL$ and fix some covering for $C(w_{ab}ab)$, say the
cones
$C(w_{ab}v_1),\dots, C(w_{ab}v_k)$, where
$v_1,\dots,v_k\in\mathcal{A}^\ast_{\geq 3}$. If
$w=w_0a_1b_1\in\calL$ with $w_0\in\calA^\ast$, $a_1b_1\in\calA^2$ and
$\tau(C(w))=ab=\tau(C(w_{ab}ab))$ then we set the covering of
$C(w)$ as the one which is inherited from the covering of $C(w_{ab}ab)$ by
the relative location of the subcones, that is, we set the covering of $C(w)$
as the set of subcones $C(w_0v_1),\dots, C(w_{0}v_k)$.
\begin{Lemma}
The set of  subcones $C(w_0v_1),\dots, C(w_{0}v_k)$ is a covering of $C(w)$.
\end{Lemma}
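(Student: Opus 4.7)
The plan is to exploit the cone-isomorphism $\varphi\colon C(w_{ab}ab)\to C(w)$ provided by Lemma~\ref{lemma:cone-isomorphism}, which acts by replacing the constant prefix $w_{ab}$ by $w_0$, i.e.\ $\varphi(w_{ab}\bar w)=w_0\bar w$ for every admissible $\bar w\in\calA^\ast_{\geq 2}$. Since $\varphi$ is a bijection that preserves the adjacency relation and (trivially) the last two letters of a word, all structural data used in the definition of a covering --- namely being a cone of a prescribed isomorphism type, being a subcone of $C(w)$, being pairwise disjoint, and having finite complement --- should transfer wholesale from the chosen covering of $C(w_{ab}ab)$ to the family $C(w_0v_1),\ldots,C(w_0v_k)$.

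First I would verify that for each $i$, $\varphi$ restricts to a bijection $C(w_{ab}v_i)\to C(w_0v_i)$. Every word in $C(w_{ab}v_i)\subseteq C(w_{ab}ab)$ has the form $w_{ab}\bar w$ with $\bar w\in\calA^\ast_{\geq|v_i|}$, so $\varphi(w_{ab}v_i)=w_0v_i$ and any path $\langle w_{ab}v_i,w_{ab}u_1,\ldots,w_{ab}u_m\rangle$ through words in $\calA^\ast_{\geq|w_{ab}v_i|}$ is carried by $\varphi$ to the path $\langle w_0v_i,w_0u_1,\ldots,w_0u_m\rangle$ through words in $\calA^\ast_{\geq|w_0v_i|}$, because $\varphi$ preserves adjacency and the length condition $|w_{ab}u_j|\geq|w_{ab}v_i|$ is equivalent to $|u_j|\geq|v_i|$, which is in turn equivalent to $|w_0u_j|\geq|w_0v_i|$. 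The same argument in reverse, applied to $\varphi^{-1}$, shows that the restriction is surjective onto $C(w_0v_i)$.

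From this restriction property I would derive the three ingredients of a covering. Since $\tau(C(w_{ab}v_i))$ depends only on the last two letters of $w_{ab}v_i$, which $\varphi$ leaves unchanged, we have $\tau(C(w_0v_i))=\tau(C(w_{ab}v_i))$, so the set $\{\tau(C(w_0v_1)),\ldots,\tau(C(w_0v_k))\}$ equals $\mathcal{J}$. For disjointness, if $w_0\bar w\in C(w_0v_i)\cap C(w_0v_j)$ then $\varphi^{-1}(w_0\bar w)=w_{ab}\bar w$ lies in $C(w_{ab}v_i)\cap C(w_{ab}v_j)$, forcing $i=j$. Finally,
$$
C(w)\setminus\bigcup_{i=1}^{k}C(w_0v_i)=\varphi\Bigl(C(w_{ab}ab)\setminus\bigcup_{i=1}^{k}C(w_{ab}v_i)\Bigr),
$$
and since $\varphi$ is a bijection this set has the same (finite) cardinality as the complement in the original covering.

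The only subtle point --- and the part I would be most careful about --- is making sure that the \emph{subcone relation} itself is preserved, i.e.\ that the image $\varphi(C(w_{ab}v_i))$ is genuinely equal to the cone $C(w_0v_i)$ and not merely contained in it. This is exactly what the argument in the previous paragraph establishes, using the bijectivity of $\varphi$ on the ambient cone together with the fact that the constant difference $|w_{ab}|-|w_0|$ makes the defining length thresholds for the two subcones correspond to each other. Once this is in place, the covering axioms transfer automatically.
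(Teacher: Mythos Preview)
Your proof is correct. Both your argument and the paper's rest on the same underlying mechanism---transferring the covering data from $C(w_{ab}ab)$ to $C(w)$ via the prefix-replacement bijection---but the organization differs. The paper verifies each covering axiom separately: it builds an explicit path from $w$ to each $w_0v_i$ to show the $C(w_0v_i)$ are subcones, then argues disjointness and finiteness of the complement by contradiction, in each case stripping the prefix $w_0$, transporting the offending path or word back to $C(w_{ab}ab)$, and reattaching the prefix $w_{ab}$ to contradict the original covering. You instead front-load the work by proving once that $\varphi$ restricts to a bijection $C(w_{ab}v_i)\to C(w_0v_i)$, after which all three axioms follow formally from bijectivity. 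Your packaging is tidier and makes the role of Lemma~\ref{lemma:cone-isomorphism} explicit; the paper's hands-on version has the minor advantage of not needing to check that the intermediate words on a path stay inside the domain of $\varphi$, since it works directly with the suffixes $v_i$, $\bar w_N$, etc.
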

\begin{proof}
First, $C(w_0v_1),\dots, C(w_{0}v_k)$ are subcones of $C(w)$ since $ab\in
C([w])$ (yielding $w_0ab\in\partial C(w)$) and due to the following conclusion: for
each $i\in\{1,\dots,k\}$, there is a path from $w_{ab}ab$ to $w_{ab}v_i$
through words in $\calA^\ast_{\geq |w_{ab}ab|}$, which implies that there is a
path from $ab$ to $v_i$ through words in $\calA^\ast_{\geq 2}$ yielding
existence of a path from  
$w=w_0[w]$
via $w_0ab$ to $w_0v_i$ through words in $\calA^\ast_{\geq |w|}$. That is,
$C(w_0v_i)\subset C(w)$.
\par
Since $\mathcal{J}=\{\tau(C(v_1)),\dots,\tau(C(v_k))\}$ the set of subcones $\{C(w_0v_1),\dots,C(w_0v_k)\}$
contains all different types. The next step is to show disjointness of the
cones $C(w_0v_1),\dots, C(w_{0}v_k)$. Assume w.l.o.g. that
$C(w_0v_1)\subsetneq C(w_0v_2)$. Then there exists a path from $w_0v_2$ to
$w_0v_1$ through words in $\calA^\ast_{\geq |w_0v_2|}$. This implies that there
exists a path from $v_2$ to
$v_1$ through words in $\calA^\ast_{\geq |v_2|}\subseteq \calA^\ast_{\geq 3}$, which implies that there
exists a path from $w_{ab}v_2$ to
$w_{ab}v_1$ through words in $\calA^\ast_{\geq |w_{ab}v_2|}$ yielding
$C(w_{ab}v_1)\subseteq C(w_{ab}v_2)$, a contradiction to the choice of
$C(w_{ab}v_1)$, $C(w_{ab}v_2)$ in the covering of $C(w_{ab}ab)$. Thus, the cones
$C(w_0v_1),\dots, C(w_{0}v_k)$ are pairwise disjoint.
\par
Analogously, we show that $C(w)\setminus \bigcup_{i=1}^k C(w_0v_i)$ is
finite. Assume that this set difference is \textit{not} finite. Then for every
$N\in\mathbb{N}$ with $N\geq 3$, there exists some $\bar w_N\in\calA^\ast$ with
$|\bar w_N|=N$ and $w_0\bar w_N\in\calA^\ast\cap
\overline{\bigcup_{i=1}^k C(w_0v_i)}$ 
such that there is a path from $w=w_0[w]$ to $w_0\bar w_N$
through words in $\calA^\ast_{\geq |w|}$. Since $[w]\in C(ab)$ there is a path
from $ab$ to $[w]$ through words in $\calA^\ast_{\geq 2}$ implying that there
exists a path from $ab$ to $\bar w_N\in
\overline{\bigcup_{i=1}^k C(v_i)}$ through words in $\calA^\ast_{\geq 2}$. But
this implies that there
exists a path from $w_{ab}ab$ to $w_{ab}\bar w_N\in
\overline{\bigcup_{i=1}^k C(w_{ab}v_i)}$ through words in $\calA^\ast_{\geq
  |w_{ab}ab|}$. This gives a contradiction since $C(w_{ab}ab)\setminus
\bigcup_{i=1}^k C(w_{ab}v_i)$ is finite and therefore $N$ cannot be large. This yields the claim.
\end{proof}
Hence, the covering of a cone depends only on its cone type, which describes the relative location
of its subcones in its interior.
\par
We can also cover $\calL$ (up to a finite set) by a finite number of
non-intersecting subcones, where each cone type appears. To this end, we
just apply the algorithm explained above and take pairwise disjoint cones of the form $C(w)$ with
$w\in\calL$ and $|w|\geq 2$. We denote by
$C_1^{(0)},\dots,C_{n_0}^{(0)}$ the covering of $\calL$, which contains all
types in $\mathcal{J}$ and which satisfies $\big|\calL \setminus \bigcup_{i=1}^{n_0} C_i^{(0)} \bigr|<\infty$.

\subsubsection{Non-Expanding Random Walks}
\label{sub:non-expanding} 

Now we explain how to proceed if $\mathcal{G}$ is \textit{not} expanding, that is, there
is a cone $C(w)$, $w\in\calL$, which does \textit{not} contain two proper
disjoint subcones. Recall that due to suffix-irreducibility there is, for every
$ab\in\mathcal{J}$, a subcone $C(w_1)\subset C(w)$ with $[w_1]=ab$. Thus, all
cones do \textit{not} have two proper disjoint subcones, because otherwise we
get a contradiction to the choice of $w$. 
This non-expanding case may, in particular, occur if $\calL$ is a proper subset of $\calA^\ast$.
%For $ab,cd\in\calA^2$, recall from Lemma \ref{lem:cone-properties} that $cd\in C(ab)$ if and only if $ab\in
%C(cd)$. 
%
%In the present setting of no two disjoint subcones in any cone, we must have that
%$\partial C(ab)=\calA^2$. 
Take now disjoint cones $C(a_1b_1),\dots,C(a_db_d)$, where $d\in\N$,
$a_1b_1,\dots,a_db_d\in\calA^2$ with $C(a_ib_i)\cap C(a_jb_j)=\emptyset$ for
all $i,j\in\{1,\dots,d\}$ with $i\neq j$ and $\calL \setminus \bigcup_{k=1}^d
C(a_kb_k)$ is finite. As already mentioned above the cones $C(a_ib_i)$,
$i\in\{1,\dots,d\}$, do \textit{not} contain two proper disjoint
subcones. Thus, we can then cover any cone $C(w)$, $w\in\calA^\ast_{\geq 2}$, by the subcone
$C(w_1)$ for any $w_1\in C(w)$ with $|w_1|=|w|+1$ and $p(w,w_1)>0$.
\par
\begin{Ex}\label{ex:nonexpanding}
In order to illustrate this situation we give a short example for this case:
let $\calA=\{a,b\}$,
$p(o,a)=p(a,o)=p(o,b)=p(b,o)=p(a,ab)=p(b,ba)=\frac12$ and
$p(ab,aba)=\frac{2}{3},p(ba,b)=\frac{1}{3}$,
$p(ba,bab)=\frac{3}{4},p(ab,a)=\frac{1}{4}$. The set $\calL$
is then given by all words of the form $ababa\dots ba$, $ababa\dots bab$,
$baba\dots bab$ and $baba\dots baba$. The random walk is transient and
satisfies the Assumptions \ref{ass:weak} and \ref{ass:suffix}. We have
$C(ab)\cap C(ba)=\emptyset$ and $C(ab)=C(aba)\cup \{ab\}$ and $C(ba)=C(bab)\cup \{ba\}$.
\end{Ex}
%This implies that, for any $w\in\calL$,  $C(w)=\{v\in\calL \mid |v|\geq
%|w|\}\cap \calL$. 
The next step is to show that a non-expanding random walk converges to one of
finitely many infinite words. More precisely, since we consider transient
random walks, $|X_n|$ tends almost surely to infinity. Therefore, we must have that the
prefixes of arbitrary length of $X_n$
stabilize for $n$ large enough, that is, for each $N\in\N$ there exists almost
surely some index $n_N\in\N$ such that the prefixes of length $N$ of $X_{n_N},X_{n_N+1},
X_{n_N+2}, \dots$, remain constant forever. Thus, $(X_n)_{n\in\N_0}$ tends to
some infinite (random) word $X_\infty\in\calA^{\N}$.
\begin{Lemma}
If $(X_n)_{n\in\N_0}$ is non-expanding, then the support of $X_\infty$ is finite.
\end{Lemma}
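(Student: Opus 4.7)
The plan is to show that $X_\infty$ takes at most $d$ values, where $d$ is the number of level-$2$ cones $C(a_1b_1),\dots,C(a_db_d)$ covering $\calL$ up to a finite set, as constructed at the beginning of Subsection \ref{sub:non-expanding}. Since the walk is transient and these cones cover $\calL$ up to finitely many words, $(X_n)_{n\in\N_0}$ almost surely eventually enters and remains forever in exactly one of them; denote its (random) index by $I$. It then suffices to show that, conditional on $I=i$, the limit $X_\infty$ is deterministic.

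The key step is to construct inside $C(a_ib_i)$ a canonical chain of nested subcones. Starting from any cone $C(w)$ with $|w|\geq 2$, the covering construction of Subsection \ref{sub:non-expanding} yields a subcone $C(w_1)$ with $|w_1|=|w|+1$ that covers $C(w)$ up to a finite set. The main claim is that the underlying set $C(w_1)$ does not depend on the particular choice of root $w_1$: if $C(w_1')$ is another such covering subcone of the same length, then by Lemma \ref{lem:cone-properties} the cones $C(w_1)$ and $C(w_1')$ are either nested or disjoint, non-expansion rules out disjointness of two proper subcones inside $C(w)$, and nestedness at equal length forces $C(w_1)=C(w_1')$. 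Iterating this from $C(a_ib_i)$ produces a unique descending chain $C(a_ib_i)=C_0^{(i)}\supsetneq C_1^{(i)}\supsetneq C_2^{(i)}\supsetneq\dots$ in which every word in $C_k^{(i)}$ of length $\geq k+2$ shares a fixed prefix of length $k$. These nested prefixes define a unique infinite word $\omega_i\in\calA^\N$.

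To finish, I would note that $C_{k-1}^{(i)}\setminus C_k^{(i)}$ is finite for each $k$ by the covering property, so on $\{I=i\}$ the walk eventually lies in $C_k^{(i)}$ for every $k$, by induction on $k$ using $|X_n|\to\infty$. Consequently, for each fixed $k$, the first $k$ letters of $X_n$ coincide with those of $\omega_i$ for all $n$ sufficiently large, which gives $X_\infty=\omega_i$ almost surely on $\{I=i\}$. The support of $X_\infty$ is therefore contained in $\{\omega_1,\dots,\omega_d\}$ and hence finite. The main obstacle is the uniqueness-as-a-set claim for the next-level subcone; this is precisely where non-expansion, Lemma \ref{lem:cone-properties}, and the covering construction must work together, and once it is in hand the rest follows routinely from transience.
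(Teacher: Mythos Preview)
Your proof is correct. Both your argument and the paper's hinge on the same structural fact---in the non-expanding case no cone admits two disjoint proper subcones---but you deploy it differently.

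The paper argues by contradiction at the level of $X_\infty$: conditioned on the walk finally entering some cone $C(a_0b_0c_0)$, if $X_\infty$ could take two values differing first at position $|w|+1$, one exhibits disjoint subcones $C(wa_1b_1c_1)$ and $C(wa_2b_2c_2)$ of $C(a_0b_0c_0)$, contradicting non-expansion. Thus $X_\infty$ is deterministic on each such event, and finiteness follows because there are only finitely many choices of $a_0$ and $b_0c_0$.

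You instead build the limit constructively: inside each level-$2$ cone $C(a_ib_i)$ you produce a canonical nested chain $C_0^{(i)}\supsetneq C_1^{(i)}\supsetneq\cdots$ whose well-definedness (as sets) comes from Lemma~\ref{lem:cone-properties} plus non-expansion, and then read off the limiting word $\omega_i$ from the common prefixes. Your uniqueness step is clean and in fact shows a bit more than you need: \emph{any} $w_1\in C(w)$ with $|w_1|=|w|+1$ gives the same set $C(w_1)$, not just those arising from the covering construction. The paper's route is slightly shorter because it avoids spelling out the full chain, while yours is more explicit about what $X_\infty$ actually equals and makes the role of the finite covering complement transparent.
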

\begin{proof}
First, assume that $X_\infty$ starts with positive probability with the letter
$a_0\in\calA$. Assume also that $\Prob[\forall
n\geq 1: X_n\in C(a_0b_0c_0) \mid X_0=a_0b_0c_0]>0$ for some
$b_0c_0\in\calA^2$ with $a_0b_0c_o\in\calL$. We denote by $A$ the event that $X_\infty$ starts with the letter
$a_0$ and that the random walk finally enters $C(a_0b_0c_0)$ on its way to
infinity. Then $\mathbb{P}[A]>0$.
On this event $A$, assume now that the random walk tends with
positive probability to some
infinite words with prefixes $wa_1$ and $wa_2$, where $w\in\calA^\ast_{\geq 2}$
starts with the letter $a_0$ and $a_1,a_2\in\calA$ with $a_1\neq a_2$.
Then there must be words $wa_1b_1c_1, wa_2b_2c_2\in C(a_0b_0c_0)$,
$b_1c_1,b_2c_2\in\calA^2$, such that 
$$
\mathbb{P}\bigl[ \exists n\in\N: X_n=wa_ib_ic_i,\forall m\geq n: X_m\in
C(wa_ib_ic_i) \,\bigl|\, A\bigr]>0 \textrm { for } i\in\{1,2\}.
$$
Obviously, $C(wa_1b_1c_1)\cap C(wa_2b_2c_2)=\emptyset$. But this leads to the
contradiction that $C(a_0b_0c_0)$ has two proper disjoint subcones.
% Since $\mathcal{G}$ is not expanding and $C(wabc)\cap C(wdef)=\emptyset$ it
% must be that -- on the event $A$ -- the
% random walk can enter either $C(wabc)$ \textit{or} $C(wdef)$ on its way to
% infinity almost surely: indeed, if the random walk could enter both cones with
% positive probability then, by suffix-irreducibility, the random walk could
% escape to infinity through  $C(wabc)$ and $C(wdef)$ with positive probability,
% yielding a covering of $C(a_0b_0c_0)$ by at least two proper disjoint subcones,
% a contradiction. 
Therefore, $C(wa_1b_1c_1)\cap \calL=\emptyset$ or $C(wa_2b_2c_2)\cap
\calL=\emptyset$, yielding that the letter $a_1$
(or $a_2$) is deterministic on the event $A$. By induction, the infinite limiting word
$X_\infty$ is deterministic on the event $A$, and it depends only on $a_0$ and $b_0c_0$. Since there
are only finitely many possibilities for $a_0$ and $b_0c_0$, the limiting word $X_\infty$
can only take finitely many values.
\end{proof}
The last lemma and suffix-irreducibility directly imply that the support of the random walk
is a proper subset of $\calA^\ast$ if $(X_n)_{n\in\N_0}$ is non-expanding. The
limiting words in Example \ref{ex:nonexpanding} are $ababab\ldots$ and $bababa\ldots$.
%
%
% In order to
% see this, assume that the random walk tends with positive probability to an
% infinite (random) word $X_\infty$ with prefix $w_0\in\calA^2$. On this event, assume that the
% random walk tends to some
% infinite words with prefixes $wabc$ and $wdef$, where $w\in\calA^\ast_{\geq 2}$
% has prefix $w_0$,
% $a,b,c,d,e,f\in\calA$ with $a\neq d$. Since $\mathcal{G}$ is not expanding and $C(wabc)\cap C(wdef)=\emptyset$ it
% must be that -- on the event $[X_\infty \textrm{ has prefix } w_0]$ -- the
% random walk can only reach either $C(wabc)$ or $C(wdef)$ on its way to
% infinity almost surely. That is, the letter $a$
% (or $d$) is deterministic, and by induction the infinite limiting word is
% deterministic on the event $[X_\infty \textrm{ has prefix } w_0]$. Since there
% are only finitely many possibilities for $w_0$, the limiting word $X_\infty$
% can only take finitely many values.
%
% This yields that the cones
% $C(wabc)$ and $C(wdef)$ are disjoint and this contradicts the asusmption that
% there are no two disjoint subcones in $C(w)$.

\section{Last Entry Times}
\label{sec:exit-times}
In this section we prove a law of large numbers, which turns out to describe the asymptotic
entropy in the later section. For this purpose,
we define last entry times (compare with \cite{gilch:08}), for which we derive a law of large
numbers. In this section we will assume that $(X_n)_{n\in\mathbb{N}_0}$ is
transient and we will assume Assumptions \ref{ass:weak} and
\ref{ass:suffix}, where we make explicit comments when these assumptions are
essential at some points. Throughout this section, we will also use the following notations:
$w_0,w_1,w_2\in\calA^\ast\setminus\{o\}$ and $a,b,c,d,a_1,b_1,a_2,b_2,\ldots\in\calA$.

\subsection{Last Entry Time Process}
\label{subsec:lastentry}
We define the following \textit{last entry times}. Let $\mathbf{e}_0$ be the first
time at which the random
walk visits $\bigcup_{i=1}^{n_0} \partial C^{(0)}_i$ and stays in one of the
cones $C^{(0)}_1,\dots,C^{(0)}_{n_0}$ afterwards forever, that is,
$$
\e{0} := \inf \bigl\lbrace m\in\N_0 \,\bigl|\, \exists i\in\{1,\dots,n_0\}
\, \forall n\geq m: X_n\in C_i^{(0)} \bigr\rbrace.
$$
In particular, $X_{\e{0}}\in \bigcup_{i=1}^{n(0)} \partial C_i^{(0)}$ and
$X_{\e{0}-1}\notin \bigcup_{i=1}^{n(0)} C_i^{(0)}$. In other words, at time
$\e{0}$ the random walk finally enters one of the cones $C_i^{(0)}$ with no further exits.
Inductively, if $X_{\mathbf{e}_k}=w\in\calL$ for $k\geq 0$ and if $C(w)$ has the covering (determined only
by the type of $C(w)$) consisting of the subcones $C_1^{(k)},\dots,C_{n(w)}^{(k)}$
as explained in Section \ref{sec:cones}, then
$$
\e{k+1}:= \inf \bigl\lbrace m>\e{k} \,\bigl|\, \exists i\in\{1,\dots,n(w)\}
\, \forall n\geq m: X_n\in C_i^{(k)}\bigr\rbrace.
$$
In particular, $X_{\e{k+1}}\in \bigcup_{i=1}^{n(w)} \partial C_i^{(k+1)}$ and
$X_{\e{k+1}-1}\notin \bigcup_{i=1}^{n(w)} \partial C_i^{(k)}$.
Transience of $(X_n)_{n\in\mathbb{N}_0}$ yields $\e{k}<\infty$ for all
$k\in\N_0$ almost surely.
Observe that $X_n$, $n\geq \e{k}$, has the prefix $w_0$ if $X_{\e{k}}=w_0ab$.
Define the \textit{relative increments} $(\W_k)_{k\in\N_0}$ between two last entry times as follows: 
set $\W_0:=X_{\e{0}}$; for $k\geq 1$: if $X_{\mathbf{e}_{k-1}}=w_0ab$ and $X_{\mathbf{e}_{k}}=w_0w_1cd$, then set
$\W_k:=w_1cd$. 
Since we
have only finitely many different cone types and the subcones of the covering of
any cone $C$ are nested at uniformly bounded distance (w.r.t. minimal path lengths) to $\partial C$, the random variables $\W_k$ can
take only finitely many different values. Observe that we can reconstruct the
values of the $X_{\e{k}}$'s from the values of the $\W_{k}$'s: if
$\W_{l}=w_la_lb_l$ for $l\leq k$ then $X_{\e{k}}=w_0w_1\dots w_ka_kb_k$.
\par
For $w\in\calL$, define
$$
\mathcal{S}(w):=\bigcup_{i=1}^{n(w)} \partial C_i,
$$
where $C_1,\dots,C_{n(w)}$ is the covering of $C(w)$ according to Section
\ref{sec:cones}. Observe that $\mathcal{S}(w_1)=\mathcal{S}(w_2)$ if $C(w_1)=C(w_2)$.
Define for
$x=a_1\dots a_k\in\calA^\ast$ and $y=a_1\dots a_{k-2}b_{k-1}b_k\dots b_{k+d}\in
C(x)$ with $d\geq 1$ and $d=d(x,y):=|y|-|x|$:
$$
\mathds{L}(x,y)  := \sum_{n\geq 0} \Prob\Bigl[X_n=y,X_{n-1}\notin C(y),\forall m\in\{1,\dots,n\}: X_m\in
C(x)\Bigl| X_0=x\Bigr].
$$
If $d=1$ then $\mathds{L}(x,y)=\bar L(a_{k-1}a_k,b_{k-1}b_{k}b_{k+1})$. If
$d \geq 2$ then $\mathds{L}(x,y)$ can be rewritten as
\begin{equation}\label{equ:mathdsL}
\sum_{\substack{y_1,\dots,y_{d-1}\in\calA^3:\\ y_i[1]=b_{k-2+i}}}\bar
L(a_{k-1}a_k,y_1)\cdot 
\prod_{j=1}^{d-2}
\bar L(y_j[2]y_j[3],y_{j+1})\cdot  \bar L(y_{d-1}[2]y_{d-1}[3],b_{k+d-2}b_{k+d-1}b_{k+d});
\end{equation}
the last equation follows from the fact that $\mathds{L}(x,y)$ depends on $x$
only by its last two letters $a_{k-1}a_k$ and by decomposition of the paths from
$x$ to $y$ w.r.t the last times
when the sets $\calA^{k}$, $\calA^{k+1},\dots,\calA^{k+d-1}$ are visited on the
way from $x$ to $y$. That is, the $l$-th factor in (\ref{equ:mathdsL})
corresponds to the part of the path from $x$ to $y$ between the last entry of
$\calA^\ast_{\geq k+l-1}$ at the word $a_1\dots a_{k-2}b_{k-1}\dots b_{k+l-3}y_{l-1}[2]y_{l-1}[3]$ and
the last entry to $\calA^\ast_{\geq k+l}$ at the word $a_1\dots a_{k-2}b_{k-1}\dots
b_{k+l-2}y_{l}[2]y_{l}[3]$ (with $y_0[2]y_0[3]=a_{k-1}a_k$ and $y_d=b_{k-2}b_{k-1}b_k$). Moreover, $\mathds{L}(x,y)=\mathds{L}(a_{k-1}a_k,b_{k-1}b_k\dots b_{k+d})$.
\par
If $x_1\in \calL$, $x_2\in\mathcal{S}(x_1)$ and  $x_3\in\mathcal{S}(x_2)$ then 
$$
\mathds{L}(x_1,x_3) = \sum_{y\in \partial C(x_2)} \mathds{L}(x_1,y)\cdot \mathds{L}(y,x_3)
$$
by decomposition w.r.t. the last visit of the set $\partial C(x_2)$ since
$C(x_3)\subset C(x_2)\subset C(x_1)$. In particular, if
$\mathbb{P}[X_{\e{k}}=x_1,X_{\e{k+1}}=x_2,\dots,X_{\e{k+l}}=x_{l+1}]>0$ for
$x_1,\dots,x_{l+1}\in\calL$ then we have
% if $w\in\calL$ with
% $\mathbb{P}[X_{\e{k+2}}=w]>0$ then 
% \begin{eqnarray*}
% \mathbb{P}[X_{\e{k+2}}=w] &=& \sum_{w_0\in\calL} \mathbb{P}[X_{\e{k}}=w_0,
% X_{\e{k+2}}=w]\\
% &=& \sum_{w_0\in\calL} G(o,w_0) \cdot \mathds{L}(w_0,w) \cdot \xi([w])\\
% &=& \sum_{w_0\in\calL} G(o,w_0)  \cdot \sum_{w_1\in \mathcal{S}(w_0)} \mathds{L}(w_0,w_1)\cdot \mathds{L}(w_1,w) \cdot \xi([w])
% \end{eqnarray*}
% and therefore
\begin{eqnarray}
&&\mathbb{P}[X_{\e{k}}=x_1,X_{\e{k+1}}=x_2,\dots, X_{\e{k+l}}=x_{l+1}]\nonumber \\
&=&
\sum_{x_0\in\calL \setminus C(x_1)} G(o,x_0 | 1)\cdot p(x_0,x_1) \cdot
\mathds{L}(x_1,x_2)\cdot\ldots \cdot  \mathds{L}(x_l,x_{l+1})\cdot \xi([x_{l+1}])\label{equ:X-decomposition}
\end{eqnarray}
by decomposition on the final entries of the cones $C(x_1),\dots,C(x_{l+1})$.
We obtain the following important observation:
\begin{Prop} \label{prop:W-process}
The process $\bigl(\W_{k}\bigr)_{k\geq 1}$ is a Markov chain with transition probabilities
$$
q(x,y) := \begin{cases}
\frac{\xi([y])}{\xi([x])}\mathds{L}(x,y), & \textrm{if } y\in\mathcal{S}(x),\\
0, & \textrm{otherwise}.
\end{cases}
$$
\end{Prop}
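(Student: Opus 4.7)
The strategy is to first establish the Markov property at the level of the last-entry positions $(X_{\e{k}})_{k\geq 0}$ and then transport it to the $\W$-process via the deterministic bijection
$$
(X_{\e{0}},X_{\e{1}},\ldots,X_{\e{k}}) \longleftrightarrow (X_{\e{0}},\W_1,\ldots,\W_k)
$$
built into the definition of the relative increments. The key analytic input is the decomposition (\ref{equ:X-decomposition}), and the key combinatorial input is the shift-invariance of $\mathds{L}$ encoded in (\ref{equ:mathdsL}).

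\textbf{Markov property for $(X_{\e{k}})$.} For any admissible tuple $x_1,\ldots,x_{l+1}$, I would write (\ref{equ:X-decomposition}) once with $l+1$ last-entry positions and once with only $l$. The factors $G(o,x_0|1) p(x_0,x_1)\mathds{L}(x_1,x_2)\cdots\mathds{L}(x_{l-1},x_l)$ appear identically in both expressions and cancel in the ratio, giving
$$
\mathbb{P}\bigl[X_{\e{k+l}}=x_{l+1}\,\big|\,X_{\e{k}}=x_1,\ldots,X_{\e{k+l-1}}=x_l\bigr]=\frac{\mathds{L}(x_l,x_{l+1})\,\xi([x_{l+1}])}{\xi([x_l])}.
$$
The right-hand side depends only on $(x_l,x_{l+1})$, and the positivity of $\xi([\cdot])$ (a consequence of Assumption~\ref{ass:suffix} noted after the definition of $\xi$) makes the ratio well-defined. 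Hence $(X_{\e{k}})_{k\geq 0}$ is a Markov chain.

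\textbf{Transfer to $(\W_k)$ via shift-invariance.} By the definition of the relative increments, knowing $X_{\e{0}}$ together with $\W_1,\ldots,\W_k$ determines $X_{\e{1}},\ldots,X_{\e{k}}$ by successive concatenation onto the common prefix, and conversely; in particular $[X_{\e{k}}]=[\W_k]$ for all $k\geq 1$. Inspecting (\ref{equ:mathdsL}) shows that $\mathds{L}(X_{\e{k}},X_{\e{k+1}})$ depends on $X_{\e{k}}$ only through its last two letters $[X_{\e{k}}]$, and on $X_{\e{k+1}}$ only through the letters appearing beyond the common prefix of $X_{\e{k}}$ and $X_{\e{k+1}}$, which by construction form the word $\W_{k+1}$. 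Consequently, the conditional probability displayed above is a function of $(\W_k,\W_{k+1})$ alone and equals exactly $q(\W_k,\W_{k+1})$. Summing the joint probabilities over the value of $X_{\e{0}}$ preserves this equality, since the ratio is constant in $x_0$, and yields
$$
\mathbb{P}\bigl[\W_{k+1}=y\,\big|\,\W_1=w_1,\ldots,\W_k=w_k\bigr]=q(w_k,y).
$$
The support statement $q(x,y)=0$ for $y\notin\mathcal{S}(x)$ is automatic: $X_{\e{k+1}}\in\mathcal{S}(X_{\e{k}})$ holds by the very definition of the last-entry times, and stripping the common prefix identifies the possible values of $\W_{k+1}$ with $\mathcal{S}(\W_k)$, using that the covering of a cone depends only on its type (Section~\ref{sec:cones}).

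\textbf{Main obstacle.} The only mildly delicate step is the translation between ``absolute'' quantities depending on $X_{\e{k}}$ and ``relative'' quantities depending on $\W_k$: one has to read $\mathds{L}(X_{\e{k}},X_{\e{k+1}})$ as $\mathds{L}(\W_k,\W_{k+1})$ and $\mathcal{S}(X_{\e{k}})$ as $\mathcal{S}(\W_k)$. Both reductions are immediate from (\ref{equ:mathdsL}) and the fact that the covering of a cone depends only on its type, so no new probabilistic estimate is required beyond careful bookkeeping.
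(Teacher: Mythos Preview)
Your proposal is correct and follows essentially the same approach as the paper: both arguments rest on the decomposition (\ref{equ:X-decomposition}), the shift-invariance $\mathds{L}(x_{i-1}(\underline{w}),x_i(\underline{w}))=\mathds{L}(w_{i-1},w_i)$ from (\ref{equ:mathdsL}), and the bijection between $(X_{\e{k}})$- and $(\W_k)$-trajectories, followed by summation over the value of $X_{\e{0}}$. The only difference is organizational---you first phrase the computation as a Markov property for $(X_{\e{k}})$ and then transfer, whereas the paper writes out the joint law of $(\W_1,\ldots,\W_k)$ directly as a sum over $w_0$ and takes the ratio---but the underlying calculation is identical.
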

\begin{proof}
Let be $w_0,\dots,w_{k+1}\in\calA^\ast\setminus\{o\}$ such that 
$w_0\in \bigcup_{j=1}^{n_0}\partial
C_j^{(0)}$, $w_{i+1}\in S(w_i)$ for all
$i\in\{0,\dots,k\}$ and $\Prob[\W_{0}=w_0,\dots, \W_{k+1}=w_{k+1}]>0$. For any such sequence \mbox{$\underline{w}=(w_0,\dots,w_{k+1})$,} we set $x_0(\underline{w}):=w_0$ and inductively: if
$x_{k-1}(\underline{w})=y_{k-1}a_{k-1}b_{k-1}$ with $y_{k-1}\in\calA^\ast$ and
$a_{k-1}b_{k-1}\in\calA^2$ then set $x_k(\underline{w}):=y_{k-1}w_k$. That is,
if $\W_{k}=w_k$ then $X_{\e{k}}=x_k(\underline{w})$. 
Then:
\begin{eqnarray*}
&& \mathbb{P}\bigl[\W_1=w_1,\dots,\W_k=w_k\bigr]
= \sum_{w_0\in \bigcup_{j=1}^{n_0}\partial C_j^{(0)}}
\mathbb{P}\bigl[\W_0=w_0,\dots,\W_k=w_k\bigr]\\
&=& \sum_{w_0\in \bigcup_{j=1}^{n_0}\partial C_j^{(0)}} \mathbb{P}\bigl[X_{\e{0}}=w_0,X_{\e{1}}=x_1(\underline{w}),\dots,X_{\e{k}}=x_k(\underline{w})\bigr]\\
&=&
\sum_{w_0\in \bigcup_{j=1}^{n_0}\partial C_j^{(0)}} \sum_{w'\in\calL\setminus C(w_0)}
G(o,w'|1)\cdot p(w',w_0)\cdot \prod_{i=1}^k 
\mathds{L}(x_{i-1}(\underline{w}),x_i(\underline{w}))\cdot \xi([x_k(\underline{w})])\\
&=&
\sum_{w_0\in \bigcup_{j=1}^{n_0}\partial C_j^{(0)}} \sum_{w'\in\calL\setminus C(w_0)}
G(o,w'|1)\cdot p(w',w_0)\cdot \prod_{i=1}^k 
\mathds{L}(w_{i-1},w_i)\cdot \xi([w_k]).
\end{eqnarray*}
The last equation arises from (\ref{equ:X-decomposition}) by decomposing
the paths by the last entries to the sets
$\partial C_i$, where $C_i$ denotes the cone with $X_{\e{i}}\in\partial C_i$.
Now we obtain:
\begin{eqnarray*}
&&\mathbb{P}\bigl[\W_{k+1}=w_{k+1} \mid \W_1=w_1,\dots,\W_k=w_k\bigr]\\
&=&
\frac{\mathbb{P}\bigl[\W_1=w_1,\dots,\W_k=w_k,\W_{k+1}=w_{k+1}\bigr]}{\mathbb{P}\bigl[\W_1=w_1,\dots,\W_k=w_k\bigr]}\\
&=& \frac{\sum_{w_0\in \bigcup_{j=1}^{n_0}\partial C_j^{(0)}} \sum_{w'\in \calL\setminus C(w_0)}
G(o,w'|1)\cdot p(w',w_0)\cdot \prod_{i=1}^{k+1} \mathds{L}(w_{i-1},w_i)\cdot
\xi([w_{k+1}])}{\sum_{w_0\in \bigcup_{j=1}^{n_0}\partial C_j^{(0)}} \sum_{w'\in\calL\setminus C(w_0)}
G(o,w'|1)\cdot p(w',w_0)\cdot \prod_{i=1}^k\mathds{L}(w_{i-1},w_i)
\cdot \xi([w_k])}\\
&=&q(x,y).
\end{eqnarray*}
\end{proof}
% The random variables $\W_k$, $k\geq
% 1$, can take values in
Define the set
$$
\mathcal{W}_0:=\bigl\lbrace w\in\calA^\ast \bigl| 
%\begin{array}{c}
\exists w_0\in\calA^\ast,ab\in\calA^2 \textrm{ with }  %\Prob[\W_{0}=w_0ab]>0\\
%\textrm{and } 
\Prob[\W_{0}=w_0ab, \W_1=w]>0 
%\end{array}
\bigr\rbrace /subseteq \calA^\ast_{\geq 3}.
$$
%which is contained in $\calA^\ast_{\geq 3}$.
%Observe that the transition probabilities depend on $x$ only by its last two
%letters. 
For the next proof we need the following properties: 
if $a_1b_1,a_2b_2\in\calA^2$ with $\tau(C(a_1b_1))=\tau(C(a_2b_2))$ then we
have $C(a_1b_1)=C(a_2b_2)$ (see Lemma \ref{lem:cone-properties}) and therefore
$a_2b_2\in C(a_1b_1)$. In this case we also have
$\mathds{L}(a_1b_1,w)>0$ for $w\in\calA^\ast_{\geq 3}$ if and only if $\mathds{L}(a_2b_2,w)>0$. This follows
from the simple fact that $a_2b_2\in C(a_1b_1)$ implies that there are paths
from $a_1b_1$ to $a_2b_2$ (and vice versa) through words in $\calA^\ast_{\geq
  2}$. 
\begin{Lemma}\label{lem:supp-W0}
For all $k\geq 1$, $\mathrm{supp}(\Prob[\W_k=\cdot])=\mathcal{W}_0$.
\end{Lemma}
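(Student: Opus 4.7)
My plan is to prove the equality by induction on $k$, exploiting the Markov structure of $(\W_k)_{k\geq 1}$ from Proposition \ref{prop:W-process} together with two structural facts gathered just before the lemma: first, $\mathds{L}(a_1b_1,y)>0 \Leftrightarrow \mathds{L}(a_2b_2,y)>0$ whenever $\tau(C(a_1b_1))=\tau(C(a_2b_2))$; and second, $\xi(ab)>0$ for every $ab\in\calA^2$ admissible as the suffix of a word in $\calL$ (by Assumption \ref{ass:suffix}), so that $q(x,y)>0$ iff $\mathds{L}([x],y)>0$.

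The base case $k=1$ is immediate from the definition of $\mathcal{W}_0$. For the inductive step the decisive fact is that both $\{\tau(C([w])) : w\in\mathcal{W}_0\}$ and $\{\tau(C([w])) : w\in\mathrm{supp}(\Prob[\W_0=\cdot])\}$ equal the full set $\mathcal{J}$ of cone types. For the first of these: given any $w_0 ab\in\mathrm{supp}(\Prob[\W_0=\cdot])$, Lemma \ref{lem:cone-properties} yields $C(w_0ab)=C_j^{(0)}$ for the unique $j$ with $\W_0\in\partial C_j^{(0)}$, whose fixed covering by construction contains a subcone of every type $cd\in\mathcal{J}$; the random walk can reach the boundary of such a subcone from $\W_0$ and then stay inside forever with positive probability (using suffix-irreducibility together with positivity of $\xi$), so $\W_1$ has positive probability to be a boundary word of that subcone. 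The second equality holds by the very construction of the initial covering $C_1^{(0)},\dots,C_{n_0}^{(0)}$ of $\calL$.

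For the inclusion $\mathcal{W}_0\subseteq\mathrm{supp}(\Prob[\W_k=\cdot])$ with $k\geq 2$: given $w\in\mathcal{W}_0$ there are $w_0,ab$ with $\Prob[\W_0=w_0 ab,\W_1=w]>0$, whence $\mathds{L}(ab,w)>0$. By the first equality and the inductive hypothesis, one can choose $x\in\mathcal{W}_0=\mathrm{supp}(\Prob[\W_{k-1}=\cdot])$ with $\tau(C([x]))=\tau(C(ab))$; the cone-type independence of $\mathds{L}$ then gives $\mathds{L}([x],w)>0$, so $q(x,w)>0$ and $\Prob[\W_k=w]\geq\Prob[\W_{k-1}=x]\,q(x,w)>0$. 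Conversely, if $w\in\mathrm{supp}(\Prob[\W_k=\cdot])$, the Markov property supplies $x\in\mathcal{W}_0$ with $\mathds{L}([x],w)>0$; by the second equality there is some $\tilde w_0 cd\in\mathrm{supp}(\Prob[\W_0=\cdot])$ with $\tau(C(cd))=\tau(C([x]))$, and cone-type independence again gives $\mathds{L}(cd,w)>0$, hence $\Prob[\W_0=\tilde w_0 cd,\W_1=w]>0$, that is, $w\in\mathcal{W}_0$.

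The only potentially delicate point is the bookkeeping: one must verify that the support of $\W_0$ and the support of every subsequent $\W_k$ project surjectively onto the full set $\mathcal{J}$ of cone types, so that the one-step transition sets available after $\W_0$ and after $\W_{k-1}$ coincide. Once this is pinned down the Markov-chain argument runs directly, and the lemma follows without any additional analysis of the generating functions.
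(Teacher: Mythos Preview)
Your proof is correct and follows essentially the same approach as the paper. Both arguments rest on the two structural facts you identify: that $\mathds{L}(a_1b_1,\cdot)>0$ depends only on $\tau(C(a_1b_1))$, and that every covering (including the initial one) contains subcones of all types in $\mathcal{J}$. The only organizational difference is that you run an explicit induction on $k$ via the Markov property of $(\W_k)_{k\geq 1}$, whereas the paper argues directly through the process $(X_{\e{k}})_{k\geq 0}$ without invoking the induction hypothesis for the backward inclusion; in particular, the paper picks an arbitrary element of $\mathrm{supp}(\Prob[X_{\e{k-2}}=\cdot])$ and exploits that its covering contains a subcone of the required type, rather than first reducing to $\mathcal{W}_0$ via induction.
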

\begin{proof}
By definition, we obviously have
$\mathrm{supp}(\Prob[\W_1=\cdot])=\mathcal{W}_0$. For $k>1$ we show both inclusions.
Let be $y\in\mathcal{W}_0$. Then there are $w_0\in\calA^\ast$ and
$ab\in\calA^2$ with $w_0ab\in\bigcup_{j=1}^{n_0}\partial C_j^{(0)}$ and
$w_0y\in\mathcal{S}(w_0ab)$ and
\begin{eqnarray*}
% \Prob[\W_{0}=w_0ab] & = & \sum_{w'\in\calL\setminus C(w_0ab)} G(o,w')\cdot
% p(w',w_0ab)\cdot \xi(ab)>0\quad \textrm{ and}\\ 
\Prob[\W_{0}=w_0ab,\W_{1}=y]&=& \sum_{w'\in\calL\setminus C(w_0ab)} G(o,w')\cdot
p(w',w_0ab)\cdot
\mathds{L}(w_0ab,w_0y)\cdot \xi([y])\\
&=&
\sum_{w'\in\calL\setminus C(w_0ab)} G(o,w')\cdot
p(w',w_0ab)\cdot
\mathds{L}(ab,y)\cdot \xi([y])>0.
\end{eqnarray*}
Take now any $\bar w\bar a \bar b\in\mathrm{supp}(\Prob[X_{\e{k-2}}=\cdot])$.
Since the covering of every cone contains subcones of all different types, the
cone $C(\bar w\bar a \bar b)$ has in its covering a cone of type $\tau(C(ab))$. Hence, there
are $w_k\in\calA^\ast$, $a_kb_k\in\calA^2$ with $\bar w w_ka_kb_k\in \mathcal{S}(\bar w\bar a \bar b)$,
$\tau(C(a_kb_k))=\tau(C(ab))$
and  $m_k\in\N$ such that $p^{(m_k)}(o,\bar w w_ka_kb_k)>0$. 
%Note that $y\in\mathcal{S}(ab)=\mathcal{S}(a_kb_k)$ (see Lemma \ref{lem:cone-properties}). 
Thus,
\begin{eqnarray*}
\Prob[\W_{k}=y] &\geq & \Prob[X_{\e{k-1}}=\bar w w_ka_kb_k,\W_k=y]\\
&=& \sum_{w'\in\calL\setminus C(\bar w w_ka_kb_k)}
G(o,w')\cdot p(w',\bar w w_ka_kb_k)\cdot\mathds{L}(\bar w w_ka_kb_k, \bar w w_ky)\cdot \xi([y])\\
&=& \sum_{w'\in\calL\setminus C(\bar w w_ka_kb_k)}
G(o,w')\cdot p(w',\bar w w_ka_kb_k)\cdot\mathds{L}(a_kb_k, y)\cdot \xi([y]).
\end{eqnarray*}
By the remark before the lemma, we have $\mathds{L}(a_kb_k, y)>0$ and therefore $\Prob[\W_{k}=y] >0$, yielding $\mathcal{W}_0\subseteq \mathrm{supp}(\Prob[\W_k=\cdot])$.
\par
For the other direction, take any $y\in \mathrm{supp}(\Prob[\W_k=\cdot])$. % with
% $a_1b_1\in\calA^2$ being the first two letters of $y$ and $[y]=a_2b_2$
Then there is some $w_{k-1}ab\in\calL$ such that 
\begin{eqnarray*}
0& < & \Prob[X_{\e{k-1}}=w_{k-1}ab,X_{\e{k}}=w_{k-1}y]\\
&=& \sum_{w'\in\calL \setminus C( w_{k-1}ab)}G(o,w')\cdot p(w',w_{k-1}ab)\cdot
\mathds{L}(w_{k-1}ab,w_{k-1}y)\cdot \xi([y]).
\end{eqnarray*}
In particular, $\mathds{L}(ab,y)>0$.
Since the initial covering of $\calL$ contains a cone of type
$\tau(C(ab))$ there are  $w_0\in\calA^\ast$, $a_0b_0\in\calA^2$ and
some $m\in\N$ such that $w_{0}a_0b_0\in\bigcup_{i=1}^{n_0}\partial C_i^{(0)}$, $\tau(C(a_0b_0))=\tau(C(ab))$ and 
$p^{(m)}(o,w_{0}a_0b_0)>0$. Observe again 
%that $y\in\mathcal{S}(ab)=\mathcal{S}(a_0b_0)$ and 
that $\mathds{L}(a_0b_0, y)>0$ by the remark before the lemma. Therefore,
\begin{eqnarray*}
\Prob[\W_{1}=y] &\geq & \Prob[\W_{0}=w_0a_0b_0,\W_1=y]
=\Prob[X_{\e{0}}=w_0a_0b_0,\W_1=y]\\
&=&\sum_{w'\in\calL\setminus C(w_0a_0b_0)} G(o,w')\cdot  p(w',w_0 a_0b_0)\cdot\mathds{L}(w_0a_0b_0, w_0y)\cdot \xi([y])\\
&=& \sum_{w'\in\calL\setminus C(w_0a_0b_0)} G(o,w')\cdot  p(w',w_0 a_0b_0)\cdot\mathds{L}(a_0b_0, y)\cdot \xi([y])>0.
\end{eqnarray*}
% and therefore $\Prob[\W_{1}=y] >0$.
This yields $\mathrm{supp}(\Prob[\W_k=\cdot])\subseteq
\mathrm{supp}(\Prob[\W_1=\cdot])=\mathcal{W}_0$ and the claim of the lemma follows.
%
% Then there is
% $\bar a_1\bar b_1\in\calA^2$ with $\mathds{L}(\bar a_1 \bar b_1,y)>0$ and $\xi(a_2b_2)>0$. By
% construction of our coverings there is some $w_0\in\calA^\ast$ with $w_0\bar
% a_1 \bar b_1\in\bigcup_{i=1}^{n_0}\partial
% C_i^{(0)}$. Choose $m\in\N$ such that $p^{(m)}(o,w_0 \bar a_1 \bar b_1)>0$. Then:
% $$
% \Prob[\W_1=y]\geq p^{(m)}(o,w_0\bar a_1 \bar b_1)\cdot \mathds{L}(\bar a_1 \bar
% b_1,y)\cdot \xi(a_2b_2)>0.
% $$
\end{proof}
With the last lemma we can show:
\begin{Lemma}\label{lemma:aperiodic}
The Markov chain $(\W_k)_{k\in\N}$ is positive recurrent and aperiodic.
\end{Lemma}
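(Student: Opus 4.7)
The plan is to use the finite state space of $(\W_k)_{k\ge 1}$ (the $\W_k$ take only finitely many values, so $\mathcal{W}_0$ is finite), so that positive recurrence reduces to irreducibility, and aperiodicity reduces to producing two coprime return times from a single state to itself.

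The backbone of both arguments is the cone-type invariance of the data entering the transition probabilities. First, $\mathcal{S}(w)$ depends on $w$ only through $\tau(C([w]))$ (this is the observation following the definition of $\mathcal{S}$). Second, $\mathds{L}(ab,\cdot)$ depends on $ab$ only through $\tau(C(ab))$: this follows from the explicit factorization (\ref{equ:mathdsL}) and the cone-isomorphism of Lemma \ref{lemma:cone-isomorphism}, and is precisely the remark preceding Lemma \ref{lem:supp-W0}. Combining these with Lemma \ref{lem:supp-W0}, each $y_2\in\mathcal{W}_0$ comes with some $ab\in\calA^2$ for which $\mathds{L}(ab,y_2)>0$ and $y_2\in\mathcal{S}(ab)$; hence $q(z,y_2)>0$ for \emph{every} $z\in\mathcal{W}_0$ whose last two letters have type $\tau_0:=\tau(C(ab))$.

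Irreducibility then follows in two steps. Given $y_1,y_2\in\mathcal{W}_0$, choose $\tau_0$ as above. By the covering construction in Section \ref{sec:cones}, the covering of $C(y_1)$ contains a subcone of every type in $\mathcal{J}$, so $\mathcal{S}(y_1)$ contains a word $z$ with $\tau(C([z]))=\tau_0$ and $q(y_1,z)>0$. The previous paragraph then gives $q(z,y_2)>0$, so $q^{(2)}(y_1,y_2)>0$. Positive recurrence is now automatic on the finite state space $\mathcal{W}_0$.

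For aperiodicity I would exhibit both $q^{(2)}(y,y)>0$ and $q^{(3)}(y,y)>0$ for some $y\in\mathcal{W}_0$. Setting $y_1=y_2=y$ in the argument above yields the two-step return. A three-step return $y\to z_1\to z_2\to y$ is obtained by inserting an arbitrary intermediate step: pick any $z_1\in\mathcal{S}(y)$ with $q(y,z_1)>0$, then apply the same reasoning to $z_1$ in place of $y_1$ to produce $z_2\in\mathcal{S}(z_1)$ of type $\tau_0$ and finally $q(z_2,y)>0$. Since $\gcd(2,3)=1$, the chain is aperiodic. The main technical point, and essentially the only real work, is extracting the two type-invariance statements above from Sections \ref{sec:genfun} and \ref{sec:cones}; once that is in hand the argument reduces to a short combinatorial routine.
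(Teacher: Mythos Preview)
Your proof is correct and follows essentially the same route as the paper: reduce to irreducibility on the finite state space $\mathcal{W}_0$, use that every covering contains subcones of all types to get two-step connectivity $q^{(2)}(y_1,y_2)>0$, and then combine a two-step and a three-step return for aperiodicity. One small imprecision: the remark before Lemma~\ref{lem:supp-W0} only asserts that the \emph{positivity} of $\mathds{L}(ab,\cdot)$ depends on $\tau(C(ab))$, not its value---but positivity is all you use, so the argument is unaffected.
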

\begin{proof}
Since $\mathcal{W}_0$ is finite it suffices to show that the process
$(\W_k)_{k\in\N}$ is irreducible and aperiodic. First we show
irreducibility. Let be
$w_1=w'a_1b_1,w_2\in\mathcal{W}_0$. Then there is some $w_0 a_0 b_0\in\bigcup_{j=1}^{n_0}\partial C_j^{(0)}$ such that 
\begin{eqnarray*}
\Prob[\W_1=w_2]&\geq & \Prob[X_{\e{0}}=w_0a_0b_0,\W_1=w_2]\\
&=& \sum_{w'\in\calL\setminus C(w_0a_0b_0)} G(o,w')p(w',w_0a_0b_0) \mathds{L}(w_0a_0 b_0,w_0w_2)\xi([w_2])>0.
\end{eqnarray*}
In particular, $\mathds{L}(a_0b_0,w_2)=\mathds{L}(w_0a_0b_0,w_0w_2)>0$. By construction of coverings, $C(a_1b_1)$ has a
subcone  of type $\tau(C(a_0b_0))$ in its covering, say the cone
$C(\tilde w)$ with $\tilde w\in C(a_1b_1)\cap\mathcal{W}_0$ and $\mathds{L}(a_1b_1,\tilde
w)>0$. %We write $[\tilde w]=\tilde a \tilde b\in\calA^2$. 
Then:
\begin{eqnarray}\label{equ:Wk-iireducible}
\Prob[\W_3=w_2 \mid \W_1=w_1] &\geq &  q(w_1,\tilde w) \cdot q(\tilde w, w_2)\\
&=&\mathds{L}(a_1b_1,\tilde w)\mathds{L}([\tilde w], w_2) \frac{\xi([w_2])}{\xi(a_1b_1)}>0, \nonumber
%&=&  \mathds{L}(a_1b_1,\tilde w)\mathds{L}(\tilde a \tilde b, w_2) \frac{\xi([w_2])}{\xi(a_1b_1)}>0,\nonumber
\end{eqnarray}
which follows from the fact that
$\mathds{L}([\tilde w], w_2)>0$ due to $[\tilde w]\in C(a_0b_0)$ and $\mathds{L}(a_0 b_0,
w_2)>0$ (recall the remark before Lemma \ref{lem:supp-W0}). This proves irreducibility and thus positive recurrence of $(\W_k)_{k\in\N}$.
\par
In order to see aperiodicity of the process $(\W_k)_{k\in\N}$ choose in the
proof above $w_1=w_2$, which yields that the period of $(\W_k)_{k\in\N}$ is
either $1$ or $2$. Now let be $w\in\mathcal{W}_0$ and take any $\hat
w\in\mathcal{W}_0$ with $q(w,\hat w)>0$. Then according to
(\ref{equ:Wk-iireducible}) we get
$$
\Prob[\W_4=w,\W_2=\hat w \mid \W_1=w]= q(w,\hat w)\cdot \Prob[\W_3=w \mid
\W_1=\hat w]>0,
$$
which implies aperiodicity.
\end{proof}

For sake of better identification of the cones, we now switch to a more
suitable representation of cones and coverings.
We identify the different cone types by numbers
$\mathcal{I}:=\{1,\dots,r\}\subset\mathbb{N}$. If $C(w)$ is a cone of type $i\in\mathcal{I}$, then the
 covering of $C(w)$ (according to Subsection \ref{subsec:covering}) has
 $n(i,j)$ subcones of type $j\in\mathcal{I}$. We denote these subcones of type
 $j$ by $C_{j_{i,k}}=C_{j_{i,k}}(w)\subset C(w)$ with $1\leq k \leq n(i,j)$ or we just identify them by $j_{i,1},\dots,j_{i,n(i,j)}$, which correspond to the subcones of type
$j$ with different locations inside $C(w)$. In particular, we choose this
enumeration of the subcones of type $j$ in a consistent way: if $C(w_{ab}v_m)$
belongs to the covering of $C(ab)$, $i=\tau(C(ab))$, with $C(w_{ab}v_m)$ being the $k$-th cone of
type $j$ in the covering of $C(ab)$ (identified by $j_{i,k}$ w.r.t. $ab$), then
the $k$-th subcone of type $j$ in the covering of any cone $C(w_0ab)$ is the
subcone $C(w_0v_m)$; compare with the construction of the covering of any cone
$C(w)$ starting from the covering of the cone $C(w_{ab}ab)$  in Subsection
\ref{subsec:covering}. That is, by this enumeration of subcones we ensure that
the relative position of $C_{j_{i,k}}(w)$ in the interior of $C(w)$ is always
the same for any $w\in\calL$ with $i=\tau(C(w))$.
We will sometimes omit the root $w$ in the
notation of the subcones when it will be clear from the context and when only the
relative position of a subcone in some given cone will be of importance.
\par 
We now track the random walk's way to infinity by looking which of the cones
are finally entered successively. For this purpose, define $\mathbf{i}_k:=j_{i,l}$
if $\tau(C(X_{\e{k-1}}))=i$ and $X_{\e{k}}\in \partial
C_{j_{i,l}}(X_{\e{k-1}})$. If we set additionally $\mathbf{i}_0:=C(X_{\e{0}})$,
then the sequence $(\mathbf{i}_k)_{k\in\N_0}$ tracks the random walk's way to infinity. 
%Furthermore, let $\mathbf{x}_k$ denote the last two letters of $X_{\e{k}}$.
\par
At this point we recall the relation between $\W_k$ and $X_{\e{k}}$: if
$X_{\e{0}}=\mathbf{W}_0=w_0a_0b_0$ and $\W_{1}=w_1a_1b_1$ then $X_{\e{1}}=w_0w_1a_1b_1$; in
general, if $X_{\e{k-1}}=wa_{k-1}b_{k-1}$ and $\W_{k}=w_ka_kb_k$ then
$X_{\e{k}}=ww_ka_kb_k$. That is, there is a natural bijection of trajectories
of $(\W_k)_{k\in\N_0}$ and $(X_{\e{k}})_{k\in\N_0}$.
In particular, the values of the
$\W_k$'s determine the values of the $\mathbf{i}_k$'s uniquely, since the last two
letters of $\W_{k-1}$ describe $\tau(C(X_{\e{k-1}}))$ and $\W_k$ describes
$\tau(C(X_{\e{k}}))$ and the corresponding number in the enumeration of subcones. For a better
visualization of the values of $\mathbf{i}_k$, see Figure \ref{fig:subcones-numbering}.
\begin{figure}[htp]
\begin{center}
\includegraphics[width=7cm]{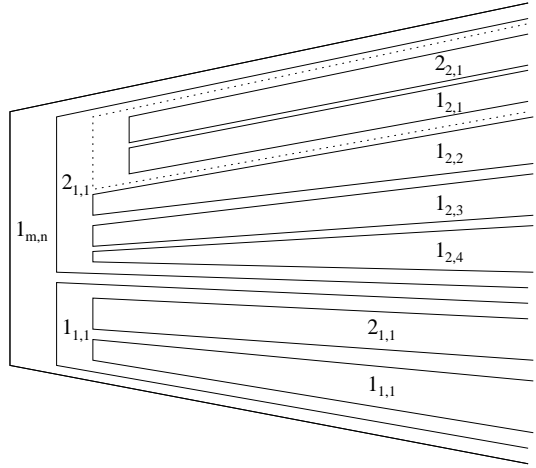}
\end{center}
\caption{Numbering of subcones: the cones with the solid boundary belong to the
covering while the cone with the dotted line does not.}
\label{fig:subcones-numbering}
\end{figure}

In other words, the random variables $\mathbf{i}_k$ collect the information of
the different cones which are
entered successively by the random walk $(X_n)_{n\in\N_0}$ on its way to
infinity, while the $\W_k$'s keep, in addition, the information where the single subcones are
finally entered.
\par
%For $s,t\in\calI$, let $n(s,t)$ denote the number of different cones of type
%$t$ in the covering of a cone of type $s$.
Define 
$$
\mathcal{W}:=\left\lbrace 
(j_{m,n},x) \Biggl| 
\begin{array}{c}
x\in\mathcal{W}_0, \exists w_0\in\calL: \Prob[\W_0=w_0,\W_1=x]>0, \\
\tau(C([w_0]))=m, \tau(C([x]))=j, 1\leq n\leq n(m,j)\\ 
\textrm{ with } x\in \partial C_{j_{m,n}}([w_0])
%$x$ being in the $n$-th subcone of type $j$ in $C([w_0])$}
  \\
% \textrm{ with }  \Prob[(\mathbf{i}_k,\W_k)= ]>0 \textrm{ for  some } k\geq 1
\end{array}
\right\rbrace.
$$
In other words, $(j_{m,n},x)\in \mathcal{W}$ if
$x\in\mathcal{W}_0$ with $\tau(C(x))=j$ and if there is
$w_0a_0b_0\in\calL$ such that $\tau(C(a_0b_0))=m$, $\Prob[X_{\e{0}}=w_0a_0b_0,X_{\e{1}}=w_0x]>0$
and $C(x)$ being the $n$-th subcone of type $j$ in the covering of $C(a_0b_0)$.
\begin{Prop}\label{prop:i-W-markov}
The process $\bigl((\mathbf{i}_k,\mathbf{W}_k) \bigr)_{k\in\N}$ is a positive
recurrent, aperiodic Markov chain on the state space $\mathcal{W}$. Moreover, for
$(i_{m,n},w_{1}),(j_{s,t},w_2)\in\mathcal{W}$, the transition probabilities are given by
\begin{equation}\label{equ:Q-def}
\Prob\Bigl[ (\mathbf{i}_k,\mathbf{W}_k)=(j_{s,t},w_2)\Bigl|
(\mathbf{i}_{k-1},\mathbf{W}_{k-1})=(i_{m,n},w_{1})\Bigr]=
\begin{cases}
q(w_{1},w_2), & \textrm{if } s=i,\\
0, & \textrm{if } s\neq i.
\end{cases}
\end{equation}
\end{Prop}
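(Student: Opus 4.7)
The plan is to deduce everything from the properties of $(\W_k)_{k\in\N}$ established in Proposition \ref{prop:W-process} and Lemma \ref{lemma:aperiodic}, by exploiting the observation that $\mathbf{i}_k$ is a \emph{deterministic} function of the pair $(\W_{k-1},\W_k)$. Indeed, if $\tau(C([\W_{k-1}]))=i$, then by construction of the enumeration of subcones in Subsection \ref{subsec:covering} (fixed consistently in terms only of the outer cone type $i$), the word $\W_k$ alone determines which subcone $C_{j_{i,l}}$ of $C(X_{\e{k-1}})$ is finally entered at time $\e{k}$. Hence there is a fixed map $\psi$ with $\mathbf{i}_k=\psi(i,\W_k)$, so that the history $(\mathbf{i}_j,\W_j)_{j\leq k}$ is completely recoverable from $(\W_j)_{j\leq k}$.

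The Markov property of $(\mathbf{i}_k,\W_k)_{k\in\N}$ then follows immediately from that of $(\W_k)$: conditioning on the past of the enriched process is the same as conditioning on the past of $(\W_j)$, which by Proposition \ref{prop:W-process} reduces to conditioning on $\W_{k-1}$ alone. To verify the formula (\ref{equ:Q-def}), suppose $(\mathbf{i}_{k-1},\W_{k-1})=(i_{m,n},w_1)$ and $(\mathbf{i}_k,\W_k)=(j_{s,t},w_2)$: the former forces $\tau(C([w_1]))=i$, so $\psi(i,w_2)$ is necessarily of the form $j'_{i,\cdot}$ with $j'=\tau(C([w_2]))$. Hence the transition probability vanishes unless $s=i$ (in which case $t$ is automatically the unique index encoded by $w_2$), and otherwise it equals $\Prob[\W_k=w_2\mid\W_{k-1}=w_1]=q(w_1,w_2)$. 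That the chain takes values in $\mathcal{W}$ is immediate from the definition of $\mathcal{W}$ together with Lemma \ref{lem:supp-W0}.

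For the ergodic properties, $\mathcal{W}$ is finite because $\mathcal{W}_0$ is finite and the labels $j_{i,k}$ range over a finite set, so it suffices to show irreducibility and aperiodicity. Given arbitrary $(i_{m,n},w_1),(j_{s,t},w_2)\in\mathcal{W}$, the argument of Lemma \ref{lemma:aperiodic} --- in particular (\ref{equ:Wk-iireducible}) --- produces, via a suitable intermediate $\tilde w$ with $\tau(C([\tilde w]))=s$, a path in $(\W_k)$ from $w_1$ to $w_2$ of positive probability; by the deterministic dependence $\mathbf{i}_k=\psi(i,\W_k)$ and by definition of $\mathcal{W}$, the corresponding $\mathbf{i}$-coordinate lands on $j_{s,t}$ at the correct instant, yielding irreducibility. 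Aperiodicity transfers likewise: the cycles of lengths $2$ and $3$ constructed in Lemma \ref{lemma:aperiodic} return to the same pair $(i_{m,n},w)$, since the first coordinate is then determined by the second. The main subtlety to check throughout is that the consistent enumeration of subcones from Subsection \ref{subsec:covering} really makes $\psi$ depend only on the cone type $i$ and not on the specific representative $X_{\e{k-1}}$ of that type --- this independence is exactly what is needed for the enriched chain to inherit the Markov property cleanly.
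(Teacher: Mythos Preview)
Your proof is essentially correct and follows the same line as the paper's: both use that $\mathbf{i}_k$ is a deterministic function of the pair $(\W_{k-1},\W_k)$ to inherit the Markov property from Proposition \ref{prop:W-process}, and both prove irreducibility by routing through an intermediate word $\tilde w$ of the prescribed type $s$, so that $\psi(s,w_2)=j_{s,t}$ lands on the correct label.

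One point needs tightening. Your aperiodicity justification --- ``the cycles of lengths $2$ and $3$ constructed in Lemma \ref{lemma:aperiodic} return to the same pair $(i_{m,n},w)$, since the first coordinate is then determined by the second'' --- is not accurate as stated: $\mathbf{i}_k=\psi\bigl(\tau([\W_{k-1}]),\W_k\bigr)$ depends on the \emph{previous} word's type, so a $\W$-cycle returning to $w$ does not automatically give an $(\mathbf{i},\W)$-cycle returning to $(i_{m,n},w)$ unless the penultimate word happens to have type $m$. The cycles in Lemma \ref{lemma:aperiodic} were not constructed with this constraint in mind. The fix is the same idea you already used for irreducibility: choose the intermediate type freely. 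The paper does exactly this --- the $2$-step irreducibility argument (your (\ref{equ:Wk-iireducible}) with target type $m$) already gives a $2$-step return to $(i_{m,n},w)$, and composing any single admissible step with a $2$-step return produces a $3$-step return, forcing period $1$.
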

\begin{proof}
Since the values of the $\mathbf{i}_k$'s are uniquely
determined by the values of the $\mathbf{W}_k$'s and since the process $(\W_k)_{k\in\N}$ is
a Markov chain, we also have that $\bigl((\mathbf{i}_k,\mathbf{W}_k)
\bigr)_{k\in\N}$ is Markovian with the proposed transition probabilities.
\par
First, we show that $\mathrm{supp}(\Prob[(\mathbf{i}_k,\mathbf{W}_k)=\cdot])=\mathcal{W}$ for $k\geq
1$. For bthis purpose, let be \mbox{$(j_{i,n},x)\in\mathrm{supp}(\Prob[(\mathbf{i}_k,\mathbf{W}_k)=\cdot])$.} Then there is some
$w_{k-1}a_{k-1}b_{k-1}\in \calL$ with 
\begin{eqnarray*}
&&\Prob[X_{\e{k-1}}=w_{k-1}a_{k-1}b_{k-1},\W_k=x]\\
&=&\sum_{w'\in\calL\setminus C(w_{k-1}a_{k-1}b_{k-1})} G(o,w')p(w',w_{k-1}a_{k-1}b_{k-1})\mathds{L}(a_{k-1}b_{k-1},x)\xi([x])>0,
\end{eqnarray*}
$\tau(C(a_{k-1}b_{k-1}))=i$ and $C(x)$ being the $n$-th subcone of type $j$ in the
covering of the cone $C(a_{k-1}b_{k-1})$. If $k=1$ then $(j_{i,n},x)\in\mathcal{W}$.
In the case $k>1$ take any $w_0a_0b_0\in\calL$ with
$\Prob[\W_0=w_0a_0b_0]>0$ and $\tau(C(w_0a_0b_0))=i$. Since $a_{k-1}b_{k-1}\in
C(a_0b_0)$ we also have $\mathds{L}(a_{0}b_{0},x)>0$ since
$\mathds{L}(a_{k-1}b_{k-1},x)>0$ (recall the remark before Lemma \ref{lem:supp-W0}). Then:
$$
\Prob[\W_0=w_0a_0b_0,\W_1=x]= \sum_{w'\in\calL\setminus C(w_{0}a_{0}b_{0})} G(o,w')p(w',w_{0}a_{0}b_{0})\mathds{L}(a_{0}b_{0},x)\xi([x])>0,
$$
yielding $(j_{i,n},x)\in\mathcal{W}$.
\par
For the other inclusion, let be $(j_{i,n},x)\in\mathcal{W}$. Then there is some
$w_{0}a_{0}b_{0}\in \calL$ with 
$$
\Prob[\W_0=w_{0}a_{0}b_{0},\W_1=x]=\sum_{w'\in\calL\setminus C(w_{0}a_{0}b_{0})} G(o,w')p(w',w_{0}a_{0}b_{0})\mathds{L}(a_{0}b_{0},x)\xi([x])>0,
$$
$\tau(C(a_{0}b_{0}))=i$ and $C(x)$ being the $n$-th subcone of type $j$ in the
covering of $C(a_{0}b_{0})$. If $k=1$ then $(j_{i,n},x)\in
\mathrm{supp}(\Prob[(\mathbf{i}_1,\mathbf{W}_1)=\cdot])$. In the case $k>1$ take any
$w_{k-2}a_{k-2}b_{k-2}\in\calL$ with
$\Prob[X_{\e{k-2}}=w_{k-2}a_{k-2}b_{k-2}]>0$. Then $C(w_{k-2}a_{k-2}b_{k-2})$
  has in its covering a subcone $C(w_{k-1}a_{k-1}b_{k-1})$ of type $i$. Since
  $a_{k-1}b_{k-1}\in C(a_0b_0)$ we have $\mathds{L}(a_{k-1}b_{k-1},x)>0$ due to
  $\mathds{L}(a_{0}b_{0},x)>0$ (once again recall the remark before Lemma \ref{lem:supp-W0})
  and
  $C(x)$ is the $n$-th subcone of type $j$ in the covering of
  $C(a_{k-1}b_{k-1})=C(a_0b_0)$. Hence,
\begin{eqnarray*}
&&\Prob[(\mathbf{i}_k,\W_k)=(j_{i,n},x)]\geq \Prob[X_{\e{k-1}}=w_{k-1}a_{k-1}b_{k-1},X_{\e{k}}=w_{k-1}x]\\
&\geq & \sum_{w'\in\calL\setminus C(w_{k-1}a_{k-1}b_{k-1})} G(o,w')p(w',w_{k-1}a_{k-1}b_{k-1})\mathds{L}(a_{k-1}b_{k-1},x)\xi([x])>0,
\end{eqnarray*}
yielding $\mathcal{W}\subseteq \mathrm{supp}(\Prob[(\mathbf{i}_k,\mathbf{W}_k)=\cdot])$, and therefore
$\mathcal{W}= \mathrm{supp}(\Prob[(\mathbf{i}_k,\mathbf{W}_k)=\cdot])$.
\par
The next task is to show irreducibility, which implies positive recurrence due
to finiteness of $\mathcal{W}$. Let
be $(i_{m,n},w_{1})$, $(j_{s,t},w_2)\in\mathcal{W}$. Take any $\bar
w\in\mathcal{W}_0$ such that $q(w_1,\bar w)>0$ and $\tau(C(\bar w))=s$, which
exists by construction of coverings. Then $w_2\in\partial C_{j_{s,t}}([\bar
w])$, that is, $C(w_2)$ is the $t$-th subcone of type $j$ in the covering of
$C([\bar w])$, yielding $q(\bar w,w_2)>0$. Hence,
\begin{eqnarray}
&&\Prob[(\mathbf{i}_3,\W_3)=(j_{s,t},w_2)\mid (\mathbf{i}_1,\W_1)=(i_{m,n},w_{1})]\label{equ:prop-5.4}\\
&\geq&  \Prob[\W_3=w_2, \W_2=\bar w \mid
(\mathbf{i}_1,\W_1)=(i_{m,n},w_{1})]\nonumber\\
&=& q(w_1,\bar w) \cdot q(\bar w, w_2)>0.\nonumber
\end{eqnarray}
Here, we used the fact that $\mathbf{i}_3=j_{s,t}$ is uniquely
determined by $w_1,\bar w,w_2$ and that this probability does not depend on $m$
and $n$. This yields irreducbility of the process
$\bigl((\mathbf{i}_k,\mathbf{W}_k) \bigr)_{k\in\N}$.
\par 
It follows that the period of the process is at most $2$. In order to see
aperiodicity, take any $\underline{w_1},\underline{w_2}\in\mathcal{W}$ with
$\Prob[(\mathbf{i}_2,\W_2)=\underline{w_2}\mid
(\mathbf{i}_1,\W_1)=\underline{w_1}]>0$. Then we get analogously to (\ref{equ:prop-5.4}):
\begin{eqnarray*}
&&\Prob[(\mathbf{i}_4,\W_4)=\underline{w_1},(\mathbf{i}_2,\W_2)=\underline{w_2}\mid
(\mathbf{i}_1,\W_1)=\underline{w_1}]\\
&=&\Prob[(\mathbf{i}_2,\W_2)=\underline{w_2}\mid
(\mathbf{i}_1,\W_1)=\underline{w_1}]\cdot \Prob[(\mathbf{i}_4,\W_4)=\underline{w_1}\mid
(\mathbf{i}_2,\W_2)=\underline{w_2}]>0.
\end{eqnarray*}
That is, the period of the process is $1$. This finishes the proof.
\end{proof}
Let us recall that  the values of the $\mathbf{i}_k$'s are uniquely
determined by the values of the $\mathbf{W}_k$'s; however, we will explicitely
keep  the values of the $\mathbf{i}_k$'s in the notation of the process for
sake of convenience. 
%
%
% \begin{eqnarray*}
% && \Prob\bigl[
% (\mathbf{i}_1,\mathbf{W}_1)=(s_{i,j},w_1),(\mathbf{i}_2,\mathbf{W}_2)=(s^{(2)},w_2)\dots
% ,(\mathbf{i}_n,\mathbf{W}_n)=(s^{(n)},w_n)\bigr]\\
% &=&
% \Prob\bigl[\tau(C(X_{\mathbf{e}_0}))=i,\mathbf{W}_1=w_1,\dots,\mathbf{W}_n=w_n\bigr]\\
% &=& \sum_{w_0\in\bigcup_{\lambda=1}^{n_0}\partial C_\lambda^{(0)}: \tau(w_0)=i} G(o,w_0)
% \mathds{L}(w_0,w_1)\mathds{L}(w_1,w_2)\cdot \ldots \cdot \mathds{L}(w_{n-1},w_n)
% \end{eqnarray*}
% for $(s_{i,j},w_1),(s^{(2)},w_2),\dots,(s^{(n)},w_n)\in\mathcal{W}$, 
% the Markov property follows as in the proof of the previous proposition. 
%
Observe that the process $(\mathbf{i}_k)_{k\in\mathbb{N}}$ is, in
general, not Markovian. This relies on the fact that $(\mathbf{i}_k)_{k\in\mathbb{N}}$ can be seen as a function of the
process $(\W_k)_{k\in\N}$: the values of the $\W_k$'s determine the values of
the  $\mathbf{i}_k$'s but not vice versa. 
\par
Define the following projection for $(i_{k,l},w_1),(j_{m,n},w_2)\in\mathcal{W}$:
\begin{equation}\label{equ:pi-def}
\pi\bigl((i_{k,l},w_1),(j_{m,n},w_2)\bigr) 
:=
\begin{cases}
(i,j_{i,n})=: (i,j_{n}),& \textrm{if } m=i,\\
(i,j_{i,1})= (i,j_{1}),& \textrm{if } m\neq i.
\end{cases}
\end{equation}
Here, $j_l$ represents the $l$-th subcone of type $j$ in the covering of a cone of type $i$,
namely the cone represented by $j_{i,l}$. We now define the \textit{hidden
Markov chain} $(\Y_k)_{k\in\N}$  by 
$$
\Y_k:=\pi\bigl((\mathbf{i}_{k},\W_{k}),(\mathbf{i}_{k+1},\W_{k+1})\bigr).
$$
In other words, $(\Y_k)_{k\in\mathbb{N}}$ traces once again the random walk's way to infinity in terms of
which subcones are entered successively \textit{without} distinguishing which of the
cone boundary points are the last entry time points $X_{\e{k}}$. At this point
let us mention that the second branch in the definition of $\pi(\cdot,\cdot)$
is not used for defining $\Y_k$, but it will be of interest in Section
\ref{sec:entropy-analyticity}. Furthermore, observe that $X_{\e{0}}$ and $(\Y_k)_{k\in\N}$
allow to reconstruct $(\mathbf{i}_k)_{k\in\N}$.
\par 
Define
$$
\mathcal{W}_\pi:=\bigl\lbrace 
(s,t_n) \bigl| s,t\in\mathcal{I}, 1\leq n\leq n(s,t)\bigr\rbrace.
$$
That is, $t_n$ corresponds
to the $n$-th subcone of type $t$ in the covering of a cone of type $s$.
\begin{Lemma}
For all $k\geq 1$, $\mathrm{supp}(\Prob[\Y_k=\cdot])=\mathcal{W}_\pi$.
\end{Lemma}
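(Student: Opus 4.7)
The plan is to prove both inclusions separately, leveraging Proposition \ref{prop:i-W-markov} and the bookkeeping set up for coverings in Subsection \ref{subsec:covering}.

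For the inclusion $\mathrm{supp}(\Prob[\Y_k=\cdot])\subseteq \mathcal{W}_\pi$, I would argue directly from the definitions. Whenever the transition $(\mathbf{i}_k,\W_k)\to(\mathbf{i}_{k+1},\W_{k+1})$ has positive probability, the cone $C(X_{\e{k+1}})$ sits inside $C(X_{\e{k}})$; so writing $\mathbf{i}_k=i_{k',l}$ (the current cone has type $i$) and $\mathbf{i}_{k+1}=j_{m,n}$, consistency of the enumeration forces $m=i$. We therefore land in the first branch of $\pi$ in (\ref{equ:pi-def}), and $\Y_k=(i,j_n)$ with $1\le n\le n(i,j)$, which is an element of $\mathcal{W}_\pi$.

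For the reverse inclusion, I would fix $(s,t_n)\in\mathcal{W}_\pi$ and exhibit a positive-probability transition whose $\pi$-image is $(s,t_n)$. Proposition \ref{prop:i-W-markov} gives $\mathrm{supp}(\Prob[(\mathbf{i}_k,\W_k)=\cdot])=\mathcal{W}$, so one can pick a state $(i_{k',l},w_1)\in\mathcal{W}$ whose current cone has type $i=s$; such a state exists because every cone type appears as a subcone type in every covering. By the construction in Subsection \ref{subsec:covering}, the covering of $C([w_1])$ contains a distinguished $n$-th subcone of type $t$, which determines some $w_2$ with $(t_{s,n},w_2)\in\mathcal{W}$. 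The transition probability $q(w_1,w_2)=\frac{\xi([w_2])}{\xi([w_1])}\mathds{L}(w_1,w_2)$ is strictly positive, because $\mathds{L}(w_1,w_2)$ depends only on $[w_1]$ (the remark preceding Lemma \ref{lem:supp-W0}) and the corresponding subcone in the covering of the reference cone of type $s$ is by construction reachable through words in $\calA^\ast_{\geq |w_1|}$. Applying $\pi$ to this one-step transition yields precisely $(s,t_n)$.

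The main point deserving care, though still routine, is the bookkeeping that the ``$n$-th subcone of type $t$ inside $C([w_1])$'' is well-defined irrespective of the specific root of type $s$. This is exactly the consistency prescription in Subsection \ref{subsec:covering}: one fixes a covering of the representative cone $C(w_{ab}ab)$ for each $ab\in\mathcal{J}$ and transports it to every other cone of the same type by relative location. Once this is in hand, positivity of $\mathds{L}$ and hence of $q(w_1,w_2)$ is automatic, and the two inclusions together give $\mathrm{supp}(\Prob[\Y_k=\cdot])=\mathcal{W}_\pi$ for every $k\ge1$.
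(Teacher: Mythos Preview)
Your proof is correct and follows essentially the same approach as the paper's. The paper also treats the inclusion $\mathrm{supp}(\Prob[\Y_k=\cdot])\subseteq\mathcal{W}_\pi$ as immediate from the definitions, and for the reverse inclusion it too produces a state at time $k$ of cone type $s$ and then transitions into the $n$-th subcone of type $t$; the only cosmetic difference is that the paper builds this state by starting from $\W_{k-1}\in\mathcal{W}_0$ (via Lemma~\ref{lem:supp-W0}) and stepping forward through the covering, whereas you invoke the support statement $\mathrm{supp}(\Prob[(\mathbf{i}_k,\W_k)=\cdot])=\mathcal{W}$ already established inside the proof of Proposition~\ref{prop:i-W-markov}.
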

\begin{proof}
The inclusion $\mathrm{supp}(\Prob[\Y_k=\cdot])\subset \mathcal{W}_\pi$ is
obvious by definition of $\Y_k$ and $\mathcal{W}_\pi$. Now we show the other inclusion.
Let be $(s,t_n)\in\mathcal{W}_\pi$. Take any $w_{k-1}a_{k-1}b_{k-1}\in\mathcal{W}_0$
with \mbox{$\Prob[\W_{k-1}=w_{k-1}a_{k-1}b_{k-1}]>0$.} Then there exists 
$w_ka_kb_k\in \mathcal{W}_0$ with $\tau(C(a_kb_k))=s$ and $q(w_{k-1}a_{k-1}b_{k-1},w_ka_kb_k)>0$
 due to the construction of coverings. Moreover, there is
$w_{k+1}a_{k+1}b_{k+1}\in \mathcal{W}_0$ with
$q(w_{k}a_{k}b_{k},w_{k+1}a_{k+1}b_{k+1})>0$ such that
$C(w_{k+1}a_{k+1}b_{k+1})$ is the $n$-th cone of type $t$ in $C(a_kb_k)$. Thus,
\begin{eqnarray*}
&&\Prob[\Y_k=(s,t_n)] \\
&\geq & \Prob[\W_{k-1}=w_{k-1}a_{k-1}b_{k-1},
  \W_k=w_ka_kb_k,\W_{k+1}=w_{k+1}a_{k+1}b_{k+1}]\\
&=&  \Prob[\W_{k-1}=w_{k-1}a_{k-1}b_{k-1}]\cdot
q(w_{k-1}a_{k-1}b_{k-1},w_ka_kb_k)\cdot q(w_ka_kb_k,w_{k+1}a_{k+1}b_{k+1})>0,
\end{eqnarray*}
yielding $(s,t_n)\in\mathrm{supp}(\Prob[\Y_k=\cdot])$.
\end{proof}
%
%
%
% Morever, observe that
% $\mathrm{supp}(\Prob[\Y_1=\cdot])=\mathcal{W}_\pi$ since -- by construction of
% the coverings -- for any
% $(s,t_n)\in\mathcal{W}_\pi$ there is $w_0ab\in \bigcup_{i=1}^{n_0} \partial
% C_i^{(0)}$ and $x\in\mathcal{W}_0$ with $\tau(C(x))=s$ and
% $\Prob[X_{\e{1}}=w_0x\mid X_{\e{0}}=w_0ab]>0$ and there is $y\in \partial
% C_{t_{s,n}}([x])$ with $q(x,y)>0$ such that
% \begin{eqnarray*}
% \Prob[\Y_1=(s,t_n)]& \geq& \Prob\bigl[X_{\e{0}}=w_0ab,X_{\e{1}}=w_0x,\W_2=y\bigr]\\
% &\geq &
% \Prob[X_{\e{0}}=w_0ab]\cdot \Prob[X_{\e{1}}=w_0x\mid X_{\e{0}}=w_0ab] \cdot q(x,y)>0.
% \end{eqnarray*}
Since the process $(\mathbf{i}_k,\mathbf{W}_k)_{k\in\mathbb{N}}$ is positive recurrent, it
has an invariant probability \mbox{measure $\nu$.} Let
$(\mathbf{i}_k^{(\nu)},\mathbf{W}_k^{(\nu)})_{k\in\mathbb{N}}$ be a Markov
chain with transition probabilities given by (\ref{equ:Q-def}) but with initial
distribution $\nu$. The corresponding hidden Markov chain $(\Y_k^{(\nu)})_{k\in\N}$ is given by
$$
\Y_k^{(\nu)}:=\pi\bigl((\mathbf{i}_{k}^{(\nu)},\W_{k}^{(\nu)}),(\mathbf{i}_{k+1}^{(\nu)},\W_{k+1}^{(\nu)})\bigr).
$$
In the next section we will link the hidden Markov chains $(\Y_k)_{k\in\N}$ and $(\Y_k^{(\nu)})_{k\in\N}$.

\subsection{Entropy of the Hidden Markov Chain related to the Last Entry Time Process}

In this subsection we derive existence of the asymptotic entropy of the hidden
Markov chains $(\Y_k^{(\nu)})_{k\in\N}$ and $(\Y_k)_{k\in\N}$.
\par
% \begin{Lemma}
% The process $(\ZZ_k)_{k\in\N}$ is ergodic.
% \end{Lemma}
% \begin{proof}
% For $\underline{s}\in\mathcal{W}_\pi$, define the matrix
% $A_{\underline{s}}=\bigl(a(\underline{x},\underline{y})\bigr)_{\underline{x},\underline{y}\in\mathcal{W}}$, where
% $a(\underline{x},\underline{y})=1$, if
% $\pi(\underline{x},\underline{y})=\underline{s}$, and
% $a(\underline{x},\underline{y})=0$ otherwise. Consider $\nu_2$ as a vector with
% entries $\nu(\underline{x},\underline{y})$, and denote by $\mathds{1}\in\mathbb{R}^{|\mathcal{W}|^2}$ the vector, whose entries are
% equal to $1$. For $\underline{s}_1,\dots,\underline{s}_n\in\mathcal{W}_\pi$, we have then
% $$
% \Prob[\ZZ_1=\underline{s}_1,\dots,\ZZ_n=\underline{s}_n] = \nu_2^T A_{\underline{s}_1} \hat Q_2 A_{\underline{s}_2}
% \hat Q_2 A_{\underline{s}_3} \hat Q_2\cdot\ldots \cdot \hat Q_2 A_{\underline{s}_n}\mathds{1}.
% $$
% Moreover, $\sum_{\underline{s}\in\mathcal{W}_\pi} A_{\underline{s}} \hat Q_2 = \hat
% Q_2$. Since $\hat Q_2$ is a stochastic matrix, $\hat Q_2$
% has the eigenvalue $1$ of dimension $1$. By Sch\"onhutz and J\"äger \cite[Theorem 5.1,
% 5.2]{schoenhuth-jaeger}, this implies that $(\ZZ_k)_{k\in\N}$ is ergodic.
% \end{proof}
First, consider the hidden Markov chain $(\Y_k^{(\nu)})_{k\in\N}$: this process is
stationary and ergodic since the underlying Markov chain
$\bigl(\mathbf{i}_k^{(\nu)},\mathbf{W}_k^{(\nu)}\bigr)_{k\in\mathbb{N}}$ is
stationary, positive recurrent and aperiodic.
%
% Since the process
% $\bigl(\mathbf{i}_k^{(\nu)},\mathbf{x}_k^{(\nu)}\bigr)_{k\in\mathbb{N}}$ is
% stationary, the process $(\ZZ_k)_{k\in\N}$ is also stationary.
Hence, there is a
constant $H(\Y)\geq 0$ such that 
\begin{equation}\label{equ:H(ZZ)}
\lim_{k\to\infty} -\frac{1}{k}\log
\Prob[\Y_1^{(\nu)}=\underline{y}_1,\dots,\Y_k^{(\nu)}=\underline{y}_k]=H(\Y)
\end{equation}
for almost every realisation $(\underline{y}_1,\underline{y}_2,\dots )\in\mathcal{W}_\pi^{\mathbb{N}}$ of
$(\Y_k^{(\nu)})_{k\in\N}$; see e.g. Cover and Thomas \cite[Theorem
16.8.1]{cover-thomas}. The number $H(\Y)$ is called the \textit{asymptotic
  entropy of the (positive recurrent) process} $(\Y_k^{(\nu)})_{k\in\N}$.
We now deduce an analogous statement for the process $(\Y_k)_{k\in\N}$.
\begin{Prop}\label{prop:Y-equal-Y-hat}
For almost every realisation $(\underline{y}_1,\underline{y}_2,\dots )\in\mathcal{W}_\pi^{\mathbb{N}}$ of $(\Y_k)_{k\in\N}$,
$$
\lim_{k\to\infty} -\frac{1}{k}\log \Prob\bigl[\Y_1=\underline{y}_1,\dots,\Y_k=\underline{y}_k\bigr]=H(\Y).
$$
\end{Prop}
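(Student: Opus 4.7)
The plan is to reduce to the stationary case via a cylinder--probability comparison. Both $(\Y_k)_{k\in\N}$ and $(\Y_k^{(\nu)})_{k\in\N}$ are images under the map $\pi$ of (\ref{equ:pi-def}) of the Markov chain $(\mathbf{i}_k,\W_k)_{k\in\N}$ with transition matrix $Q$ from (\ref{equ:Q-def}); the sole difference is that the underlying chain starts from the ``true'' distribution $\mu:=\Prob[(\mathbf{i}_1,\W_1)=\cdot]$ in one case and from the invariant distribution $\nu$ in the other. So I plan to show that the cylinder probabilities of the two hidden Markov chains are comparable up to multiplicative constants, which is enough to transfer the almost-sure convergence (\ref{equ:H(ZZ)}).

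By Proposition \ref{prop:i-W-markov}, both $\mu$ and $\nu$ are supported on all of the finite set $\mathcal{W}$ (irreducibility forces full support for the stationary measure $\nu$). Hence there exist constants $0<c_1\le c_2<\infty$ such that $c_1\nu(\underline{w})\le\mu(\underline{w})\le c_2\nu(\underline{w})$ for every $\underline{w}\in\mathcal{W}$. Decomposing the cylinder event $\{\Y_1=\underline{y}_1,\dots,\Y_k=\underline{y}_k\}$ into the disjoint union of the events $\bigl\{(\mathbf{i}_j,\W_j)=\underline{w}_j:\,j=1,\dots,k+1\bigr\}$ indexed by $(\underline{w}_1,\dots,\underline{w}_{k+1})\in\mathcal{W}^{k+1}$ satisfying $\pi(\underline{w}_j,\underline{w}_{j+1})=\underline{y}_j$ for $j=1,\dots,k$, and applying the Markov property, one writes both $\Prob[\Y_1=\underline{y}_1,\dots,\Y_k=\underline{y}_k]$ and $\Prob[\Y^{(\nu)}_1=\underline{y}_1,\dots,\Y^{(\nu)}_k=\underline{y}_k]$ as the same sum of products $\prod_{j=1}^{k}Q(\underline{w}_j,\underline{w}_{j+1})$, weighted respectively by $\mu(\underline{w}_1)$ and $\nu(\underline{w}_1)$. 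Inserting the sandwich bound on the initial weights therefore yields
\begin{equation*}
c_1\,\Prob\bigl[\Y^{(\nu)}_1=\underline{y}_1,\dots,\Y^{(\nu)}_k=\underline{y}_k\bigr]\le\Prob\bigl[\Y_1=\underline{y}_1,\dots,\Y_k=\underline{y}_k\bigr]\le c_2\,\Prob\bigl[\Y^{(\nu)}_1=\underline{y}_1,\dots,\Y^{(\nu)}_k=\underline{y}_k\bigr].
\end{equation*}

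Applying $-\tfrac{1}{k}\log$ to this sandwich shows that the two sequences differ by at most $\tfrac{1}{k}\log(c_2/c_1)$, which tends to $0$ uniformly in the realisation. Moreover, the cylinder bound immediately implies that the laws of $(\Y_k)_{k\in\N}$ and $(\Y_k^{(\nu)})_{k\in\N}$ on $\mathcal{W}_\pi^{\N}$ are mutually absolutely continuous, so the event on which the Shannon--McMillan--Breiman limit (\ref{equ:H(ZZ)}) holds for the stationary process $(\Y_k^{(\nu)})_{k\in\N}$ has full measure also under the law of $(\Y_k)_{k\in\N}$; on that event, the uniform closeness just derived forces the same limit $H(\Y)$ for the non-stationary cylinder probabilities, as desired. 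The argument is a soft perturbation of the stationary case, and the only point to verify beyond routine manipulations is that $\mu$ and $\nu$ share the same finite support $\mathcal{W}$, which is precisely the content of Proposition \ref{prop:i-W-markov}.
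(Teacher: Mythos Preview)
Your proof is correct and follows essentially the same route as the paper's: both compare the cylinder probabilities of $(\Y_k)$ and $(\Y_k^{(\nu)})$ via the sandwich $c_1\nu\le\mu\le c_2\nu$ on the common finite support $\mathcal{W}$, decompose the cylinder events over the underlying $(\mathbf{i}_k,\W_k)$-paths, and then pass to $-\tfrac{1}{k}\log$. If anything, you are slightly more explicit than the paper in justifying why the almost-sure set from (\ref{equ:H(ZZ)}) has full measure also under the law of $(\Y_k)_{k\in\N}$, via mutual absolute continuity; the paper uses this implicitly when it begins its chain of equalities with ``for almost every trajectory of $(\Y_k)_{k\in\N}$''.
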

\begin{proof}
The processes $(\Y_k^{(\nu)})_{k\in\N}$ and $(\Y_k)_{k\in\N}$ differ only by
the inital distributions of $(\mathbf{i}_1^{(\nu)}, \W_1^{(\nu)})$ and $(\mathbf{i}_1, \W_1)$. % For all $(s,t)\in\mathcal{W}_\pi$, we have
% \begin{eqnarray*}
% \mathbb{P}[Z_1=(s,t)] &=&
% \sum_{\substack{(i,x)\in\mathcal{I}\times\mathcal{W}:\\\tau(C(x))=i}}
% \nu\bigl((i,x)\bigr) \sum_{\substack{(j,y)\in\mathcal{I}\times\mathcal{W}:\\
%     \pi((i,x),(j,y))=(s,t)}} q\bigl((i,x),(j,y)\bigr),\\
% \mathbb{P}[\hat Y_1=(s,t)] &=& \sum_{\substack{(i,x)\in\mathcal{I}\times\mathcal{W}_0:\\ \tau(C(x))=i}}\mathbb{P}[(\hat{\mathbf{i}}_1,\mathbf{x}_1)=(i,x)]\sum_{\substack{(j,y)\in\mathcal{I}\times\mathcal{W}:\\ \pi((i,x),(j,y))=(s,t)}}
% % q\bigl((i,x),(j,y)\bigr)>0.
% \end{eqnarray*}
Moreover, there are constants $c,C>0$ such that
$$
c\cdot \mathbb{P}[(\mathbf{i}_1,\mathbf{W}_1)=(i_{m,n},x)] \leq   \nu(i_{m,n},x) \leq C\cdot \mathbb{P}[(\mathbf{i}_1,\mathbf{W}_1)=(i_{m,n},x)]
$$
for all $(i_{m,n},x)\in\mathcal{W}$. Denote
by $\mu_1$ the distribution of $(\mathbf{i}_1,\W_1)$. We now get for almost every trajectory
$(\underline{y}_1,\underline{y}_2,\dots )\in\mathcal{W}_\pi^{\mathbb{N}}$ of $(\Y_k)_{k\in\N}$:
\begin{eqnarray*}
&&H(\Y) = \lim_{k\to\infty} -\frac{1}{k} \log  \Prob\bigl[
\Y_1^{(\nu)}=\underline{y}_1,\dots,\Y_k^{(\nu)}=\underline{y}_k\bigr]\\
&=& \lim_{k\to\infty} -\frac{1}{k} \log
\sum_{\substack{\underline{w}_1,\dots,\underline{w}_{k+1}\in\mathcal{W}:\\
    \pi(\underline{w}_j,\underline{w}_{j+1})=\underline{y}_j\\
\textrm{for } 1\leq j\leq k}}
\nu(\underline{w}_1)
\Prob[(\mathbf{i}_l,\W_l)=\underline{w}_l\textrm{ for
} 2\leq l\leq k+1\mid (\mathbf{i}_1,\W_1)=\underline{w}_1]\\
%q(\underline{w}_1,\underline{w}_2) \cdot \ldots \cdot
% q(\underline{w}_k,\underline{w}_{k+1})
&=& \lim_{k\to\infty} -\frac{1}{k} \log
\sum_{\substack{\underline{w}_1,\dots,\underline{w}_{k+1}\in\mathcal{W}:\\
    \pi(\underline{w}_j,\underline{w}_{j+1})=\underline{y}_j\\
\textrm{for } 1\leq j\leq k}}
\mu_1(\underline{w}_1)  \Prob[(\mathbf{i}_l,\W_l)=\underline{w}_l \textrm{ for
} 2\leq l\leq k+1 \mid (\mathbf{i}_1,\W_1)=\underline{w}_1]
%q(\underline{w}_1,\underline{w}_2) \cdot \ldots \cdot
%q(\underline{w}_k,\underline{w}_{k+1})\\
\end{eqnarray*}
\begin{eqnarray*}
&=& \lim_{k\to\infty} -\frac{1}{k} \log
\sum_{\substack{\underline{w}_1,\dots,\underline{w}_{k+1}\in\mathcal{W}:\\
    \pi(\underline{w}_j,\underline{w}_{j+1})=\underline{y}_j\\
\textrm{for } 1\leq j\leq k}}
\Prob\bigl[(\mathbf{i}_1,\mathbf{W}_1)=\underline{w}_1,\dots,(\mathbf{i}_{k+1},\mathbf{W}_{k+1})=\underline{w}_{k+1}\bigr] \\
&=&\lim_{k\to\infty} -\frac{1}{k} \log \Prob\bigl[
\Y_1=\underline{y}_1,\dots,\Y_k=\underline{y}_k\bigr].
\end{eqnarray*}
% Bin ich fertig odr braucht man das folgende noch?ZZZ
% \par
% Fix arbitrary $(i,x)$ and let be $B$ the set of all trajectories of the form $\bigl((i,x),(i_2,x_2),\dots\bigr)$. Then
% \begin{eqnarray*}
% 1&=& \Prob\bigl[ (\mathbf{i}_k^{(\nu)},\mathbf{x}_k^{(\nu)})_{k\geq 1} \in B
% \mid (\mathbf{i}_1^{(\nu)},\mathbf{x}_1^{(\nu)})=(i,x) \bigr]\\
% &=& \frac{\Prob[(\mathbf{i}_1^{(\nu)},\mathbf{x}_1^{(\nu)})=(i,x)]\cdot
%   \Prob\bigl[(\mathbf{i}_k^{(\nu)},\mathbf{x}_k^{(\nu)})_{k\geq 1} \in
%   \sigma^{-1}B
%   \bigr]}{\Prob[(\mathbf{i}_1^{(\nu)},\mathbf{x}_1^{(\nu)})=(i,x)]}\\
% &=& \frac{\Prob[(\mathbf{i}_1,\mathbf{x}_1)=(i,x)]\cdot
%   \Prob\bigl[(\mathbf{i}_k,\mathbf{x}_k)_{k\geq 1} \in
%   \sigma^{-1}B
%   \bigr]}{\Prob[(\mathbf{i}_1,\mathbf{x}_1)=(i,x)]}\\
% &=& \Prob\bigl[ (\mathbf{i}_k,\mathbf{x})_{k\geq 1} \in B
% \mid (\mathbf{i}_1,\mathbf{x}_1)=(i,x) \bigr].
% \end{eqnarray*}
\end{proof}
As a consequence we obtain the next statement:
\begin{Cor}
%For almost every realisation $(\underline{s}_1,\underline{s}_2,\dots )\in\mathcal{W}_\pi^{\mathbb{N}}$ of $(\widehat{\Y}_k)_{k\in\N}$
$$
\lim_{k\to\infty} -\frac{1}{k}\int
\log\Prob\bigl[\Y_1=\underline{y}_1,\dots,\Y_k=\underline{y}_k\bigr]\,d\Prob(\underline{y}_1,\underline{y}_2,\dots) =H(\Y).
$$
\end{Cor}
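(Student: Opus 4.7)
The plan is to upgrade the almost-sure convergence in Proposition \ref{prop:Y-equal-Y-hat} to convergence of expectations by invoking the bounded convergence theorem. Define, on the path space $\mathcal{W}_\pi^{\N}$, the random variables
$$
F_k(\underline{y}_1,\underline{y}_2,\dots) := -\frac{1}{k}\log\Prob\bigl[\Y_1=\underline{y}_1,\dots,\Y_k=\underline{y}_k\bigr].
$$
By Proposition \ref{prop:Y-equal-Y-hat} we have $F_k \to H(\Y)$ for $\Prob$-a.e.\ trajectory, so it suffices to produce a uniform (deterministic) upper bound on $F_k$ that holds $\Prob$-a.s., and then pass to the limit inside the integral.

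To obtain such a bound, I would exploit the finiteness of the state space $\mathcal{W}$ of the positive recurrent Markov chain $(\mathbf{i}_k,\W_k)_{k\in\N}$ from Proposition \ref{prop:i-W-markov}. Set
$$
p_{\min} := \min\bigl\{q(\underline{w},\underline{w}') : \underline{w},\underline{w}'\in\mathcal{W},\ q(\underline{w},\underline{w}')>0\bigr\} > 0,
$$
with $q$ the transition kernel from (\ref{equ:Q-def}), and let $\mu_{\min}>0$ denote the smallest positive value of the initial distribution $\mu_1$ of $(\mathbf{i}_1,\W_1)$. Since $\Y_l$ is a deterministic function of the consecutive pair $\bigl((\mathbf{i}_l,\W_l),(\mathbf{i}_{l+1},\W_{l+1})\bigr)$, any realisation $(\underline{y}_1,\dots,\underline{y}_k)$ in the support of the law of $(\Y_1,\dots,\Y_k)$ is produced by at least one trajectory $(\underline{w}_1,\dots,\underline{w}_{k+1})\in \mathcal{W}^{k+1}$ of the pair process of strictly positive probability; by construction of $p_{\min}$ and $\mu_{\min}$, that trajectory has probability at least $\mu_{\min}\cdot p_{\min}^k$. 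Consequently,
$$
\Prob\bigl[\Y_1=\underline{y}_1,\dots,\Y_k=\underline{y}_k\bigr] \ \geq\ \mu_{\min}\cdot p_{\min}^k \quad \text{for $\Prob$-a.e.\ trajectory},
$$
and therefore
$$
0 \ \leq\ F_k \ \leq\ -\log p_{\min} \ -\ \frac{1}{k}\log \mu_{\min} \ \leq\ C
$$
$\Prob$-almost surely, for some constant $C$ independent of $k$.

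The bounded convergence theorem, applied to the almost-sure convergence $F_k\to H(\Y)$ with dominating constant $C$, then gives
$$
\lim_{k\to\infty}\int F_k\,d\Prob \ =\ H(\Y),
$$
which is exactly the statement of the corollary. No real obstacle arises: the whole step is the standard passage from a Shannon--McMillan--Breiman-type almost-sure identity to the corresponding mean convergence, and it is made immediate by the finiteness of the state space $\mathcal{W}$ on which the Markov chain driving $(\Y_k)_{k\in\N}$ lives. The only point deserving attention is the identification of the correct lower bound $\mu_{\min}\cdot p_{\min}^k$ for the joint probability, which relies on the fact that $\Y_l$ is a function of two consecutive values of $(\mathbf{i}_l,\W_l)$ so that realised $\Y$-trajectories always lift to at least one strictly positive-probability trajectory of the finite underlying Markov chain.
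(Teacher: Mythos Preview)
Your proof is correct and follows essentially the same approach as the paper: both arguments bound the joint probability below by $\mu_{\min}\cdot p_{\min}^k$ (the paper writes $c\cdot\varepsilon_0^k$) using that any realised $\Y$-trajectory lifts to a positive-probability trajectory of the finite-state Markov chain $(\mathbf{i}_k,\W_k)_{k\in\N}$, and then apply bounded convergence to upgrade the almost-sure convergence of Proposition~\ref{prop:Y-equal-Y-hat} to convergence of expectations.
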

\begin{proof}
Since $|\mathcal{W}|<\infty$ by definition, there is $\varepsilon_0>0$ such that,
for all $\underline{w}_1,\underline{w}_2\in\mathcal{W}$,
$$
\Prob[(\mathbf{i}_2,\W_2)=\underline{w}_2\mid
(\mathbf{i}_1,\W_1)=\underline{w}_1]>0 \ \textrm{ implies } \ 1\geq  \Prob[(\mathbf{i}_2,\W_2)=\underline{w}_2\mid (\mathbf{i}_1,\W_1)=\underline{w}_1] \geq \varepsilon_0. 
$$
If $(\underline{y}_1,\dots,\underline{y}_k)\in\mathcal{W}_\pi^k$ with
$\Prob[\Y_1=\underline{y}_1,\dots,\Y_k=\underline{y}_k]>0$ then there are
\mbox{$\underline{w}_1,\dots,\underline{w}_{k+1}\in\mathcal{W}$} with
$\pi(\underline{w}_j,\underline{w}_{j+1})=\underline{y}_j$ for $1\leq j\leq
k$ and $\Prob\bigl[(\mathbf{i}_1,\W_1)=\underline{w}_1,\dots,(\mathbf{i}_{k+1},\W_{k+1})=\underline{w}_{k+1}\bigr]>0$. Therefore,
\begin{eqnarray*}
0& \leq & -\frac{1}{k}
\log\Prob\bigl[\Y_1=\underline{y}_1,\dots,\Y_k=\underline{y}_k\bigr]\\
&\leq & 
-\frac{1}{k}
\log\Prob\bigl[(\mathbf{i}_1,\W_1)=\underline{w}_1,\dots,(\mathbf{i}_{k+1},\W_{k+1})=\underline{w}_{k+1}\bigr]\\
&\leq & -\frac{1}{k}\log(c\cdot \varepsilon_0^k)=
-\frac{1}{k}\log
c-\log\varepsilon_0\leq -\log c -\log\varepsilon_0,
\end{eqnarray*}
where $c=\min_{\underline{w}\in\mathcal{W}} \mathbb{P}[(\mathbf{i}_1,\mathbf{W}_1)=\underline{w}]$.
Therefore, we may exchange integral and limit, which yields the claim together with
Proposition \ref{prop:Y-equal-Y-hat}.
\end{proof}
Let be $w\in\calL$ with $|w|\geq 2$. Define
$$
\hat l(w) := -\log \sum_{w'\in \partial C(w)} L(o,w'|1).
$$
We obtain the following law of large numbers:
\begin{Prop}\label{prop:hat-l-convergence}
$$
\lim_{k\to\infty} \frac{\hat l(X_{\e{k}})}{k}=H(\Y) \quad \textrm{almost surely}.
$$
\end{Prop}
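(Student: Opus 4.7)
The plan is to show, uniformly in the realised trajectory, that $\hat l(X_{\e{k}}) = -\log \Prob[\Y_1,\ldots,\Y_{k-1}] + O(1)$, and then to conclude by Proposition~\ref{prop:Y-equal-Y-hat}. Since $(\Y_1,\ldots,\Y_{k-1})$ encodes the same information as $(\mathbf{i}_1,\ldots,\mathbf{i}_k)$, i.e.\ the cone types and relative positions of the nested cones $C(X_{\e{1}})\supsetneq\cdots\supsetneq C(X_{\e{k}})$, aggregating (\ref{equ:X-decomposition}) over compatible trajectories expresses $\Prob[\Y_1,\ldots,\Y_{k-1}]$ as a telescoping product of $\mathds{L}$-factors with specific boundary contributions. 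First I would unfold the sum $\sum_{w'\in\partial C(X_{\e{k}})} L(o,w'|1)$ by decomposing each contributing path from $o$ to $w'\in\partial C(X_{\e{k}})$ (not returning to $o$) according to its successive last entries into the nested cones, in the spirit of (\ref{equ:L-expansion})--(\ref{equ:mathdsL}), producing
\begin{align*}
\sum_{w'\in\partial C(X_{\e{k}})} L(o,w'|1) = \sum_{\substack{y_j\in\partial C(X_{\e{j}}),\\ 1\le j\le k-1}} F(o,y_1) \prod_{j=1}^{k-2}\mathds{L}(y_j,y_{j+1})\cdot B(y_{k-1}),
\end{align*}
where $F(o,y_1):=\sum_{w_0\notin C(X_{\e{1}})} L(o,w_0|1)\,p(w_0,y_1)$ and $B(y_{k-1}):=\sum_{w'\in\partial C(X_{\e{k}})}\mathds{L}(y_{k-1},w')$ are the initial and terminal contributions.

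On the other hand, summing (\ref{equ:X-decomposition}) over trajectories compatible with the realised cone signature yields
\begin{align*}
\Prob[\Y_1,\ldots,\Y_{k-1}] = \sum_{\substack{x_j\text{ compatible},\\ 1\le j\le k}} A(x_1) \prod_{j=1}^{k-1}\mathds{L}(x_j,x_{j+1})\cdot \xi([x_k]),
\end{align*}
with $A(x_1):=\sum_{w_0\notin C(x_1)} G(o,w_0|1)\,p(w_0,x_1)$. Under the natural identification $y_j\leftrightarrow x_j$, Lemma~\ref{lemma:cone-isomorphism} makes the interior products $\prod \mathds{L}$ agree termwise; the initial factors satisfy $F(o,y_1) = A(y_1)/G(o,o|1)$; and the ratio of the terminal factor $B(y_{k-1})$ to $\sum_{x_k}\mathds{L}(y_{k-1},x_k)\xi([x_k])$ is uniformly bounded between positive constants by the finiteness of cone types and because $\xi([\cdot])$ is bounded away from $0$ and $\infty$ by Assumption~\ref{ass:suffix}. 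Consequently, there exist constants $0<c\le C<\infty$, independent of $k$ and the realisation, such that
\[
c\cdot\Prob[\Y_1,\ldots,\Y_{k-1}] \,\le\, \sum_{w'\in\partial C(X_{\e{k}})} L(o,w'|1) \,\le\, C\cdot \Prob[\Y_1,\ldots,\Y_{k-1}].
\]

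Taking $-\log$, dividing by $k$, and invoking Proposition~\ref{prop:Y-equal-Y-hat} then yields $\lim_{k\to\infty}\hat l(X_{\e{k}})/k = H(\Y)$ almost surely. The main obstacle is the unfolding in the first step: one must verify that every path from $o$ to $w'$ not returning to $o$ is counted exactly once by the classification along the last entries into the $k-1$ nested cones, without over- or under-counting excursions that repeatedly enter and exit intermediate cones. This mirrors the derivation of (\ref{equ:L-expansion})--(\ref{equ:mathdsL}), but is now iterated through $k$ nested cone layers rather than through successive word-length strata; the isomorphism of cones established in Lemma~\ref{lemma:cone-isomorphism} ensures that the $\mathds{L}$-factors depend only on cone types, which is what ultimately allows the termwise comparison with the aggregated form of (\ref{equ:X-decomposition}).
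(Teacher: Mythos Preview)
Your approach is essentially the same as the paper's --- decompose the boundary $L$-sum into a product of $\mathds{L}$-factors along the nested cones and match it against the aggregated form of (\ref{equ:X-decomposition}) --- but there is one genuine imprecision at the \emph{initial} layer that you gloss over.

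You assert that $(\Y_1,\ldots,\Y_{k-1})$ ``encodes the same information as $(\mathbf{i}_1,\ldots,\mathbf{i}_k)$'' and that under the identification $y_j\leftrightarrow x_j$ the two sums match termwise. This is not quite right: $\Y_1=(s^{(1)},t^{(1)})$ records only the \emph{type} $s^{(1)}=\tau(C(X_{\e{1}}))$, not which of the (possibly several) first-level cones of that type the walk actually sits in. Consequently, in your expression for $\Prob[\Y_1,\ldots,\Y_{k-1}]$ the index $x_1$ ranges over the boundaries of \emph{all} first-level cones of type $s^{(1)}$ (in the paper's notation there are $N_j$ of them), whereas in the boundary $L$-sum the index $y_1$ is restricted to $\partial C(X_{\e{1}})$, one specific cone. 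Your relation $F(o,y_1)=A(y_1)/G(o,o|1)$ is correct pointwise, but the summation domains differ, so the products do not agree termwise and the $O(1)$ comparison does not follow automatically.

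The paper closes this gap by first proving the intermediate claim
\[
H(\Y)=\lim_{k\to\infty}-\tfrac1k\log \Prob\bigl[C(X_{\e{1}})=C(x_1),\,\Y_1=\underline{y}_1,\ldots,\Y_{k-1}=\underline{y}_{k-1}\bigr],
\]
using a comparison $c\cdot\Prob[X_{\e{1}}=x]\le \Prob[X_{\e{1}}=y]$ for all $x,y$ in the union of those $N_j$ boundaries, which holds because that union is finite with strictly positive masses. With this extra step your argument goes through; without it the claimed two-sided bound between $\sum_{w'\in\partial C(X_{\e{k}})}L(o,w'|1)$ and $\Prob[\Y_1,\ldots,\Y_{k-1}]$ is unjustified.
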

\begin{proof}
Let be $k\in\N$ and assume for the moment that $\W_l=y_la_lb_l$, where $y_l\in\calA^\ast\setminus\{o\}$ and $a_lb_l\in\calA^2$
for $0\leq l\leq k$. That is, $X_{\e{l}}=y_0y_1\dots y_la_lb_l$. 
% By definition, $X_{\e{l}}$ is on the boundary of some cone, which we denote by
% $C_l$.
Furthermore, assume that $\Y_1=(j,t^{(1)})$, where $j=\tau(C(a_1b_1))$, and $\Y_l=(s^{(l)},t^{(l)})$
for $2\leq l\leq k$, where the values of $s^{(2)},\dots,s^{(k-1)}$ and $t^{(1)},\dots,t^{(k-1)}$ are determined
by the values of $\W_l=y_la_lb_l$. Vice versa, given $X_{\e{1}}$ the values of
$s^{(2)},\dots,s^{(k-1)}$ and $t^{(1)},\dots,t^{(k-1)}$ determine uniquely the
cones $C(y_la_lb_l)$: indeed, $X_{\e{1}}$ and $t^{(1)}$ determine uniquely
$C(X_{\e{2}})$ and therefore also $C(\W_2)=C(y_2a_2b_2)$; inductively, given
$C(X_{\e{l}})$ of type $s^{(l)}$ then $t^{(l)}$ determines uniquely
$C(X_{\e{l+1}})$ and $C(\W_{l+1})=C(y_{l+1}a_{l+1}b_{l+1})$. We mark it by
$(\ast)$ when we make use of this ``transition''.
\par
Recall that the covering of $\calL$ consists of  $n_0$
subcones $C_i^{(0)}$, $1\leq i\leq n_0$. Each $C_i^{(0)}$ has again a covering
consisting of $n(\tau(C_i^{(0)}),j)$ subcones of type $j$. We enumerate all these
subcones of type $j$ by $C_{j,k}^{(1)}$ with $1\leq k\leq N_j:=\sum_{i=1}^{n_0}
n(\tau(C_i^{(0)}),j)$, that is, we enumerate all subcones of type $j$ which
appear in the coverings of all $C_i^{(0)}$, $1\leq i\leq n_0$.
% Furthermore, for any $w_1,w_2\in\calA^\ast_{\geq 2}$ we write $w_1\sim w_2$ if and only if $w_1\in \partial C(w_2)$, that
% is, if $w_2=yab$ with $y\in\calA^\ast$ and $ab\in\calA^2$ then
% $w$ can then be written as $w=ycd$ with $cd\in\calA^2$. In particular, $w_1$
% and $w_2$ lie then on the boundary of the same cone.
%
%
% , that is, $w\sim yab$ if $w$
% lies on the same boundary of a cone as $yab$ (namely the cone $C(yab)$).
\par
% Moreover, we have for all $j\in\calI$ and
% $w_1ab,w_2ab\in\bigcup_{i=1}^{N_j}\partial C_{j,k}^{(1)}$ that
% $\Prob[X_{\e{1}}=w_1ab]>0$ if and only if
% $\Prob[X_{\e{1}}=w_2ab]>0$. 
Since $\mathcal{W}_0$ is finite, there is some constant $c>0$ such that
$$
c\cdot \mathbb{P}[X_{\e{1}}=x] \leq \mathbb{P}[X_{\e{1}}=y] %\leq C\cdot \mathbb{P}[X_{\e{1}}=w_2]
$$
for all $x,y\in\bigcup_{k=1}^{N_j}\partial C_{j,k}^{(1)}\subseteq \mathrm{supp}(\Prob[X_{\e{1}}=\cdot])$.
\par
% \underline{Claim:} $-\frac{1}{k} \log \Prob\left[
% \begin{array}{c}
% X_{\e{1}}\in C(y_0y_1a_1b_1),\Y_1=(j,t^{(1)}),\\
% \Y_2=(s^{(2)},t^{(2)}),\dots,\Y_{k-1}=(s^{(k-1)},t^{(k-1)})
% \end{array} \right]\xrightarrow{k\to\infty} H(\Y) \textrm{ almost surely}.
% $
% \par
\underline{Claim:} for almost every realisation  $(x_1,\underline{y}_1,\underline{y}_2,\dots)$ of $(X_{\e{1}},\Y_1,\Y_2,\dots)$,
\begin{equation}\label{equ:H-equation-e1}
H(\Y)=\lim_{k\to\infty} -\frac{1}{k} \log \Prob\bigl[C(X_{\e{1}})=C(x_1),\Y_1=\underline{y}_1,\dots,\Y_k=\underline{y}_k \bigr].
\end{equation}
First, we have for $k\geq 2$:
\begin{eqnarray*}
&& N_j\cdot \Prob\bigl[X_{\e{1}}\in C(y_0y_1a_1b_1),\Y_1=(j,t^{(1)}),\Y_2=(s^{(2)},t^{(2)}),\dots,\Y_{k-1}=(s^{(k-1)},t^{(k-1)}) \bigr]
\nonumber\\
&\overset{(\ast)}{=}& N_j \cdot \sum_{x\in \partial C(y_0y_1a_1b_1)}
\sum_{\substack{w_2,\dots,w_k\in\mathcal{W}_0:\\ w_i\in \partial
    C(y_ia_ib_i)\\ \textrm{for all } 2\leq i\leq k}}
\Prob[X_{\e{1}}=x,X_{\e{2}}=y_0y_1w_2,\dots,X_{\e{k}}=y_0\dots y_{k-1}w_k]\\
&= & N_j \cdot \sum_{\substack{x\in \partial
    C(y_0y_1a_1b_1);\\ w_2,\dots,w_k\in\mathcal{W}_0:\\ w_i\in \partial C(y_ia_ib_i)\\ \textrm{for all } 2\leq i\leq k}}
%\sum_{\substack{w_2,\dots,w_k\in\mathcal{W}_0:\\ w_i\in \partial C(y_ia_ib_i)}}
\Prob[X_{\e{1}}=x] \Prob[X_{\e{2}}=y_0y_1w_2,\dots,X_{\e{k}}=y_0\dots
y_{k-1}w_k\mid X_{\e{1}}=x]\\
&=& N_j \cdot \sum_{x\in \partial C(y_0y_1a_1b_1)}
\sum_{\substack{w_2,\dots,w_k\in\mathcal{W}_0:\\ w_i\in \partial C(y_ia_ib_i)\\ \textrm{for all } 2\leq i\leq k}}
\Prob[X_{\e{1}}=x] q(y_1[x],w_2) \prod_{i=3}^k q(w_{i-1},w_i)\\
&=& \sum_{l=1}^{N_j}\sum_{x\in \partial C(y_0y_1a_1b_1)}
\sum_{\substack{w_2,\dots,w_k\in\mathcal{W}_0:\\ w_i\in \partial C(y_ia_ib_i)\\ \textrm{for all } 2\leq i\leq k}}
\Prob[X_{\e{1}}=x] \Prob\bigl[\W_2=w_2\bigl| [X_{\e{1}}]=[x]\bigr] \prod_{i=3}^k
q(w_{i-1},w_i).
\end{eqnarray*}
For a moment, let be $\partial
C(y_0y_1a_1b_1)=\{y_0y_1c_1d_1,\dots,y_0y_1c_\kappa d_\kappa\}$. Then for all
$l\in\{1,\dots,N_j\}$ there is some $v_l\in\calA^\ast$ such that
$\partial C_{j,l}^{(1)}=\{v_lc_1d_1,\dots,v_lc_\kappa d_\kappa\}$. Therefore, for
every $x\in\partial C(y_0y_1a_1b_1)$ and each $l\in\{1,\dots,N_j\}$ there is
exactly one $\hat x_l\in\partial C_{j,l}^{(1)}$ with $[\hat x_l]=[x]$, 
$\Prob[X_{\e{1}}=x]\geq c\cdot \Prob[X_{\e{1}}=\hat x_l]$ and
$\Prob\bigl[\W_2=w_2\bigl| [X_{\e{1}}]=[x]\bigr]=\Prob\bigl[\W_2=w_2\bigl|
[X_{\e{1}}]=[\hat x_l]\bigr]$ for all $w_2\in\mathcal{W}_0$. The last
equation follows from the fact that the probabilities depend on $X_{\e{1}}$ only by its last two letters $[X_{\e{1}}]$ in the condition.
We write $\hat
x_l$ for this mapping $(x,l)\mapsto \hat x_l$. Hence,
\begin{eqnarray}
&&N_j\cdot \Prob\left[
\begin{array}{c}
X_{\e{1}}\in C(y_0y_1a_1b_1),\Y_1=(j,t^{(1)}),\\
\Y_2=(s^{(2)},t^{(2)}),\dots,\Y_{k-1}=(s^{(k-1)},t^{(k-1)})
\end{array} \right]\nonumber\\
&\geq & \sum_{l=1}^{N_j} \sum_{x\in \partial C(y_0y_1a_1b_1)}
\sum_{\substack{w_2,\dots,w_k\in\mathcal{W}_0:\\ w_i\in\partial C(y_ia_ib_i)\\ \textrm{for all } 2\leq i\leq k}} 
c \Prob[X_{\e{1}}=\hat x_l]   \Prob\bigl[\W_2=w_2\mid [X_{\e{1}}]=[\hat x_1]\bigr] 
\prod_{i=3}^k q(w_{i-1},w_i)\nonumber\\
&= & \sum_{l=1}^{N_j} \sum_{w\in\partial C^{(1)}_{j,l}}
\sum_{\substack{w_2,\dots,w_k\in\mathcal{W}_0:\\ w_i\in\partial C(y_ia_ib_i)\\ \textrm{for all } 2\leq i\leq k}} 
c\cdot \Prob[X_{\e{1}}= w]  \cdot \Prob\bigl[\W_2=w_2\bigl| [X_{\e{1}}]= [w]\bigr] \cdot
\prod_{i=3}^k q(w_{i-1},w_i)\nonumber\\
&=& c \cdot
\Prob\bigl[\Y_1=(j,t^{(1)}),\Y_2=(s^{(2)},t^{(2)}),\dots,\Y_{k-1}=(s^{(k-1)},t^{(k-1)})
\bigr].\nonumber %\label{inequ:conetype1-1}
\end{eqnarray}
Vice versa, we obviously have
\begin{eqnarray}
&& \Prob\bigl[X_{\e{1}}\in C(y_0y_1a_1b_1),\Y_1=(j,t^{(1)}),\Y_2=(s^{(2)},t^{(2)}),\dots,\Y_{k-1}=(s^{(k-1)},t^{(k-1)}) \bigr]\nonumber\\
% && N_j \cdot \sum_{\substack{w_1\in\calL:\\ w_1\in \partial C(y_0y_1a_1b_1)}}
% \sum_{\substack{w_2,\dots,w_k\in\mathcal{W}_0:\\ w_i\in \partial C(y_ia_ib_i)}}
% \Prob[X_{\e{1}}=w_1,X_{\e{2}}=y_0y_1w_1,\dots,X_{\e{k}}=y_0\dots
% y_{k-1}w_k]\nonumber\\
%&\geq & 
% &&N_j\cdot \sum_{\substack{w_1\in\calL:\\ w_1\sim y_0y_1a_1b_1}}
% \sum_{\substack{w_2,\dots,w_k\in\mathcal{W}_0:\\ w_i\sim y_ia_ib_i}}
% \Prob[X_{\e{1}}=w_1] q(y_1[w_1],w_2)\cdot
% q(w_2,w_3)\cdot \ldots \cdot q(w_{k-1},w_k)\nonumber\\
&\leq &
\Prob\bigl[\Y_1=(j,t^{(1)}),\Y_2=(s^{(2)},t^{(2)}),\dots,\Y_{k-1}=(s^{(k-1)},t^{(k-1)})
\bigr].\nonumber
%\label{inequ:conetype1-2}
\end{eqnarray}
This proves the claim (\ref{equ:H-equation-e1}).
\par
Recall from Equation (\ref{equ:G-L}) that $G(o,w|1)=G(o,o|1)L(o,w|1)$ for all $w\in\calL$ and that
$\xi(\cdot)$ can only take finitely many (non-zero) values. 
%Writing $X_{\e{1}}=x_1\dots x_n$ and $j=\tau(C(X_{\e{1}}))$, 
We now can conclude as follows:
\begin{eqnarray*}
&&\lim_{k\to\infty} \frac{\hat l(X_{\e{k}})}{k}= \lim_{k\to\infty}
-\frac{1}{k}\log \sum_{w'\in \partial C(y_0y_1\dots y_ka_kb_k)} L(o,w'|1) \\
&=&
\lim_{k\to\infty} -\frac{1}{k} \log \sum_{bc\in\calA^2: bc\in\partial
  C(a_kb_k)} L(o,y_0y_1\dots y_{k}bc|1)\\
&=& \lim_{k\to\infty} -\frac{1}{k} \log \biggl[ \sum_{w_1\in \partial C(y_0y_1a_1b_1)}\sum_{\substack{w_2,\dots,w_k\in\mathcal{W}_0: \\
    w_i\in\partial C(y_ia_ib_i)\\ \textrm{for all } 2\leq i\leq k }} \sum_{\substack{w'\in\calL: \\ w'\notin C(w_1)}} L(o,w'|1)
p(w',w_1) \prod_{i=2}^{k} \mathds{L}([w_{i-1}],w_i)\biggr]\\
&=& \lim_{k\to\infty} -\frac{1}{k} \log \biggl[ \sum_{\substack{w_1\in\partial C(y_0y_1a_1b_1);\\
w_2,\dots,w_k\in\mathcal{W}_0: \\
    w_i\in\partial C(y_ia_ib_i)\\ \textrm{for all } 2\leq i\leq k;\\
w'\in\calL \setminus C(w_1)}} 
%\sum_{\substack{w'\in\calL: \\ w'\notin C(w_1)}}
G(o,w'|1)p(w',w_1)\xi([w_1])  \cdot \prod_{i=2}^k
\frac{\xi([w_i])}{\xi([w_{i-1}])}\mathds{L}([w_{i-1}],w_i)\biggr]
\end{eqnarray*}
\begin{eqnarray*}
&=&\lim_{k\to\infty} -\frac{1}{k} \log \biggl[ \sum_{w_1\in\partial C(y_0y_1a_1b_1)} \sum_{\substack{w_2,\dots,w_k\in\mathcal{W}_0: \\
    w_i\in\partial C(y_ia_ib_i)\\ \textrm{for all } 2\leq i\leq k}}
\Prob[X_{\e{1}}=w_1] q(y_1[w_1],w_2) \prod_{i=3}^k q(w_{i-1},w_i)\biggr]\\
% &=&\lim_{k\to\infty} -\frac{1}{k} \log \biggl[ N_{j}
% \sum_{\substack{w_1\in\calL:\\ w_1\in\partial C(y_0y_1a_1b_1)}} \sum_{\substack{w_2,\dots,w_k\in\mathcal{W}_0: \\
%     w_i\in\partial C(y_ia_ib_i)\\ \textrm{for all } 2\leq i\leq k}}
% \Prob[X_{\e{1}}=w_1] q(y_1[w_1],w_2) \prod_{i=3}^k q(w_{i-1},w_i)\biggr]\\
&=& \lim_{k\to\infty} -\frac{1}{k} \log  \Prob\biggl[
\begin{array}{c}
X_{\e{1}}\in C(y_0y_1a_1b_1),\Y_1=(j,t^{(1)}),\\
\Y_2=(s^{(2)},t^{(2)}),\dots,\Y_{k-1}=(s^{(k-1)},t^{(k-1)})
\end{array} \biggr]\\
% && N_j \cdot \sum_{\substack{w_1\in\calL:\\ w_1\in \partial C(y_0y_1a_1b_1)}}
% \sum_{\substack{w_2,\dots,w_k\in\mathcal{W}_0:\\ w_i\in \partial C(y_ia_ib_i)}}
% \Prob[X_{\e{1}}=w_1,X_{\e{2}}=y_0y_1w_1,\dots,X_{\e{k}}=y_0\dots
% y_{k-1}w_k]\nonumber\\
&=& \lim_{k\to\infty} -\frac{1}{k} \log
\Prob\bigl[\Y_1=(j,t^{(1)}),\Y_2=(s^{(2)},t^{(2)}),\dots,\Y_{k-1}=(s^{(k-1)},t^{(k-1)})\bigr] = H(\Y).
\end{eqnarray*}
The last equation follows from (\ref{equ:H-equation-e1}). %\ref{inequ:conetype1-1}) and
%(\ref{inequ:conetype1-2}). 
We remark that  the first coordinate
of $\Y_1$ describes only the cone type of $X_{\e{1}}$ but
there may be several distinct cones of the same type $j\in\mathcal{I}$ with $j=\tau(C(X_{\e{1}}))$.
%
%
%
% Write $X_{\e{k}}=x_1\dots x_n=y_1\dots y_k$ and assume that $X_{\e{k}}$ is on the boundary
% of the cone $C_k$ with $\tau(C_k)=i_k$ and $\W_j$ has the form $y_ja_jb_j$ for
% some $a_jb_j\in \calA^2$. Recall that $G(o,abc)=G(o,o)L(o,abc)$ and that
% $\xi(abc)$ is a probability bounded away from zero. Write $\Y_j=(i_j,i_{j+1})$. Observe that, for
% $\l_0\in\{1,\dots,n_i^{(0)}\}$ and every $k\in\N$,
%
%
% Moreover, we have for all
% $i\in\mathcal{I},x\in\bigcup_{l=1}^{n_i^{(0)}} \partial C_{i,l}^{(0)}$ that
% $\mathbb{P}[X_{\e{0}}=x]>0$. Therefore, there are constants $c,C>0$ such that
% $$
% c\cdot \mathbb{P}[X_{\e{0}}=y] \leq \mathbb{P}[X_{\e{0}}=x] \leq C\cdot \mathbb{P}[X_{\e{0}}=y]
% $$
% for all $x,y\in\bigcup_{l=1}^{n_i^{(0)}} \partial C_{i,l}^{(0)}$. Denote by
% $C_j$ the cone in $\calA^\ast$ such that $y_1\dots y_j$ is on the boundary of
% $C_j$. Denote by $(i_1,i_2),\dots,(i_{k-1},i_k)$ the values of $\Y_1,\dots
% \Y_{k-1}$ on the event $[\W{1}=y_1,\dots,\W{k}=y_k]$. 
% Then we can conclude as follows:
\end{proof}
Recall the definition of $l(w)= -\log L(o,w|1)$
for $w\in\calL$.
\begin{Cor}\label{cor:l-limit}
$$
\lim_{k\to\infty} \frac{l(X_{\e{k}})}{k}=H(\Y)\quad \textrm{almost surely}.
$$
\end{Cor}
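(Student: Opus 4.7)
The plan is to deduce the corollary from Proposition \ref{prop:hat-l-convergence} by showing that the difference $|l(X_{\e{k}}) - \hat l(X_{\e{k}})|$ is uniformly bounded, so that dividing by $k$ kills it in the limit.

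First I would establish the easy inequality $\hat l(X_{\e{k}}) \leq l(X_{\e{k}})$. By construction, $X_{\e{k}}$ lies on the boundary of the cone it just finally entered, so $X_{\e{k}} \in \partial C(X_{\e{k}})$ (weak symmetry guarantees the backward edge to $X_{\e{k}-1} \notin C(X_{\e{k}})$). Therefore $L(o, X_{\e{k}}|1)$ appears as one term in the sum defining $\sum_{w' \in \partial C(X_{\e{k}})} L(o,w'|1)$, whence $L(o,X_{\e{k}}|1) \leq \sum_{w'\in \partial C(X_{\e{k}})} L(o,w'|1)$ and taking $-\log$ gives the bound.

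For the reverse inequality, I would show that there is a constant $c>0$, independent of $k$, such that for every $w' \in \partial C(X_{\e{k}})$,
$$
L(o,X_{\e{k}}|1) \;\geq\; c\cdot L(o,w'|1).
$$
The idea: concatenate an arbitrary path from $o$ to $w'$ avoiding $o$ with a fixed path $\sigma_{w',X_{\e{k}}}$ from $w'$ to $X_{\e{k}}$ staying inside $C(X_{\e{k}})$. Such a path exists because $C(X_{\e{k}})$ is connected (as a subgraph of $\mathcal{G}$, using weak symmetry), and it never visits $o$ since all words in the cone have length $\geq |X_{\e{k}}|\geq 2$. Distinct paths from $o$ to $w'$ give distinct concatenations, so the bound follows with $c$ equal to the probability of $\sigma_{w',X_{\e{k}}}$. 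By the cone isomorphism Lemma \ref{lemma:cone-isomorphism}, this probability depends only on the type $\tau(C(X_{\e{k}})) \in \mathcal{J}$ and on the last two letters $[w'], [X_{\e{k}}]$; since $\mathcal{J}$ and the set of boundary-last-letter pairs are finite, we can choose $c>0$ uniform in $k$.

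Combining with the uniform upper bound $M := \max_{w\in\calL, |w|\geq 2} |\partial C(w)| < \infty$ (again finite by cone types), summing the inequality $L(o,X_{\e{k}}|1) \geq c\cdot L(o,w'|1)$ over $w' \in \partial C(X_{\e{k}})$ yields
$$
L(o,X_{\e{k}}|1) \;\geq\; \frac{c}{M} \sum_{w'\in\partial C(X_{\e{k}})} L(o,w'|1),
$$
so $l(X_{\e{k}}) \leq \hat l(X_{\e{k}}) + \log(M/c)$. Together with the easy direction,
$$
\hat l(X_{\e{k}}) \;\leq\; l(X_{\e{k}}) \;\leq\; \hat l(X_{\e{k}}) + \log(M/c),
$$
and dividing by $k$ and invoking Proposition \ref{prop:hat-l-convergence} gives the claim. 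The only delicate step is to justify the uniform positive lower bound $c$ on the path probabilities $\sigma_{w',X_{\e{k}}}$; this is exactly where cone isomorphism (hence weak symmetry and finiteness of $\mathcal{J}$) is essential.
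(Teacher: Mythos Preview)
Your proof is correct and follows essentially the same route as the paper: both show that $l(X_{\e{k}})$ and $\hat l(X_{\e{k}})$ differ by a bounded amount by extending paths from $o$ to a boundary point $w'\in\partial C(X_{\e{k}})$ by a fixed path inside the cone to $X_{\e{k}}$, whose probability is uniformly bounded below because it depends only on $[w']$ and $[X_{\e{k}}]$. The only cosmetic difference is that you bound each $L(o,w'|1)$ separately and sum using $M=\max|\partial C(w)|$, whereas the paper sums the concatenations directly and handles the overcounting with the cruder bound $|\calA^2|$; the underlying idea is identical.
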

\begin{proof}
It suffices to compare $\hat l(X_{\e{k}})$ with $l(X_{\e{k}})$. Assume
for a moment that $X_{\e{k}}=w_k$ with $w_k\in\calL$ and that $X_{\e{k}}$ is on the
boundary of the cone $C_k$. Then, the probability of walking \textit{inside} $C_k$
from any $w'\in \partial C_k$ to any $w-k\in\partial C_k$ (or vice versa) can
be bounded from below by some constant $\varepsilon_0$, because the
probabilities depend only on $[w_k],[w']\in\calA^2$: that is,
$$
\Prob_{w'}[\exists n\in\N: X_n=w_k, \forall m\leq n: X_n\in C(w')]\geq \varepsilon_0.
$$
Therefore,
\begin{eqnarray*}
L(o,X_{\e{k}}|1) &\leq& \sum_{w'\in\partial C_k} L(o,w'|1)=\hat l(X_{\e{k}}),\\
\hat l(X_{\e{k}})\cdot \varepsilon_0 & \leq & \sum_{w'\in\partial C_k} L(o,w'|1)
\cdot \Prob_{w'}[\exists n\in\N: X_n=w_k, \forall m\leq n: X_n\in C(w')]\\
 &\leq & |\calA^2|\cdot L(o,X_{\e{k}}|1).
\end{eqnarray*}
In the second inequality chain we extended paths from $o$ to $w'$ to paths from $o$ to
$w_k$ via $w'$ such that each such path is counted at most $|\calA^2|$ times.
Taking logarithms, dividing by $k$ and letting $k$ tend to infinity yields the claim.
\end{proof}
Now we come to an important law of large numbers. 
%For this purpose, define $d(x,y)=|y|-|x|$ for $x,y\in\calA^\ast$ with $|x|\leq |y|$, where $|\cdot|$ is
%the natural word length. 
Denote by $\nu_0$ the invariant probabilty measure of
the positive recurrent Markov chain 
$(\mathbf{W}_k)_{k\in\mathbb{N}}$ and define
\begin{equation}\label{equ:lambda}
%\lambda:=\sum_{w_1,w_2\in\mathcal{W}_0} \nu_0(w_1) \cdot
%q(w_1,w_2) \cdot \bigl(|w_2|-2\bigr).
\lambda:=  \mathbb{E}[|\W_1^{(\nu)}|]-2=\sum_{w\in\mathcal{W}_0} \nu_0(w) \cdot \bigl(|w|-2\bigr).
\end{equation}
Then:
\begin{Prop}\label{prop:L-limit}
$$
\lim_{k\to\infty} \frac{l(X_n)}{n}=\ell\cdot \lambda^{-1} \cdot H(\Y) \quad
\textrm{almost surely}.
$$
\end{Prop}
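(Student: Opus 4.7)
The plan is to bootstrap from Corollary \ref{cor:l-limit}, which provides convergence along the subsequence $(\e{k})_{k\in\N}$, to the full convergence along $n$. Two auxiliary ingredients are needed: an ergodic-theoretic identification of the asymptotic ratio between $k$ and $n$ when $\e{k}\leq n<\e{k+1}$, and a Lipschitz-type bound for $l(\cdot)$ along trajectories of the walk.

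For the first ingredient, I would apply the ergodic theorem to the positive recurrent, aperiodic Markov chain $(\W_k)_{k\in\N}$ (Lemma \ref{lemma:aperiodic}) with stationary distribution $\nu_0$, obtaining
$$
\frac{1}{k}\sum_{i=1}^{k}\bigl(|\W_i|-2\bigr)\longrightarrow \lambda \quad \textrm{almost surely}.
$$
By construction, $|X_{\e{k}}|-|X_{\e{k-1}}|=|\W_k|-2$, so $|X_{\e{k}}|/k\to\lambda$ almost surely. Combining this with the rate-of-escape result $|X_n|/n\to\ell$ (with $\ell>0$ by transience), one deduces $\e{k}/k\to\lambda/\ell$, hence $\e{k+1}/\e{k}\to 1$, and consequently $k(n)/n\to\ell/\lambda$ and $(\e{k(n)+1}-\e{k(n)})/n\to 0$ almost surely, where $k(n)$ denotes the unique index with $\e{k(n)}\leq n<\e{k(n)+1}$.

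For the second ingredient, I would use weak symmetry (Assumption \ref{ass:weak}) to obtain a uniform Lipschitz bound for $l$. Since the transition probabilities depend only on the last two letters and the replacement, only finitely many distinct strictly positive values occur; let $p_{\min}>0$ be their minimum. From the elementary inequality $G(o,w'|1)\geq p(w,w')\,G(o,w|1)$ (obtained by appending the edge $w\to w'$ to paths from $o$ to $w$), weak symmetry applied to the reverse edge, and the identity $l(\cdot)=-\log G(o,\cdot|1)+\log G(o,o|1)$ derived from (\ref{equ:G-L}), one gets $|l(w)-l(w')|\leq -\log p_{\min}=:C$ for every edge $\{w,w'\}$ of $\mathcal{G}$. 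Iterating along the actual trajectory yields $|l(X_n)-l(X_{\e{k(n)}})|\leq C\bigl(\e{k(n)+1}-\e{k(n)}\bigr)$, which is $o(n)$ by the previous paragraph.

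The conclusion then follows from the splitting
$$
\frac{l(X_n)}{n}=\frac{l(X_{\e{k(n)}})}{k(n)}\cdot\frac{k(n)}{n}+\frac{l(X_n)-l(X_{\e{k(n)}})}{n},
$$
whose first summand converges to $H(\mathbf{Y})\cdot\ell/\lambda$ by Corollary \ref{cor:l-limit} together with $k(n)/n\to\ell/\lambda$, while the second vanishes by the Lipschitz estimate. I expect the main obstacle to be controlling the excursion lengths $\e{k+1}-\e{k}$ relative to $n$: the Lipschitz bound only gives $O(\e{k+1}-\e{k})$, which a priori might be large. It is precisely the quantitative ergodic statement $\e{k}/k\to\lambda/\ell$ that rescues the argument, forcing $\e{k+1}/\e{k}\to 1$ and hence the needed negligibility $(\e{k+1}-\e{k})/n\to 0$.
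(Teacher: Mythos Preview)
Your proposal is correct and follows essentially the same approach as the paper: both proofs combine Corollary~\ref{cor:l-limit}, the ergodic theorem applied to $(|\W_k|-2)$ to identify $\lambda$, the rate of escape $|X_n|/n\to\ell$, and the Lipschitz bound $|l(w)-l(w')|\leq -\log p_{\min}$ derived from weak symmetry. Your organisation is in fact slightly more economical than the paper's, since you deduce $\e{k}/k\to\lambda/\ell$ directly from $|X_{\e{k}}|/k\to\lambda$ and $|X_{\e{k}}|/\e{k}\to\ell$ (the latter being just a subsequence of $|X_n|/n\to\ell$), whereas the paper introduces auxiliary level-based last entry times $\hat{\mathbf e}_k$ and a four-factor decomposition to reach the same conclusion.
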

\begin{proof}
Define
$$
\he{k} := \inf\bigl\lbrace m\in\N \bigl| \forall n\geq m: |X_n|=k\bigr\rbrace.
$$
Observe that
$\he{k}-1=\sup \bigl\lbrace m\in\N \bigl|  |X_m|=k-1\bigr\rbrace$. Transience yields $\he{k}<\infty$ almost surely for all $k\in\N$. By
\cite[Proposition 2.3]{gilch:08}, $k/(\he{k}-1)$ tends to the rate of escape $\ell$ as
$k\to\infty$; hence, $k/\he{k}\to\ell$ as $k\to\infty$.
Define the \textit{maximal last entry times} at time $n\in\N$ as
\begin{eqnarray*}
\mathbf{k}(n) &:= & \max\{ k\in\N \mid \he{k}\leq n\},\\
\mathbf{t}(n) &:= & \max\{k\in\N \mid \e{k}\leq n\}.
\end{eqnarray*}
Obviously, $\mathbf{k}(n)\geq \mathbf{t}(n)$ and each last entry time $\e{k}$
corresponds (depending on the concrete realization) to exactly one $\he{l}$ with
$l\geq k$. First, we rewrite
\begin{equation}\label{equ:l/n}
\frac{l(X_n)}{n} = \frac{ l(X_n)-
  l(X_{\mathbf{e}_{\mathbf{t}(n)}})}{n}+
\frac{l(X_{\mathbf{e}_{\mathbf{t}(n)}})}{\mathbf{t}(n)}\cdot \frac{\mathbf{t}(n)}{\mathbf{k}(n)}\cdot \frac{\mathbf{k}(n)}{\he{\mathbf{k}(n)}}\cdot \frac{\he{\mathbf{k}(n)}}{n}.
\end{equation}
Let $\varepsilon_1$ be the minimal occuring positive single-step transition
probability. Define
$$
D:=\max\left\lbrace
|w_2|-|w_1| \biggl| \begin{array}{c}
\exists ab\in\calA^2: C(ab) \textrm{ has covering } C_1,\dots,C_{n(ab)},\\
w_1\in\partial C(ab),w_2\in \bigcup_{i=1}^{n(ab)} \partial C_i
\end{array}
\right\rbrace<\infty.
$$
Then we have $\hat{\mathbf{e}}_{\mathbf{k}(n)}\geq \e{\mathbf{t}(n)}\geq
\hat{\mathbf{e}}_{\mathbf{k}(n)-D}$ and $n/\e{\mathbf{t}(n)}\geq 1$.
%for some suitable $D\in\N$.
This implies 
\begin{equation}\label{equ:exit-n}
1\leq \frac{n}{\e{\mathbf{t}(n)}}\leq
\frac{\he{\mathbf{k}(n)+1}}{\he{\mathbf{k}(n)-D}}=
\frac{\he{\mathbf{k}(n)+1}}{\mathbf{k}(n)}\frac{\mathbf{k}(n)-D}{\he{\mathbf{k}(n)-D}}\xrightarrow{n\to\infty}
\frac{1}{\ell}\cdot \ell =1 \quad \textrm{a.s.,}
\end{equation}
which in turn yields $(n-\e{\mathbf{t}(n)})/n\to 0$ as $n\to\infty$. Thus, the first quotient on the right hand side of (\ref{equ:l/n}) tends to zero since
\begin{eqnarray*}
L(o,X_n|1) \cdot \varepsilon_1^{n-\e{\mathbf{t}(n)}} &\leq &
L(o,X_{\mathbf{e}_{\mathbf{t}(n)}}|1)  \quad \textrm{(due to weak symmetry)},\\
L(o,X_{\mathbf{e}_{\mathbf{t}(n)}}|1)\cdot \varepsilon_1^{n-\e{\mathbf{t}(n)}} & \leq & L(o,X_n|1).
\end{eqnarray*}
Here we used the fact that one can walk from $X_{\mathbf{e}_{\mathbf{t}(n)}}$
to $X_n$ (or vice versa) in $n-\e{\mathbf{t}(n)}$ steps.
% and due to (follows completely analogously as in \cite[Proof of Theorem D]{woess2})
% $$
% \frac{\e{\mathbf{t}(n)}}{n}\leq
% \frac{\he{\mathbf{k}(n)}}{n}\xrightarrow{n\to\infty} 1,\quad
% \frac{\e{\mathbf{t}(n)}}{n}\geq
% \frac{\he{\mathbf{k}(n)-D}}{n}\xrightarrow{n\to\infty} 1,
% $$
% which in turn yields $(n-\e{\mathbf{t}(n)})/n\to 0$ as $n\to\infty$.
%\par
By Corollary \ref{cor:l-limit},
$l(X_{\mathbf{e}_{\mathbf{t}(n)}})/\mathbf{t}(n)$ tends to $H(\Y)$. 
On the other hand side, $\he{k}/k$
tends almost surely to $1/\ell$ and $\he{\mathbf{k}(n)}/n$ tends to $1$ almost
surely since $1\leq n/\he{\mathbf{k}(n)}\leq n/\e{\mathbf{t}(n)}\to 1$ by (\ref{equ:exit-n}).
It remains to investigate the limit
$\lim_{k\to\infty} \mathbf{k}(n)/\mathbf{t}(n)$. Clearly, 
$$
\frac{\mathbf{k}(n)}{\mathbf{t}(n)}=
\frac{|X_{\he{\mathbf{k}(n)}}|}{\mathbf{t}(n)}
=
\frac{1}{\mathbf{t}(n)}\Bigl(|X_{\e{1}}|+\sum_{i=1}^{\mathbf{t}(n)-1}
(|X_{\e{i+1}}|-|X_{\e{i}}|)+(|X_{\he{\mathbf{k}(n)}}|-|X_{\e{\mathbf{t}(n)}}|)\Bigr).
$$
Note that $0\leq |X_{\he{\mathbf{k}(n)}}|-|X_{\e{\mathbf{t}(n)}}|\leq D$ and
$0<|X_{\e{1}}|\leq D_1$ almost surely for some suitable constant $D_1$. Thus, it is sufficient to consider
$$
\frac{1}{k} \sum_{i=1}^k (|X_{\e{i+1}}|-|X_{\e{i}}|)= \frac{1}{k} \sum_{i=1}^k \bigl(|\W_i|-2\bigr).
$$
Since $(\W_k)_{k\in\N}$ is positive recurrent, the ergodic theorem yields almost surely
$$
\lim_{k\to\infty} \frac{1}{k} \sum_{i=1}^k \bigl(|\W_i|-2\bigr)
=\sum_{w\in\mathcal{W}_0} \nu_0(w) \bigl(|w|-2\bigr) =\lambda.
$$
This finishes the proof and gives the proposed formula.
\end{proof}

\section{Existence of Entropy}
\label{sec:entropy}

% \begin{Lemma} Lemma bzgl. Green-Metrik
% $$
% \lim_{k\to\infty} -\frac{1}{k}\int \log
% \Prob[\W{k}=\W{k}(\omega)]\,d\Prob(\omega) = H(Y).
% $$
% \end{Lemma}
% \begin{proof}
% Observe that $\W{k}=w\in\calA^{k+2}$ determines the values of
% $\widehat{\mathbf{W}}_1,\dots, \widehat{\mathbf{W}}_k$ uniquely, and vice versa. Hence,
% \begin{eqnarray*}
% &&\int-\frac{1}{k}\log \Prob[\W{k}=\W{k}(\omega)]\,d\Prob(\omega) \\
% &=& 
% \int-\frac{1}{k}\log \Prob[\widetilde{\mathbf{W}}_1[1]=w_1,\dots,\widetilde{\mathbf{W}}_{k}[1]=w_{k}
% \widetilde{\mathbf{W}}_k=w_kbc]\,d\Prob(w_1\dots w_kbc) \\
% &=&
% \frac{1}{k}H(\widetilde{\mathbf{W}}_1[1],\dots,\widetilde{\mathbf{W}}_{k}[1],\widetilde{\mathbf{W}}_k)\\
% &=& \underbrace{\frac{1}{k}
% H(\widetilde{\mathbf{W}}_1[1],\dots,\widetilde{\mathbf{W}}_k[1])}_{\to H(Y)}+\frac{1}{k}
% H(\widetilde{\mathbf{W}}_k\mid \widetilde{\mathbf{W}}_1[1],\dots,\widetilde{\mathbf{W}}_k[1]).
% \end{eqnarray*}
% We have
% $$
% 0\leq H(\widetilde{\mathbf{W}}_k\mid
% \widetilde{\mathbf{W}}_1[1],\dots,\widetilde{\mathbf{W}}_k[1])
% \leq H(\widetilde{\mathbf{W}}_k),
% $$
% where $H(\widetilde{\mathbf{W}}_k)$ is a finite sum over the index set
% $\calA^3$, and consequently uniformly bounded for all $k\in\N$. The claim
% follows with Corollary \ref{cor:int=H}.
% \end{proof}

We now link Proposition \ref{prop:L-limit} with the asymptotic entropy of the
random walk $(X_n)_{n\in\N_0}$. For this purpose, we follow the reasoning of \cite{gilch:11}.
First, we need the following lemma:
\begin{Lemma}\label{lemma:R>1}
There is $R>1$ such that $G(w_1,w_2|R)<\infty$ for all $w_1,w_2\in\calL$.
\end{Lemma}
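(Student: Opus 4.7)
The plan is to prove the claim by extending the generating functions constructed in Section \ref{sec:genfun} analytically from $z=1$ to some $z=R>1$, and then deducing finiteness of $G(w_1,w_2|R)$ from the basic structural decompositions (\ref{equ:G-L}), (\ref{equ:L-formel}) and (\ref{equ:L-expansion}). Because the random walk is irreducible on $\calL$ by weak symmetry, it will actually suffice to establish $G(o,o|R)<\infty$: if $n_1,n_2\in\N$ satisfy $p^{(n_1)}(o,w_1)>0$ and $p^{(n_2)}(w_2,o)>0$, then the Chapman--Kolmogorov estimate
$$
p^{(n)}(w_1,w_2)\,\leq\,\frac{p^{(n+n_1+n_2)}(o,o)}{p^{(n_1)}(o,w_1)\,p^{(n_2)}(w_2,o)}
$$
gives $G(w_1,w_2|R)\leq c_{w_1,w_2}\,G(o,o|R)$ for a finite constant $c_{w_1,w_2}$.

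First I would treat the quadratic system \eqref{h-equations}. At $z=1$ the values $H(ab,c|1)$ are probabilities and transience together with suffix-irreducibility yields $\sum_c H(ab,c|1)=1-\xi(ab)<1$ for every relevant $ab$. Writing the system as $F(z,H)=0$ with $F$ polynomial, its Jacobian in $H$ at $(1,H(\cdot,\cdot|1))$ has the form $I-J$, where $J$ is a non-negative matrix whose entries are the partial derivatives of the right-hand side of \eqref{h-equations}. The strict substochasticity induced by $\xi(ab)>0$ forces the spectral radius of $J$ to be strictly less than $1$, so $I-J$ is invertible. The implicit function theorem then produces a real-analytic extension $z\mapsto H(ab,c|z)$ to some interval $[1,R_1]$ with $R_1>1$, and by continuity $\sum_c H(ab,c|R_1)<1$ still holds.

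Next I would plug this into the linear system for $\overline{G}(ab,cd|z)$: its matrix has the form $I-A(z)$ with coefficients polynomial in $z$ and in $H(\cdot,\cdot|z)$; at $z=1$ its row sums are bounded by $1-\xi(\cdot)<1$, hence $I-A(z)$ remains invertible and $\overline{G}(ab,cd|z)$ extends analytically on some $[1,R_2]$ with $R_2\in(1,R_1]$. By \eqref{equ:L-formel} the same holds for $\bar L(ab,cde|z)$. A first-return decomposition at $o$ gives
$$
G(o,o|z)=\frac{1}{1-U(o,o|z)},
$$
where $U(o,o|z)$ admits a finite expansion in terms of one-step transition probabilities, $H(\cdot,\cdot|z)$ and $\overline{G}(\cdot,\cdot|z)$; by the preceding analytic extensions, $U(o,o|\cdot)$ is continuous at $1$, and since $U(o,o|1)<1$ by transience, $U(o,o|R)<1$ for some $R\in(1,R_2]$. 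Hence $G(o,o|R)<\infty$, and the irreducibility bound from the first paragraph finishes the proof.

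The main obstacle is establishing $\rho(J)<1$ for the Jacobian in step one. The matrix $J$ mixes a linear contribution (coming from the $\sum_{de}p(ab,de)\,z\,H(de,c|z)$ term) with two quadratic contributions (coming from $\sum_{def}p(ab,def)\,z\,\sum_g H(ef,g|z)H(dg,c|z)$), and a clean calculation showing $\rho(J)<1$ requires exploiting the strict defect $\xi(ab)>0$ carefully; everything else in the argument is then either a standard invertibility-by-continuity statement or a direct application of the decompositions already recorded in Section \ref{sec:genfun}.
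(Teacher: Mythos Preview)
Your approach is correct and is precisely the argument the paper defers to: its proof simply cites \cite[Proposition 8.2]{lalley} and invokes irreducibility to make the radius uniform, and Lalley's argument proceeds exactly via the implicit function theorem on the quadratic system \eqref{h-equations}, then propagates analyticity through the finite linear systems and the first-return decomposition, as you outline. One minor imprecision worth fixing: the row sums of $A(1)$ in the $\overline{G}$-system are not literally bounded by $1-\xi(\cdot)$ but equal $1-\sum_c p(ab,c)-\sum_{gde}p(ab,gde)\,\xi(de)$, which can be $1$ for some $ab$ transitioning only within $\calA^2$; nonetheless $\rho(A(1))<1$ still follows by Perron--Frobenius, since under suffix-irreducibility the induced substochastic chain on $\calA^2$ has positive killing probability from every communicating class.
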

\begin{proof}
A simple adaption of the proof of \cite[Proposition 8.2]{lalley} shows that,
for $w_1,w_2\in\calL$,
$G(w_1,w_2|z)$ has radius of convergence $R(w_1,w_2)>1$. At this point we also need
the suffix-irreducibility Assumption \ref{ass:suffix}; see Subsection
\ref{subsec:remarks-1} for a comment on how to weaken this assumption. 
Since we assume the random walk $(X_n)_{n\in\N_0}$ to be irreducible, the
radius of convergence is independent from $w_1$ and $w_2$; hence,
$G(w_1,w_2|R)<\infty$ for all $w_1,w_2\in\calL$ and $R=R(w_1,w_2)$.
\end{proof}
Let us remark that we have also $\bar L(ab,cde|R)<\infty$,
$\overline{G}(ab,cd|R)<\infty$ and $L(o,a|R)<\infty$ for all $a,b,c,d,e\in
\calA$, since these generating functions are dominanted by Green functions.
In the following let be $\varrho\in[1,R)$.
%, where $R$ is the number from the
%previous Lemma \ref{lemma:R>1}. 
\begin{Lemma}\label{lemma:prob-bound}
There are constants $D_1$ and $D_2>0$ such that for all $m,n\in\N_0$
$$
p^{(m)}(o,X_n) \leq D_1\cdot  D_2^n \cdot \varrho^{-m}.
$$
\end{Lemma}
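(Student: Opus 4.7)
The plan is to deduce the bound from the Green function at the point $\varrho$. Since $p^{(m)}(o,X_n)\,\varrho^m$ is a single nonnegative term of the power series $G(o,X_n|\varrho)=\sum_{m\geq 0} p^{(m)}(o,X_n)\varrho^m$, we immediately have
$$
p^{(m)}(o,X_n)\leq G(o,X_n|\varrho)\cdot \varrho^{-m}.
$$
So the whole task is to bound $G(o,X_n|\varrho)$ deterministically by a function of $n$ of the form $D_1\cdot D_2^n$. First I would apply the factorisation (\ref{equ:G-L}) at $z=\varrho$ to write $G(o,X_n|\varrho)=G(o,o|\varrho)\cdot L(o,X_n|\varrho)$; by Lemma \ref{lemma:R>1} the factor $G(o,o|\varrho)$ is a finite constant that can be absorbed into $D_1$, reducing the problem to an upper bound of the form $L(o,X_n|\varrho)\leq \widetilde D\cdot D_2^n$.

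Next I would estimate $L(o,w|\varrho)$ uniformly in $w\in\calL$ via the decomposition (\ref{equ:L-expansion}). For $w=a_1\dots a_k$ with $k\geq 2$ that formula expresses $L(o,w|\varrho)$ as a sum of at most $|\calA|^{2(k-1)+1}$ products, each consisting of one factor of the form $L(o,b|\varrho)$, one factor $\overline G(\cdot,\cdot|\varrho)$, and $k-2$ factors of the form $\bar L(\cdot,\cdot|\varrho)$. Since $\bar L$ and $\overline G$ are dominated by the corresponding Green functions, Lemma \ref{lemma:R>1} gives that each such factor is finite; as there are only finitely many values these factors can take (they are indexed by elements of $\calA^2$ and $\calA^3$), we can uniformly bound all of them by a single constant $M\geq 1$. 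Consequently
$$
L(o,w|\varrho)\leq |\calA|\cdot M \cdot \bigl(|\calA|^2 M\bigr)^{|w|}=:\widetilde D\cdot C^{|w|}
$$
for all $w\in\calL$ with $|w|\geq 2$, with the trivial cases $|w|\in\{0,1\}$ absorbed by enlarging the constants.

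To close the argument I would observe that at each step of the random walk the word length changes by at most one (looking at (\ref{equ:random-walk}), the most a single transition can do is replace two letters by three, or one letter by two), hence $|X_n|\leq n$ deterministically. Plugging into the bound on $L$ yields
$$
L(o,X_n|\varrho)\leq \widetilde D \cdot C^n,
$$
and combining everything gives the required estimate with $D_1=G(o,o|\varrho)\cdot \widetilde D$ and $D_2=C$. The main obstacle is step three, namely verifying that all the generating functions $\bar L(ab,cde|\varrho)$, $\overline G(ab,cd|\varrho)$, $L(o,a|\varrho)$ are finite at $\varrho<R$; but this follows from Lemma \ref{lemma:R>1} together with the remark immediately after it, since each of these series is dominated by a Green function of the random walk. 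Everything else is routine bookkeeping of uniform finite bounds.
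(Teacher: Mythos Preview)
Your proof is correct and follows essentially the same route as the paper: bound $p^{(m)}(o,w)$ by $G(o,w|\varrho)\varrho^{-m}$, then use \eqref{equ:G-L} and the expansion \eqref{equ:L-expansion} together with $|X_n|\leq n$ to control $G(o,X_n|\varrho)$ exponentially in $n$. The only difference is cosmetic: the paper derives the inequality $p^{(m)}(o,w)\leq G(o,w|\varrho)\varrho^{-m}$ via a Cauchy contour integral, whereas you obtain it directly by observing that a single nonnegative term of a convergent power series is dominated by the full sum.
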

\begin{proof}
Denote by $\mathcal{C}_\varrho$ the circle with radius $\varrho$ in the complex plane centered at
$0$. A straightforward computation shows  for $m\in\mathbb{N}_0$: 
$$
\frac{1}{2\pi i} \oint_{\mathcal{C}_\varrho} z^m \frac{dz}{z} = 
\begin{cases}
1, & \textrm{if } m=0,\\
0, & \textrm{if } m\neq 0.
\end{cases}
$$
Let be $w\in\mathcal{L}$. An application of Fubini's Theorem yields
\begin{eqnarray*}
\frac{1}{2\pi i} \oint_{\mathcal{C}_\varrho} G(o,w|z)\,z^{-m} \frac{dz}{z} & = &
\frac{1}{2\pi i} \oint_{\mathcal{C}_\varrho} \sum_{k\geq 0} p^{(k)}(o,w) z^k\,z^{-m}
\frac{dz}{z}\\
&=&
\frac{1}{2\pi i} \sum_{k\geq 0} p^{(k)}(o,w) \oint_{\mathcal{C}_\varrho} z^{k-m}
\frac{dz}{z} = p^{(m)}(o,w).
\end{eqnarray*}
Since $G(o,w|z)$ is analytic on $\mathcal{C}_\varrho$, we have $|G(o,w|z)|\leq G(o,w|\varrho)$ for all
$|z|=\varrho$. Thus, 
$$
p^{(m)}(o,w) \leq \frac{1}{2\pi}\cdot \varrho^{-m-1}\cdot  G(o,w|\varrho)\cdot  2\pi \varrho = G(o,w|\varrho)\cdot  \varrho^{-m}.
$$
Set $L:=1 \lor \max\bigl\lbrace \bar L\bigl(ab,cde|\varrho\bigr)\mid a,b,c,d,e\in\calA\bigr\rbrace$,
$C_0:=\varrho \cdot G(o,o|\varrho)\cdot \sum_{a\in\calA} L(o,a|\varrho)$ and
$C_1=\max\{\overline{G}(ab,cd|\varrho)\mid ab,cd\in\calA^2\}$. Equation
(\ref{equ:L-expansion}) provides for all $w\in\calL$ with $|w|\geq 2$
$$
G(o,w|\varrho) =G(o,o|\varrho)\cdot L(o,w|\varrho) 
% \leq \sum_{ab\in\calA^2} G(o,w_1ab|\varrho) \cdot
% \mathds{1}_{ab}^T \mathds{L}(w_1,w_2)\cdot \ldots\cdot
% \mathds{L}(w_{t-3},w_{t-2})\mathds{1}
\leq C_0\cdot |\mathcal{A}|^{2(|w|-2)}\cdot L^{|w|-2}\cdot C_1.
$$
Set $C_2:=C_0 \lor \max\{G(o,w|\varrho)|w\in\calL,|w|\leq 2\}$.
Since $|X_n|\leq n$, we obtain the proposed inequality by setting
$D_1:=C_1+ C_2$ and $D_2:=|\mathcal{A}|^2\cdot L$:
$$
p^{(m)}(o,X_n) \leq D_1\cdot  |\mathcal{A}|^{2|X_n|}\cdot L^{|X_n|} \cdot \varrho^{-m}
\leq D_1\cdot  |\mathcal{A}|^{2n}\cdot L^n \cdot \varrho^{-m} = D_1\cdot D_2^n \cdot \varrho^{-m}.
$$
\end{proof}
The following technical lemma will be used in the proof of the next theorem:
\begin{Lemma}\label{cut-lemma}
Let $(A_n)_{n\in\N}$, $(a_n)_{n\in\N}$, $(b_n)_{n\in\N}$ be sequences of
strictly positive numbers with $A_n=a_n+b_n$. Assume that $\lim_{n\to\infty}
-\frac{1}{n}\log A_n=c \in [0,\infty)$ and that $\lim_{n\to\infty} b_n/q^n
= 0$ for all $q\in(0,1)$. Then $\lim_{n\to\infty} -\frac{1}{n}\log a_n=c$. 
\end{Lemma}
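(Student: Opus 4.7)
The statement is essentially saying that a very small additive perturbation $b_n$ cannot affect the exponential decay rate of $A_n$. My plan is to prove the two-sided bound $-\tfrac{1}{n}\log a_n \to c$ by showing separately the $\liminf$ and $\limsup$.

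For the lower bound on $\liminf$, I would simply use that $a_n \leq A_n$ (since $b_n > 0$), which gives $-\tfrac{1}{n}\log a_n \geq -\tfrac{1}{n}\log A_n \to c$, hence $\liminf_{n\to\infty} -\tfrac{1}{n}\log a_n \geq c$. This is immediate and requires no hypothesis on $b_n$.

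The content of the lemma is in the matching upper bound. The key observation is that the assumption $b_n/q^n \to 0$ for every $q\in(0,1)$ means $b_n$ decays super-geometrically, so it must be negligible compared to $A_n$, which decays only at rate $e^{-cn}$. Concretely, I would fix $\varepsilon>0$, and from the hypothesis $-\tfrac{1}{n}\log A_n \to c$ deduce that $A_n \geq e^{-(c+\varepsilon)n}$ for $n$ large enough. Choosing $q := e^{-(c+\varepsilon)} \in (0,1)$ (which is legitimate for any $\varepsilon>0$, including the case $c=0$), the hypothesis gives $b_n / q^n \to 0$, i.e.\ $b_n \cdot e^{(c+\varepsilon)n} \to 0$. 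Combining these two bounds yields $b_n / A_n \to 0$, and therefore
\[
\frac{a_n}{A_n} = 1 - \frac{b_n}{A_n} \xrightarrow{n\to\infty} 1.
\]
Taking logarithms gives $\log a_n - \log A_n \to 0$, and dividing by $n$ shows $-\tfrac{1}{n}\log a_n$ and $-\tfrac{1}{n}\log A_n$ share the same limit $c$.

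I do not expect any real obstacle here; the only small subtlety is ensuring $q = e^{-(c+\varepsilon)}$ lies strictly in $(0,1)$, which requires $c+\varepsilon>0$. This is automatic since $c\geq 0$ and $\varepsilon$ may be chosen strictly positive, so the boundary case $c=0$ poses no additional difficulty. The proof is essentially two lines of estimates once the right $q$ is chosen.
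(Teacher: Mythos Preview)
Your proof is correct. The paper itself does not give a proof of this lemma but simply cites \cite[Lemma 3.5]{gilch:11}, so there is no in-paper argument to compare against; your self-contained argument via $b_n/A_n \to 0$ (hence $a_n/A_n \to 1$) is exactly the natural one. Note incidentally that your second paragraph already establishes the full limit $-\tfrac{1}{n}\log a_n \to c$ outright, so the separate $\liminf$ bound in your first paragraph is redundant---but this is harmless.
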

\begin{proof}
A proof can be found in \cite[Lemma 3.5]{gilch:11}.
\end{proof}

\begin{Lemma}\label{lemma:sum-bounds}
For $n\in\N$, consider the function $f_n:\calL\to\R$ defined by
$$
f_n(w):=\begin{cases}
-\frac{1}{n}\log \sum_{m=0}^{n^2} p^{(m)}(o,w), & \textrm{if }
p^{(n)}(o,w)>0,\\
0, & \textrm{otherwise.}
\end{cases}
$$
Then there are constants $d$ and $D$ such that $d\leq f_n(w)\leq D$ for all
$n\in\N$ and $w\in\calL$.
\end{Lemma}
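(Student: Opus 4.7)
The plan is to prove the two bounds separately. The upper bound comes from a single-path estimate using the minimal positive single-step transition probability, while the lower bound follows from the Green-function bound of Lemma~\ref{lemma:prob-bound} together with the elementary observation that $p^{(n)}(o,w)>0$ forces $|w|\leq n$.

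For the \textbf{upper bound}, let $\varepsilon_1 > 0$ denote the minimal positive single-step transition probability, as introduced in the proof of Proposition~\ref{prop:L-limit}; this exists because the transitions in (\ref{equ:random-walk}) take only finitely many distinct positive values (they depend only on the last two letters of the current word and on the replacement word of length at most three). If $p^{(n)}(o,w) > 0$, pick any realising path $o = u_0, u_1, \ldots, u_n = w$ in $\calL$; then $p^{(n)}(o,w) \geq \prod_{i=1}^n p(u_{i-1},u_i) \geq \varepsilon_1^n$, and hence
$$
\sum_{m=0}^{n^2} p^{(m)}(o,w) \;\geq\; p^{(n)}(o,w) \;\geq\; \varepsilon_1^n,
$$
so $f_n(w) \leq -\log\varepsilon_1 =: D$. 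For $w$ with $p^{(n)}(o,w)=0$ we have $f_n(w)=0 \leq D$ since $\varepsilon_1 \leq 1$ makes $D\geq 0$.

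For the \textbf{lower bound}, inspection of (\ref{equ:random-walk}) shows that $\bigl||X_{m+1}| - |X_m|\bigr| \leq 1$, so $p^{(n)}(o,w)>0$ forces $|w|\leq n$. Fix some $\varrho \in (1, R)$ with $R$ from Lemma~\ref{lemma:R>1}. The proof of Lemma~\ref{lemma:prob-bound} produces constants $D_1, D_2 \geq 1$ (depending on $\varrho$ but not on $w$) such that $p^{(m)}(o,w) \leq D_1 D_2^{|w|}\varrho^{-m}$ for every $m\geq 0$ and every $w \in \calL$; the exponent $n$ appearing in the statement of that lemma was used only via the bound $|X_n|\leq n$, and one may keep $|w|$ instead. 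Using $|w|\leq n$ and summing the geometric series,
$$
\sum_{m=0}^{n^2} p^{(m)}(o,w) \;\leq\; D_1 D_2^n \sum_{m=0}^{\infty}\varrho^{-m} \;=\; \frac{D_1\varrho}{\varrho-1}\cdot D_2^n,
$$
and therefore
$$
f_n(w) \;\geq\; -\log D_2 \;-\; \frac{1}{n}\log\frac{D_1\varrho}{\varrho-1},
$$
which is uniformly bounded below for $n\geq 1$ by a constant $d$; choosing $d\leq 0$ handles the case $p^{(n)}(o,w)=0$ automatically.

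There is essentially no obstacle. The only subtlety worth flagging is that $D_2 = |\calA|^2 L$ may well exceed $1$, so the resulting constant $d$ is typically strongly negative; the statement requires only boundedness, not positivity, and the bound is independent of $w$, which is all that is needed for the application of Lemma~\ref{cut-lemma} in the sequel.
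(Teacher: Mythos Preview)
Your upper bound is exactly the paper's argument. For the lower bound your approach is correct but differs from the paper's. The paper observes directly that
\[
\sum_{m=0}^{n^2} p^{(m)}(o,w)\;\leq\; G(o,w|1)\;=\;F(o,w)\,G(w,w|1)\;\leq\;\frac{1}{1-1/R},
\]
the last step coming from the first-return decomposition $G(w,w|z)=(1-U(w,w|z))^{-1}$ and the convexity estimate $U(w,w|1)\leq 1/R$. This gives a constant bound on the sum, independent of both $n$ and $w$, so $f_n(w)\geq -\tfrac1n\log\tfrac{1}{1-1/R}\geq -\log\tfrac{1}{1-1/R}=:d$. Your route via Lemma~\ref{lemma:prob-bound} is also valid---you correctly note that the proof there really bounds by $D_1 D_2^{|w|}\varrho^{-m}$ and that $|w|\leq n$ lets you replace $|w|$ by $n$---but it yields an $n$-dependent bound $\sum\leq \mathrm{const}\cdot D_2^n$ which only becomes uniform after dividing by $n$. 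The paper's argument is shorter and gives a cleaner constant; yours reuses a lemma already in hand and avoids introducing the first-return function $U$. Either is perfectly adequate for the subsequent application of dominated convergence.
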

\begin{proof}
Let be $w\in\calL$ and $n\in\N$ with $p^{(n)}(o,w)>0$. For $w_1\in\calL$ and
$z>0$, define the first return generating function as
$$
U(w_1,w_1|z) := \sum_{n\geq 1}\Prob\bigl[X_n=w_1,\forall m\in\{1,\dots,n-1\}:
X_m\neq w_1\bigl|X_0=w_1\bigr]\cdot z^n.
$$ 
Recall the number $R>1$ from Lemma \ref{lemma:R>1}. Then
\begin{equation}\label{equ:G-estimate}
G(w,w|1)\leq \frac{1}{1-\frac{1}{R}};
\end{equation}
indeed, since $G(w,w|z)=\bigl(1-U(w,w|z)\bigr)^{-1}$ it must be that $U(w,w|z)<1$
for all $w\in\calL$ and all $z\in[0,R)$; moreover, $U(w,w|0)=0$, $U(w,w|z)$ is continuous, strictly increasing and
strictly convex for $z\in[0,R)$, so we must have $U(w,w|z)\leq 1/R$  for all
$z\in[0,R)$, providing (\ref{equ:G-estimate}).
\par
Define $F(o,w):=\sum_{n\geq 0} f^{(k)}(o,w)$, where $f^{(k)}(o,w)$ is the
probability of starting at $o$ and with the first visit to $w$ at time $k$. By
conditioning on the first visit to $w$ we get $G(o,w|1)=F(o,w)G(w,w|1)$. Therefore,
$$
\sum_{m=0}^{n^2} p^{(m)}(o,w)
\leq G(o,w|1)=
 F(o,w)\cdot G(w,w|1)  \leq 
\frac{1}{1-\frac{1}{R}}, 
$$
that is,
$$
f_n(w) \geq -\frac{1}{n} \log \frac{1}{1-\frac{1}{R}}\geq -\log \frac{1}{1-\frac{1}{R}}=:d.
$$
For the upper bound, observe that $w\in \calL$ with
$p^{(n)}(o,w)>0$ can be reached from $o$ in
$n$ steps with a probability of at least $\varepsilon_0^{n}$,
where 
$$
\varepsilon_0 :=\min\{p(w_1,w_2) \mid w_1,w_2\in\calA^\ast, p(w_1,w_2)>0\} >0
$$  
is independent from
$w$. Thus, the sum $\sum_{m=0}^{n^2} p^{(m)}(o,w)$ has a value
greater or equal to $\varepsilon_0^{n}$. Hence,
$f_n(x)\leq -\log\varepsilon_0=:D$.
\end{proof}
Now we can finally prove:

\begin{proof}[Proof of Theorem \ref{th:existence-entropy}]
%
% unnoetig:
% For $w\in\calA^2$ with $|w|\geq 2$, we write $w^\ast$ for the the word arising
% from $w$, where we drop the last two letters; in the case $|w|<2$ we set
% $w^\ast:=o$. ???
% \par
Recall Equation (\ref{equ:G-L}). We can rewrite $\ell\cdot \lambda^{-1} \cdot H(\Y)$ as
\begin{eqnarray*}
&&\frac{\ell\cdot  H(\Y)}{\lambda} = \int \frac{\ell\cdot  H(\Y)}{\lambda}\,d\Prob 
%=\int \lim_{n\to\infty} -\frac{1}{n}l(X_n)\,d\Prob\\
=
\int \lim_{n\to\infty} -\frac{1}{n} \log
 L\bigl(o,X_n(\omega)\bigr|1\bigr)\,d\Prob(\omega) \\
&=& \int  \lim_{n\to\infty} -\frac{1}{n}
\log 
\frac{G\bigl(o,X_n(\omega)\bigr|1\bigr)}{G(o,o|1)}\,d\Prob(\omega) =
\int \lim_{n\to\infty} -\frac{1}{n} \log  G\bigl(o,X_n(\omega)|1\bigr)\,d\Prob(\omega).
\end{eqnarray*}
Recall that $\pi_n$ denotes the distribution of $X_n$. Since
$$
G(o,X_n|1) = \sum_{m\geq 0} p^{(m)}(o,X_n) \geq p^{(n)}(o,X_n) = \pi_n(X_n),
$$
we have
\begin{equation}\label{equ:liminf-h}
\frac{\ell\cdot  H(\Y)}{\lambda} \leq \int \liminf_{n\to\infty} -\frac{1}{n} \log \pi_n\bigl(X_n(\omega)\bigr)\,d\Prob(\omega).
\end{equation}
The next aim is to prove that $\limsup_{n\to\infty} -\frac{1}{n}\E\bigl[\log
\pi_n(X_n)\bigr] \leq \ell\cdot H(\Y)/\lambda$. 
We now apply Lemma \ref{cut-lemma} by setting 
$$
A_n:=\sum_{m\geq 0} p^{(m)}(o,X_n),\ 
a_n:=\sum_{m=0}^{n^2} p^{(m)}(o,X_n) \textrm{ and } b_n:=\sum_{m\geq n^2+1}
p^{(m)}(o,X_n).
$$
By Lemma \ref{lemma:prob-bound},
$$
b_n\leq \sum_{m\geq n^2+1} D_1 \cdot  D_2^n \cdot  \varrho^{-m}
= D_1\cdot D_2^n   \cdot
\frac{\varrho^{-n^2-1}}{1-\varrho^{-1}}. 
$$
Therefore, $b_n$ decays faster than any geometric sequence. Applying Lemma
\ref{cut-lemma} together with (\ref{equ:G-L}) gives almost surely
$$
\frac{\ell \cdot H(\Y)}{\lambda}=
\lim_{n\to\infty} -\frac{1}{n} \log L(o,X_n) =
\lim_{n\to\infty} -\frac{1}{n} \log G(o,X_n) =
\lim_{n\to\infty}  -\frac{1}{n} \log \sum_{m=0}^{n^2}
p^{(m)}\bigl(o,X_n\bigr).
$$
Due to Lemma \ref{lemma:sum-bounds} we can apply the Dominated Convergence Theorem and get:
\begin{eqnarray*}
&&\frac{\ell \cdot H(\Y)}{\lambda} =\int \frac{\ell \cdot H(\Y)}{\lambda}\,d\Prob 
 =   \int \lim_{n\to\infty} -\frac{1}{n} \log \sum_{m=0}^{n^2}
p^{(m)}(o,X_n)\, d\Prob \\
&=&
\lim_{n\to\infty} \int -\frac{1}{n} \log \sum_{m=0}^{n^2}
p^{(m)}(o,X_n)\, d\Prob
= \lim_{n\to\infty} -\frac{1}{n} \sum_{w\in\calL} p^{(n)}(o,w) \log
 \sum_{m=0}^{n^2}
p^{(m)}(o,w).
\end{eqnarray*}
For $w\in\calL$, define the following distribution $\mu_0$ on $\calL$: 
$$
\mu_0(w):=\frac{1}{n^2+1} \sum_{m=0}^{n^2} p^{(m)}(o,w).
$$
Recall that the non-negativity of the Kullback-Leibler divergence (in this
context also called \textit{Shannon's Inequality}) gives
$$
-\sum_{w\in\calL} p^{(n)}(o,w) \log \mu_0(w) \geq -\sum_{w\in\calL} p^{(n)}(o,w) \log p^{(n)}(o,w).
$$
%We now apply this inequality on $\mu_0$:
Therefore,
\begin{eqnarray*}
\frac{\ell\cdot H(\Y)}{\lambda}& \geq & \limsup_{n\to\infty} -\frac{1}{n} \sum_{w\in\calL}
p^{(n)}(o,w) \log (n^2+1) -\frac{1}{n} \sum_{w\in\calL} p^{(n)}(o,w) \log
p^{(n)}(o,w) \\
&=& \limsup_{n\to\infty} -\frac{1}{n}\int \log \pi_n(X_n)\, d\Prob.
\end{eqnarray*}
Now we can conclude with (\ref{equ:liminf-h}) and Fatou's Lemma:
\begin{eqnarray*}
\frac{\ell\cdot H(\Y)}{\lambda} & \leq & \int \liminf_{n\to\infty} -\frac{1}{n}\log \pi_n(X_n) d\Prob \leq 
\liminf_{n\to\infty} \int -\frac{1}{n}\log \pi_n(X_n) d\Prob \\
& \leq &
\limsup_{n\to\infty} \int -\frac{1}{n}\log \pi_n(X_n) d\Prob \leq \frac{\ell\cdot H(\Y)}{\lambda}.
\end{eqnarray*}
Thus, the asymptotic entropy $h:=\lim_{n\to\infty} -\frac{1}{n} \E\bigl[\log \pi_n(X_n)\bigr]$ exists
and equals $\ell\cdot H(\Y)/\lambda$. 
\end{proof}
Finally, we can prove:
\begin{proof}[Proof of Corollary \ref{cor:entropy-convergence}]
The proofs of the statements in Corollary \ref{cor:entropy-convergence} are completely analogous to the proofs in \cite[Corollary 3.9, Lemma
3.10]{gilch:11}, where \cite[Lemma 3.10]{gilch:11} holds also in the case
$h=0$. 
\end{proof}
\begin{proof}[Proof of Corollary \ref{cor:green-distance}]
Recall the definition of $F(o,w)$ from the proof of Lemma
\ref{lemma:sum-bounds} and the equation $G(o,w|1)=F(o,w)G(w,w|1)$. This yields together with (\ref{equ:G-L}):
$$
\Prob[\exists n\in\N_0: X_n=w] = F(o,w) = \frac{G(o,w|1)}{G(w,w|1)}=\frac{G(o,o|1)}{G(w,w|1)}L(o,w|1).
$$
Since $1\leq G(X_n,X_n|1)\leq 1/(1-\frac{1}{R})$ with $R$ from Lemma
\ref{lemma:R>1}, we obtain the proposed result due to Proposition \ref{prop:L-limit}.
\end{proof}

\section{Calculation of the Entropy}
\label{sec:entropy-calculation}

In this section we collect several results about the asymptotic entropy. We show how the entropy can be calculated numerically or even exactly in some special cases, and we give some inequalities.

\subsection{Numerical Calculation and Inequalities}

In order to compute $h=\ell\cdot  H(\Y)/\lambda$ we have to calculate the
three factors: while there are formulas for $\ell$ (see \cite[Theorem 2.4]{gilch:08}) and
$\lambda$ (given by (\ref{equ:lambda})), it remains to explain how to calculate
$H(\Y)$. For this purpose, define for random variables $A_1,\dots,A_n$ on a
finite state space $\mathcal{W}_A$ the \textit{joint entropy} as
$$
H(A_1,\dots,A_n):=
-\sum_{a_1,\dots,a_n\in\mathcal{W}_A}
\Prob\bigl[A_1=a_1,\dots,A_n=a_n\bigr]\log
\Prob\bigl[A_1=a_1,\dots,A_n=a_n\bigr],
$$
and let the \textit{conditional entropy} $H(A_n|A_1,\dots,A_{n-1})$ be defined as
$$
-\sum_{a_1,\dots,a_n\in\mathcal{W}_A}
\Prob\bigl[A_1=a_1,\dots,A_n=a_n\bigr]\log
\Prob\bigl[A_n=a_n\bigl| A_1=a_1,\dots,A_{n-1}=a_{n-1}\bigr].
$$
Here, we set $0\cdot \log 0:=0$, since $x\log x\to 0$ as $x\to 0+$.
By Cover and Thomas \cite[Theorem 4.2.1]{cover-thomas}, we have $H(\Y)=\lim_{n\to\infty} \frac1n H(\Y^{(\nu)}_1,\dots,\Y^{(\nu)}_n)$.
In general, the computation of $H(\Y)$
is a hard task. But there is a simple way for a numerical calculation of $H(\Y)$, which follows from the inequalities
\begin{equation}\label{equ:H-inequalities}
 H\bigl(\Y^{(\nu)}_n \bigl|
 \bigl((\mathbf{i}_1^{(\nu)},\mathbf{W}_1^{(\nu)}),(\mathbf{i}_2^{(\nu)},\mathbf{W}_2^{(\nu)})\bigr),\Y^{(\nu)}_1,\dots,\Y^{(\nu)}_{n-1}\bigr)
\leq
H(\Y) \leq  H(\Y^{(\nu)}_n\mid \Y^{(\nu)}_1,\dots,\Y^{(\nu)}_{n-1})
\end{equation}
for all $n\in\N$; see \cite[Theorem 4.5.1]{cover-thomas}. In particular, it is
even shown that
$$
H(\Y^{(\nu)}_n\mid \Y^{(\nu)}_1,\dots,\Y^{(\nu)}_{n-1})-H\bigl(\Y^{(\nu)}_n \bigl| \bigl((\mathbf{i}_1^{(\nu)},\mathbf{W}_1^{(\nu)}),(\mathbf{i}_2^{(\nu)},\mathbf{W}_2^{(\nu)})\bigr),\Y^{(\nu)}_1,\dots,\Y^{(\nu)}_{n-1}\bigr) \xrightarrow{n\to\infty} 0.
$$
Hence, one can calculate $H(\Y)$ numerically up to an arbitrarily small
error. Obviously, this numerical approach depends strongly on the ability to
solve the system of equations given by (\ref{h-equations}). 
\par
We now investigate whether the entropy is non-zero or not.
\begin{Cor}\label{cor:h-zero}
If the random walk is expanding, then $h>0$. Otherwise, $h=0$.
\end{Cor}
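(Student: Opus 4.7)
The plan is to combine the formula $h = \ell\cdot H(\Y)/\lambda$ from Theorem \ref{th:existence-entropy} with a case analysis on the geometry of the coverings. Since $(X_n)_{n \in \N_0}$ is transient, $\ell > 0$ by Malyshev \cite{malyshev}, and $\lambda = \sum_{w \in \mathcal{W}_0} \nu_0(w)\,(|w|-2)$ is strictly positive and finite because $\mathcal{W}_0$ is a finite subset of $\calA^\ast_{\geq 3}$. Thus the sign of $h$ is controlled entirely by $H(\Y)$, and it is enough to show $H(\Y)=0$ in the non-expanding case and $H(\Y)>0$ in the expanding case.

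For the non-expanding case I would invoke the construction of Subsection \ref{sub:non-expanding}: every cone is covered by a single subcone. Consequently every element of $\mathcal{W}_\pi$ has the form $(s,t_1)$, and for each type $s \in \mathcal{I}$ there is a unique successor type $t=t(s)$ with $(s,t_1)\in\mathcal{W}_\pi$. In particular $\mathbf{i}_{k+1}$ is a deterministic function of $\mathbf{i}_k$, so both $(\mathbf{i}_k)_{k\geq 1}$ and $(\Y_k)_{k\geq 1}$ are deterministic once $\mathbf{i}_1$ is fixed. Since $\mathcal{I}$ is finite, the number of realizable initial segments $(\Y_1,\dots,\Y_n)$ is bounded independently of $n$, so $\Prob[\Y_1=\underline{y}_1,\dots,\Y_n=\underline{y}_n]$ is bounded below by a positive constant along every realizable trajectory. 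Proposition \ref{prop:Y-equal-Y-hat} then yields $H(\Y)=0$ and hence $h=0$.

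For the expanding case I would exploit the genuine branching in the coverings. By the expanding hypothesis, every cone contains at least two proper disjoint subcones, so for every cone type $i$ the covering has at least two distinct subcones, giving at least two distinct elements $(i,j_n)\in\mathcal{W}_\pi$ reachable from any state with current type $i$. I would select a specific state $\underline{w}_0 \in \mathcal{W}$ admitting two positive-probability transitions $\underline{w}_0 \to \underline{w}_1$ and $\underline{w}_0 \to \underline{w}_2$ whose images under $\pi$ are distinct, and use Proposition \ref{prop:i-W-markov} together with positive recurrence to conclude that, under the stationary version of the chain, $\underline{w}_0$ is visited with a strictly positive limiting frequency. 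At each such visit, the conditional law of $\Y_k$ given the past has entropy uniformly bounded below by a positive constant; combining this with Proposition \ref{prop:Y-equal-Y-hat} then yields $H(\Y)>0$ and hence $h>0$.

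The main obstacle lies in the expanding case: because $(\Y_k)_{k \in \N}$ is only a hidden Markov chain and not a Markov chain, its entropy rate cannot be read off from a transition matrix. A clean workaround is to lower-bound $H(\Y)$ by the conditional entropy $H(\Y_k \mid \mathbf{i}_k^{(\nu)}, \W_k^{(\nu)})$ under the invariant measure $\nu$, which is strictly positive by the branching construction above, and then transfer this positivity to the almost-sure limit defining $H(\Y)$ in Proposition \ref{prop:Y-equal-Y-hat} via the ergodicity of $\bigl((\mathbf{i}_k^{(\nu)},\W_k^{(\nu)})\bigr)_{k\in\N}$.
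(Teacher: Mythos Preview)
Your proposal is correct and follows essentially the same route as the paper. In both arguments one reduces to the sign of $H(\Y)$ via $h=\ell\cdot H(\Y)/\lambda$; in the non-expanding case one observes that a single-subcone covering makes $\Y_k$ a deterministic function of $\Y_1$ (the paper phrases this as $H(\Y)\le H(\Y_n^{(\nu)}\mid \Y_1^{(\nu)})=0$ via the upper bound in \eqref{equ:H-inequalities}), and in the expanding case one finds a hidden state from which two transitions give distinct $\pi$-images and uses a conditional-entropy lower bound. The paper quotes the Cover--Thomas bound $H(\Y)\ge H\bigl(\Y_2^{(\nu)}\mid (\mathbf{i}_1^{(\nu)},\W_1^{(\nu)}),(\mathbf{i}_2^{(\nu)},\W_2^{(\nu)}),\Y_1^{(\nu)}\bigr)$ from \eqref{equ:H-inequalities} directly, which by the Markov property collapses to exactly your bound $H(\Y)\ge H(\Y_k^{(\nu)}\mid \mathbf{i}_k^{(\nu)},\W_k^{(\nu)})$; so your ``workaround'' is in fact the same inequality the paper uses, just stated after the simplification rather than before.
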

\begin{proof}
Take any $(i_{k,l},w_1),(j_{p,q},w_2)\in\mathcal{W}$ with
$$
\Prob[(\mathbf{i}_1^{(\nu)},\W_1^{(\nu)})=(i_{k,l},w_1),(\mathbf{i}_2^{(\nu)},\W_2^{(\nu)})=(j_{p,q},w_2)]>0.
$$
The
values $(i_{k,l},w_1),(j_{p,q},w_2)$ determine the value of $\Y_1^{(\nu)}$ uniquely. In the expanding
case, there are at least two elements $(s_{j,m},w'),
(t_{j,n},w'')\in\mathcal{W}$ such that $w',w''\in C([w_2])$ with
$C(w')\cap C(w'')=\emptyset$ and
$q(w_2,w')>0$ and $q(w_2,w'')>0$, yielding $\pi\bigl((j_{p,q},w_2),(s_{j,m},w') \bigr) \neq \pi\bigl((j_{p,q},w_2),(t_{j,n},w'') \bigr)$. Let $w'$ be in the $m$-th cone
of type $s$ in the covering of $C([w_2])$. Then set
\begin{eqnarray*}
&&P\bigl((i_{k,l},w_1),(j_{p,q},w_2),(s_{j,m},w')\bigr)\\
&:=&\Prob\bigl[\Y_2^{(\nu)}=\bigl(\tau(C(w_2)),s_m\bigr) \,\bigl|\,
(\mathbf{i}_1^{(\nu)},\W_1^{(\nu)})=(i_{k,l},w_1),(\mathbf{i}_2^{(\nu)},\W_2^{(\nu)})=(j_{p,q},w_2)\bigr]\\
&\geq& q(w_2,w')>0.
\end{eqnarray*}
Since $q(w_2,w'')>0$ and $C(w')\cap C(w'')=\emptyset$, we also  have
$P\bigl((i_{k,l},w_1),(j_{p,q},w_2),(t_{j,n},w'')\bigr)>0$ implying $P\bigl((i_{k,l},w_1),(j_{p,q},w_2),(s_{j,m},w')\bigr)<1$. From (\ref{equ:H-inequalities}) follows then
\begin{eqnarray*}
H(\Y) &\geq &   H\bigl(\Y^{(\nu)}_2 \bigl|
\bigl((\mathbf{i}_1^{(\nu)},\mathbf{W}_1^{(\nu)}),(\mathbf{i}_2^{(\nu)},\mathbf{W}_2^{(\nu)})\bigr),\Y^{(\nu)}_1\bigr)\\
&\geq&  P\bigl((i_{k,l},w_1),(j_{p,q},w_2),(s_{j,m},w')\bigr) \log P\bigl((i_{k,l},w_1),(j_{p,q},w_2),(s_{j,m},w')\bigr)>0.
\end{eqnarray*}
Thus, we have shown that $h>0$ if $(X_n)_{n\in\N_0}$ is expanding.
\par
Now consider the case when the random walk on $\calL$ is \textit{not}
expanding. Then each cone has a covering consisting of only one single subcone. This implies that
the value $\tau(C(\W_1^{(\nu)}))=\mathbf{i}_1^{(\nu)}$ determines uniquely the values
$\tau(C(\W_k^{(\nu)}))$ for $k\geq 2$. Moreover, given the value of $\tau(C(\W_1^{(\nu)}))$
the values of
$\Y_k^{(\nu)}$, $k\geq 1$,
are deterministic. That is, $\Y_n^{(\nu)}$ is uniquely determined
by $\Y_1^{(\nu)}$, hence $\Prob[\Y_n^{(\nu)}=\cdot \mid \Y_1^{(\nu)}=(s,t_n)]\in\{0,1\}$.
This implies
%
% That is, there exists only one cone type denoted by $\mathcal{1}$ and the projections
% $Z_k=(\mathcal{1},\mathcal{1}_{\mathcal{1},1})$ are constant. Therefore,
$$
0\leq H(\Y) \leq H(\Y^{(\nu)}_n\mid \Y^{(\nu)}_1,\dots,\Y^{(\nu)}_{n-1})\leq
H(\Y^{(\nu)}_n\mid \Y^{(\nu)}_1)=0,
$$
where the last inequality follows from \cite[Theorem
2.6.5]{cover-thomas}. Thus, $h=0$.
\end{proof}
In order to get a complete picture, we show that the entropy is zero for recurrent random walks:
\begin{Cor}\label{cor:recurrent-h-zero}
If $(X_n)_{n\in\N_0}$ is recurrent then $h=0$.
\end{Cor}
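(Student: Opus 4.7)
The plan is to combine Malyshev's criterion (a recurrent walk has vanishing rate of escape) with a straightforward entropy truncation argument; unlike the transient case, no cone construction or generating-function machinery is needed.

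First, I would invoke Malyshev's theorem recalled in Section \ref{sec:notation}: the rate of escape $\ell$ with respect to the natural word length is strictly positive if and only if $(X_n)_{n\in\N_0}$ is transient. Hence, under the recurrence hypothesis, $|X_n|/n \to 0$ almost surely and, consequently, in probability, so for every $\varepsilon>0$,
\[
p_n := \Prob\bigl[|X_n| \leq \varepsilon n\bigr] \longrightarrow 1 \quad \text{as } n \to \infty.
\]

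Next, noting that $-\E[\log \pi_n(X_n)]$ is exactly the Shannon entropy $H(\pi_n)$ of the distribution of $X_n$, I would bound it by truncating at word length $\varepsilon n$. Set $A_n := \{w \in \calA^\ast : |w| \leq \varepsilon n\}$. The chain rule for Shannon entropy applied to the pair $\bigl(X_n, \mathds{1}_{\{X_n \in A_n\}}\bigr)$, together with the crude estimate $H(Y) \leq \log|\mathrm{supp}(Y)|$ on each branch and the count $|\{w\in\calA^\ast: |w|\leq k\}| \leq (k+1)|\calA|^k$, yields
\[
H(\pi_n) \;\leq\; \log 2 \,+\, p_n \log\bigl((\varepsilon n + 1)|\calA|^{\varepsilon n}\bigr) \,+\, (1-p_n)\log\bigl((n+1)|\calA|^n\bigr).
\]
Dividing by $n$ and letting $n \to \infty$ with $\varepsilon$ fixed (using $p_n \to 1$), I obtain $\limsup_{n\to\infty} H(\pi_n)/n \leq \varepsilon \log|\calA|$. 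Since $\varepsilon > 0$ is arbitrary and $H(\pi_n) \geq 0$, both $\liminf$ and $\limsup$ of $H(\pi_n)/n$ vanish, so the limit exists and equals $0$.

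There is no substantial obstacle here; the only subtle point is that the definition of $h$ demands actual convergence of $-\tfrac{1}{n}\E[\log \pi_n(X_n)]$, not just a $\limsup$ bound. This is handled automatically by nonnegativity of Shannon entropy, which sandwiches the sequence between $0$ and the vanishing $\limsup$.
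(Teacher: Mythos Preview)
Your proof is correct. Both your argument and the paper's rest on the same two ingredients: Malyshev's criterion gives $\ell=0$, hence $|X_n|\le \varepsilon n$ with probability tending to $1$; and the number of words of length at most $\varepsilon n$ is bounded by roughly $|\calA|^{\varepsilon n}$. The difference is in packaging. You work directly with the Shannon entropy $H(\pi_n)=-\E[\log\pi_n(X_n)]$ and bound it via the chain rule applied to the pair $(X_n,\mathds{1}_{\{|X_n|\le\varepsilon n\}})$, which gives the $\limsup$ bound in one clean line. The paper instead argues by contradiction: assuming $\limsup_n -\tfrac1n\E[\log\pi_n(X_n)]=c>0$, it extracts a subsequence, shows that $\Prob[-\log\pi_{n_k}(X_{n_k})\ge n_k/N]$ stays bounded below by some $\delta>0$, and then intersects with the event $\{|X_{n_k}|\le\varepsilon n_k\}$ to get the counting contradiction $\delta-\varepsilon\le e^{-n_k/N}|\calA|^{\varepsilon n_k}$. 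Your route is more direct and avoids the contradiction detour and the auxiliary estimate $-\tfrac1n\log\pi_n(X_n)\le -\log\varepsilon_0$; the paper's route makes the pointwise behaviour of $-\log\pi_n(X_n)$ slightly more visible but is otherwise longer for no gain.
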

\begin{proof}
Clearly, $-\frac1n\mathbb{E}\bigl[\log \pi_n(X_n)\bigr]\geq 0$. Assume now that
$\limsup_{n\to\infty} -\frac1n\mathbb{E}\bigl[\log \pi_n(X_n)\bigr]=c>0$. Then
there is a (deterministic) sequence $(n_k)_{k\in\N}$ such that, for any $\varepsilon_1\in(0,c)$,
\begin{equation}\label{equ:h=0-case}
-\frac{1}{n_k}\mathbb{E}\bigl[\log \pi_{n_k}(X_{n_k})\bigr]\geq c-\varepsilon_1>0
\end{equation}
for all $k\in\mathbb{N}$. Denote by $\varepsilon_0$ the minimal occuring positive
single-step transition probability of $(X_n)_{n\in\N_0}$. Then $-\frac{1}{n_k}\log
\pi_{n_k}(X_{n_k})\leq -\log \varepsilon_0$. Choose $N\in\N$ with
$1/N<c-\varepsilon_1$. Then there is some $\delta >0$ with
$$
\Prob\Bigl[-\frac{1}{n_k}\log
\pi_{n_k}(X_{n_k})\geq \frac{1}{N}\Bigr]\geq \delta \quad \forall k\in\N.
$$
To see this, assume that $\delta=\delta_{k}$ depends on $k$ with
$\liminf_{k\to\infty} \delta_{k}=0$: then we get with (\ref{equ:h=0-case}) 
$$
(-\log \varepsilon_0)\cdot \delta_k +(1-\delta_k)\frac{1}{N}\geq
-\frac{1}{n_k}\mathbb{E}\bigl[\log \pi_{n_k}(X_{n_k})\bigr]\geq c-\varepsilon_1;
$$
If $\delta_k$ tends to zero then we get a contradiction to the choice of $N$.
\par
Choose now $\varepsilon>0$ arbitrarily small with $\varepsilon<\delta$. Since $\ell=0$ in the recurrent case, there is some index $K\in\N$
such that for all $k\geq K$:
$$
\delta-\varepsilon \leq
\Prob\bigl[-\log\pi_{n_k}(X_{n_k})\geq n_k/N,|X_{n_k}|\leq
\varepsilon n_k\bigr]
\leq e^{-n_k/N}\cdot |\calA|^{\varepsilon n_k}
$$
which yields the inequality
$$
\frac{1}{N}+\frac{1}{n_k}\log(\delta-\varepsilon)
\leq \varepsilon \log |\calA|.
$$
But this gives a contradiction if we make $\varepsilon$ sufficiently small
since the right hand side tends to zero, but the left hand side to
$\frac{1}{N}$ as $k\to\infty$. Thus, $\limsup_{n\to\infty} -\frac1n\mathbb{E}\bigl[\log
\pi_n(X_n)\bigr]=0$, yielding $h=0$.
\end{proof}

Finally, we state an inequality which connects entropy, drift and growth. For
this purpose, define $\calA_{\leq n}^\ast=\{w\in\calA^\ast \mid |w|\leq n\}$ for
$n> 0$. The \textit{growth} of $\calA^\ast$ is then given by $g:=\lim_{n\to\infty}
\frac1n \log |\calA_{\leq n}^\ast|$. Since 
$|\calA^n| \leq |\calA_{\leq n}^\ast| \leq n |\calA^n|$, we have $g=\log |\calA|$.
We get the following connection between
entropy, drift and growth:
\begin{Th}\label{prop:h-inequality}
$h\leq \ell\cdot \log|\calA|$.
\end{Th}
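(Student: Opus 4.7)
The plan is to exploit the identification $-\frac{1}{n}\E[\log\pi_n(X_n)]=\frac{1}{n}H(\pi_n)$, where $H(\pi_n)=-\sum_{w\in\calL}p^{(n)}(o,w)\log p^{(n)}(o,w)$ is the Shannon entropy of the distribution of $X_n$, and bound $H(\pi_n)$ using the fact that $X_n$ concentrates on a ball of radius roughly $\ell n$ in $\calA^\ast$. The growth of $\calA^\ast$ being $\log|\calA|$ then forces the result.

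Fix $\varepsilon>0$ and set $A_n:=\{w\in\calL : |w|\leq (\ell+\varepsilon)n\}$. Since $|X_n|/n\to\ell$ almost surely (by the rate-of-escape result of Malyshev recalled in Section~\ref{sec:notation}), we have $\Prob[X_n\in A_n]\to 1$ and $\Prob[X_n\notin A_n]\to 0$ as $n\to\infty$. Split the entropy as
\begin{equation*}
H(\pi_n) = \sum_{w\in A_n}p^{(n)}(o,w)\bigl(-\log p^{(n)}(o,w)\bigr) + \sum_{w\notin A_n}p^{(n)}(o,w)\bigl(-\log p^{(n)}(o,w)\bigr).
\end{equation*}
Normalising $p^{(n)}(o,\cdot)$ on $A_n$ and on $A_n^c$ to obtain probability measures and invoking the maximum-entropy principle on each piece (uniform measure maximises entropy on a finite set), these two contributions are respectively bounded by $\Prob[X_n\in A_n]\log|A_n|-\Prob[X_n\in A_n]\log\Prob[X_n\in A_n]$ and $\Prob[X_n\notin A_n]\log|\mathrm{supp}(\pi_n)\setminus A_n|-\Prob[X_n\notin A_n]\log\Prob[X_n\notin A_n]$. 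The last two logarithmic terms together are at most $\log 2$.

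Using $|A_n|\leq|\calA^\ast_{\leq(\ell+\varepsilon)n}|\leq((\ell+\varepsilon)n+1)|\calA|^{(\ell+\varepsilon)n}$ and $|\mathrm{supp}(\pi_n)\setminus A_n|\leq|\calA^\ast_{\leq n}|\leq(n+1)|\calA|^n$, division by $n$ and passage to $n\to\infty$ gives
\begin{equation*}
h=\lim_{n\to\infty}\frac{H(\pi_n)}{n}\leq (\ell+\varepsilon)\log|\calA|\cdot 1 + \log|\calA|\cdot 0 + 0 = (\ell+\varepsilon)\log|\calA|,
\end{equation*}
where the second term vanishes because $\Prob[X_n\notin A_n]\to 0$ while $\log|\calA^\ast_{\leq n}|/n\to\log|\calA|$ is bounded, and the $\log 2/n$ contribution dies. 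Since $\varepsilon>0$ is arbitrary, $h\leq\ell\log|\calA|$.

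There is no real obstacle here; the argument is the classical drift-growth-entropy inequality adapted to the regular-language setting. The only tiny care is to ensure that the contribution of the tail event $\{X_n\notin A_n\}$, where individual probabilities $p^{(n)}(o,w)$ can a priori be exponentially small in $n$, is controlled; this is handled because the support size is only of order $|\calA|^n$ and the probability of the tail tends to zero, so the product divided by $n$ vanishes.
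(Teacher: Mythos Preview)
Your argument is correct. It is, however, a genuinely different route from the paper's. The paper invokes Corollary~\ref{cor:entropy-convergence}(1), namely the almost-sure identification $h=\liminf_{n\to\infty}-\tfrac{1}{n}\log\pi_n(X_n)$, to get that the event $\{-\log\pi_n(X_n)\geq (h-\varepsilon)n,\ |X_n|\leq(\ell+\varepsilon)n\}$ has probability at least $1-\varepsilon$ for large $n$; since on this event each word carries mass at most $e^{-(h-\varepsilon)n}$ and there are at most $|\calA^\ast_{\leq(\ell+\varepsilon)n}|$ candidates, one obtains $1-\varepsilon\leq e^{-(h-\varepsilon)n}|\calA^\ast_{\leq(\ell+\varepsilon)n}|$ and concludes by taking logarithms. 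Your approach instead works directly with the Shannon entropy $H(\pi_n)$, applies the grouping identity $H(\pi_n)=H(P,1-P)+P\,H(\pi_n|_{A_n})+(1-P)\,H(\pi_n|_{A_n^c})$ with $P=\Prob[X_n\in A_n]$, and bounds each conditional entropy by the logarithm of the support size. The advantage of your version is that it uses only the \emph{existence} of $h$ (Theorem~\ref{th:existence-entropy}) and the drift result $|X_n|/n\to\ell$, bypassing the pathwise liminf statement entirely; the paper's version is slightly quicker to write but relies on more of the preceding machinery.
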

\begin{proof}
Let be $\varepsilon>0$. By Corollary \ref{cor:entropy-convergence} (1), there is some $N_\varepsilon\in\N$ such that for all
$n\geq N_\varepsilon$:
$$
1-\varepsilon \leq
\Prob\bigl[-\log\pi_n(X_n)\geq (h-\varepsilon)n,|X_n|\leq
(\ell+\varepsilon)n\bigr]
\leq e^{-(h-\varepsilon)n}\cdot |\calA_{\leq (\ell+\varepsilon)n}^\ast|.
$$
Taking logarithms and dividing by $n$ gives
$$
(h-\varepsilon) +\frac1n\log(1-\varepsilon)\leq (\ell+\varepsilon)\cdot \frac{1}{(\ell+\varepsilon)n}\log |\calA_{\leq (\ell+\varepsilon)n}^\ast|.
$$
Making $\varepsilon$ arbirtraily small and sending $n\to\infty$ yields the proposed claim.
\end{proof}
Let us remark that similar inequalities have been proved by Kaimanovich and Woess \cite{kaimanovich-woess} for time and space homogeneous random walks and in \cite{gilch:11} for random walks on free products.

\subsection{Exact Formula for Unambiguous Cone Boundaries}

In this subsection we give an exact formula for the asymptotic entropy in some special case.
We call $ab\in\calA^2$ \textit{unambiguous} if  $\partial
C(ab)=\{ab\}$. In other words, whenever the random walk enters a subcone of
type $C(wab)$, $w\in\calA^\ast$, it must enter it through its single boundary
point $wab$. We call the cone type $\tau(C(ab))$ also unambiguous. Existence of an unambiguous cone allows
us to ``cut'' the random walk into i.i.d. pieces and to obtain a formula for the
entropy $H(\Y)$. For $n\in\N$, $x_2,\dots,x_n\in\mathcal{W}_0$ and unambiguous $ab\in\calA^2$ define
\begin{eqnarray*}
w(ab,x_2,\dots,x_n) & := &\Prob\bigl[\W_2=x_2,\dots,\W_n=x_n,[\W_{n}]=ab \bigl|
[\W_{1}]=ab\bigr],\\
\tilde w(ab,x_2,\dots,x_n) & := &
\sum_{\substack{y_2,\dots,y_n\in\mathcal{W}_0:\\ y_i\in \partial C(x_i)\\
    \textrm{for } 2\leq i \leq n}}\Prob\bigl[\W_2=y_2,\dots,\W_{n}=y_n,[\W_{n}]=ab \bigl|
[\W_{1}]=ab\bigr],
\end{eqnarray*}
%where $\sim$ is the relation introduced in the proof of Proposition
%\ref{prop:hat-l-convergence}. 
In particular, $\tilde w(ab,x_2)=\Prob\bigl[\W_2=x_2,[\W_{2}]=ab\bigl|
[\W_{1}]=ab\bigr]$.
Recall that $\nu$ denotes the invariant probability measure of the process
$(\mathbf{i}_k,\W_k)_{k\in\N}$. For unambiguous $ab\in\calA^2$, set
$$
\nu_{ab}:=\sum_{(i_{m,n},x)\in\mathcal{W}: [x]=ab} \nu(i_{m,n},x).
$$
Then:
\begin{Th}\label{thm:unambiguous}
If $ab\in\calA^2$ is unambiguous, then
$$
H(\Y)=-\nu_{ab}
\sum_{n\geq 1} \sum_{\substack{x_2,\dots x_{n-1}\in\mathcal{W}_0:\\
    [x_i]\neq ab \,\mathrm{ for }\, 2\leq i \leq n-1}}
\sum_{\substack{x_n\in\mathcal{W}_0:\\ [x_n]=ab}} w(ab,x_2,\dots,x_n) \log \tilde w(ab,x_2,\dots,x_n). 
$$
\end{Th}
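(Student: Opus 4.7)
My plan is to exploit the renewal structure at the indices $k$ with $[\W_k]=ab$ and thereby evaluate the Shannon--McMillan--Breiman limit
$$H(\Y)=\lim_{N\to\infty}-\tfrac{1}{N}\log\Prob[\Y_1=\underline y_1,\dots,\Y_N=\underline y_N]$$
from Proposition \ref{prop:Y-equal-Y-hat} along a typical trajectory. By Proposition \ref{prop:W-process}, the kernel $q(x,y)$ of $(\W_k)_{k\in\N}$ depends on $x$ only through $[x]$, so every index $k$ with $[\W_k]=ab$ is a proper regeneration point of the chain. Unambiguity of $ab$, combined with Lemma \ref{lemma:cone-isomorphism}, forces every cone of type $\tau(C(ab))$ to have a single-element boundary whose last two letters are $ab$; hence $[\W_k]=ab$ is equivalent to $\tau(C(\W_k))=\tau(C(ab))$, a condition readable directly from $(\Y_k)_{k\in\N}$.

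Let $T_1<T_2<\cdots$ enumerate the indices $k$ with $[\W_k]=ab$, and set $K_N:=\#\{j:T_j\le N\}$. Positive recurrence of $(\mathbf{i}_k,\W_k)_{k\in\N}$ (Proposition \ref{prop:i-W-markov}) together with Birkhoff's ergodic theorem yields $K_N/N\to\nu_{ab}$ almost surely. The next step is to decompose $\Prob[\Y_1=\underline y_1,\dots,\Y_N=\underline y_N]$ as a product over excursions $[T_j,T_{j+1})$, with negligible boundary contributions coming from the partial blocks before $T_1$ and after $T_{K_N}$. By the Markov property at each renewal, summing the underlying $\W$-probabilities over all entry-point choices inside a prescribed sequence of cones $C(x_2),\dots,C(x_n)$ (with $[x_i]\ne ab$ for $2\le i\le n-1$ and $[x_n]=ab$) turns the probability of observing the corresponding $\Y$-excursion into exactly $\tilde w(ab,x_2,\dots,x_n)$.

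Because the excursions $\vec X^{(j)}=(\W_{T_j+1},\dots,\W_{T_{j+1}})$ are i.i.d.\ with law $w(ab,\cdot)$, the strong law of large numbers applied to the random variables $-\log\tilde w(ab,\vec X^{(j)})$ gives
$$
\frac{1}{K_N}\sum_{j=1}^{K_N}\bigl[-\log\tilde w(ab,\vec X^{(j)})\bigr]\;\xrightarrow{\mathrm{a.s.}}\;-\sum_{n\ge 1}\sum_{\substack{x_2,\dots,x_{n-1}:\\ [x_i]\ne ab}}\sum_{\substack{x_n:\\ [x_n]=ab}} w(ab,\vec x)\log\tilde w(ab,\vec x).
$$
Multiplying by $K_N/N\to\nu_{ab}$ and invoking the block factorization of $\Prob[\Y_1,\dots,\Y_N]$, together with the fact that the initial and terminal contributions are $o(N)$ in logarithm (by finite mean excursion length and uniform positivity of the nonzero one-step $q$-probabilities), recovers the formula claimed in Theorem \ref{thm:unambiguous}.

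The main obstacle is establishing the block factorization itself: one must justify that the sum over $\W$-trajectories consistent with a fixed $\Y$-trajectory commutes cleanly with the partition into excursions. Unambiguity is essential here, for it is precisely the condition under which at each renewal $T_j$ the value $[\W_{T_j}]=ab$ uniquely determines the relevant ``shape'' of $\W_{T_j}$; since $q(\W_{T_j},\cdot)$ depends only on this shape, the summations over entry points inside different excursions genuinely decouple. Without unambiguity, several entry points into a cone of type $\tau(C(ab))$ would be possible, the renewal states would retain residual prefix information, and the clean product structure underlying the SMB-plus-LLN argument above would fail in general.
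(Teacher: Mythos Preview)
Your proposal is correct and follows essentially the same route as the paper's proof: both identify the renewal times where the cone type equals $\tau(C(ab))$, use the strong Markov property together with unambiguity to factorize $\Prob[\Y_1,\dots,\Y_N]$ into i.i.d.\ excursion terms $\tilde w(ab,\cdot)$, apply the law of large numbers to the logarithms, and combine with the ergodic-theorem limit $K_N/N\to\nu_{ab}$. The only cosmetic difference is that the paper restricts to the subsequence of renewal times (so no terminal block appears), whereas you bound the initial and terminal blocks by $o(N)$; both are equally valid.
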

\begin{proof}
Write $\alpha:=\tau(C(ab))$. By Proposition \ref{prop:Y-equal-Y-hat}, we have 
$$
-\frac1n \log \Prob[\Y_1=\underline{y}_1,\dots, \Y_n=\underline{y}_n] \xrightarrow{n\to\infty} H(\Y)
$$
for almost every trajectory
$(\underline{y}_1,\underline{y}_2,\dots)\in\mathcal{W}_{\pi}^{\mathbb{N}}$. For any such
trajectory, we define 
$$
N_0:=\min\bigl\lbrace m\in \N \bigl| \tau(\W_{m+1})=\alpha\bigr\rbrace \textrm{
  and } N_k:=\min\bigl\lbrace m\in \N \bigl|
m>N_{k-1}, \tau(\W_{m+1})=\alpha\bigr\rbrace.
$$
Define $d(n):=\max\{ k\in\N_0 \mid N_k\leq n\}$. Since $\Y_{N_j}$ has the form $(t,\alpha_{t^{(n)},m})$ for some cone type $t\in\mathcal{I}$, $1\leq m \leq n(t,\alpha)$, and $[\W_{N_k+1}]=ab$ for all $k\in\N$ we can use the strong Markov
property as follows for all $ n\geq 1$ and almost every trajectory
$(\underline{y}_1,\underline{y}_2,\dots)\in\mathcal{W}_{\pi}^{\mathbb{N}}$ :
\begin{eqnarray*}
&&\Prob\bigl[\Y_{N_j+1}=\underline{y}_{N_j+1},\dots, \Y_{N_j+n}=\underline{y}_{n}\mid
\Y_1=\underline{y}_1,\dots, \Y_{N_j}=\underline{y}_{N_j}
\bigr] \\
&=& \Prob\bigl[\Y_{N_j+1}=\underline{y}_{N_j+1},\dots, \Y_{N_j+n}=\underline{y}_{n}\mid
[\W_{N_j+1}]=ab\bigr].
%&=& \Prob\bigl[\Y_{N_j+1}=\underline{y}_{N_j+1},\dots, \Y_{N_j+n}=\underline{y}_{n}\mid
%\Y_{N_j}=\underline{y}_{N_j}\bigr].
\end{eqnarray*}
In other words, the $\Y_k$'s collect only the information which cones are
entered successively, but we know that the $(N_j+1)$-th cone is entered through
a boundary point with last two letters $ab$; hence, one can restart the process
at some word ending with $ab$ in the above equation without changing probabilities.
Therefore, we can rewrite the following probability
$\Prob\bigl[\Y_1=\underline{y}_1,\dots, \Y_{d(n)}=\underline{y}_{d(n)}\bigr]$ as
$$
\Prob\bigl[\Y_1=\underline{y}_1,\dots, \Y_{N_0}=\underline{y}_{N_0}\bigr]
\prod_{i=0}^{d(n)-1} \Prob\bigl[\Y_{N_i+1}=\underline{y}_{N_i+1},\dots, \Y_{N_{i+1}}=\underline{y}_{N_{i+1}}\bigl| [\W_{N_{i}+1}]=ab\bigr].
$$
%
%
%\Prob\bigl[\Y_{N_0+1}=\underline{y}_{N_0+1},\dots, \Y_{N_1}=\underline{y}_{N_1}\mid \Y_{N_0}=\underline{y}_{N_0}\bigr]\cdot \ldots \cdot\\
%&&\quad \cdot 
%\Prob\bigl[\Y_{N_{d(n)-1}+1}=\underline{y}_{N_{d(n)-1}+1},\dots, \Y_{N_{d(n)}}=\underline{y}_{N_{d(n)}}\mid \Y_{N_{d(n)-1}}=\underline{y}_{N_{d(n)}-1}\bigr].
%\end{eqnarray*}
Observe that the terms $\log \Prob\bigl[\Y_{N_i+1}=\cdot,\dots,
\Y_{N_{i+1}}=\cdot\bigl| [\W_{N_{i}+1}]=ab\bigr]$, $i\in\N$, are i.i.d., since
one can think of starting at some $\W_k$ with $[\W_k]=ab$ and stopping at the first
time $l>k$ with $[\W_l]=ab$.
By the ergodic theorem for positive recurrent Markov chains, $d(n)/n$ tends almost surely to $\nu_{ab}$. Hence, if we
consider only the subsequence where $n$ equals one of the $N_k$'s we obtain the
following convergence for almost every trajectory
$(\underline{y}_1,\underline{y}_2,\dots)\in\mathcal{W}_\pi^{\N}$ by classical
ergodic theory:
\begin{eqnarray*}
&&-\frac1n \log \Prob\bigr[\Y_1=\underline{y}_1,\dots, \Y_{d(n)}=\underline{y}_{d(n)}\bigr]\\
&=& -\frac{d(n)}{n}\frac{1}{d(n)} \biggl[ \log
\Prob\bigl[\Y_1=\underline{y}_1,\dots, \Y_{N_0}=\underline{y}_{N_0}\bigr]\\
&&\quad +\sum_{i=0}^{d(n)-1}\log
\Prob\bigl[\Y_{N_i+1}=\underline{y}_{N_i+1},\dots, \Y_{N_{i+1}}=\underline{y}_{N_{i+1}}\bigl| [\W_{N_{i}+1}]=ab\bigr]\biggr]\\
%
%%
%&&\quad +\log
%\Prob\bigl[\Y_{N_0+1}=\underline{y}_{N_0+1},\dots, \Y_{N_1}=\underline{y}_{N_1}\bigl| \Y_{N_0}=\underline{y}_{N_0}\bigr]+ \ldots +\\
%&&\quad +
%\log \Prob\bigl[\Y_{N_{d(n)-1}+1}=\underline{y}_{N_{d(n)-1}+1},\dots, \Y_{N_{d(n)}}=\underline{y}_{N_{d(n)}}\bigl|
%\Y_{N_{d(n)-1}}=\underline{y}_{N_{d(n)}-1}\bigr]\Bigr]\\
&\xrightarrow{n\to\infty}&-\nu_{ab} \sum_{k\geq 1} \sum_{\substack{x_2,\dots,
    x_{k-1}\in\mathcal{W}_0:\\ [x_i]\neq ab\\
\textrm{for } 2\leq i\leq k-1}}
\sum_{\substack{x\in\mathcal{W}_0:\\
    [x]=ab }}w(ab,x_2,\dots,x_{k-1},x)\log \tilde w(ab,x_2,\dots,x_{k-1},x).
\end{eqnarray*}
This proves the claim.
\end{proof}

\section{Analyticity of Entropy}
\label{sec:entropy-analyticity}

The random walk on $\calA^\ast$ depends on \textit{finitely} many parameters which are
described by the transition probabilities $p(w_1,w_2)$,
$w_1,w_2\in\calA^\ast$ with $|w_1|\leq 2$ and $|w_2|\leq 3$; see (\ref{equ:random-walk}). That is, each
random walk on $\calA^\ast$ can be defined via a vector
$\underline{p}\in\mathbb{R}_+^{|\calB|}$, where
$$
\calB:=\Bigl\lbrace (w_1,w_2)\,\Bigl|\, w_1\in \calA \cup\calA^2\cup\{o\},w_2\in
\bigcup_{n=1}^3\calA^n\cup\{o\},\bigl||w_1|-|w_2|\bigr|\leq 1\Bigr\rbrace.
$$ 
In other words, the entry of $\underline{p}$ associated with the index
$(w_1,w_2)\in\calB$ describes the value of $p(w_1,w_2)$.
The support $\mathrm{supp}(\underline{p})$ of $\underline{p}$ is the
set of indices in $\calB$ corresponding to non-zero entries of
$\underline{p}$. Fix now any $\underline{p}_0\in\mathbb{R}_+^{|\calB|}$
such that $\underline{p}_0$ describes a well-defined, transient random walk on
$\calA^\ast$, and let $\mathcal{P}(\underline{p}_0)$ be the set of vectors
$\underline{p}\in\mathbb{R}^{|\calB|}$ with support $\mathrm{supp}(\underline{p}_0)$ which
allow well-defined, transient random walks on $\calA^\ast$. The set
$\mathcal{P}(\underline{p}_0)$ can be described by an open polygonal bounded
convex set in $\mathbb{R}^d$ with some suitable $d\leq |\calB |-1$ which depends on $\mathrm{supp}(\underline{p}_0)$; recall that
$\ell>0$ if and only if $(X_n)_{n\in\N_0}$ is transient, and from the formula
of $\ell$ in \cite[Theorem 2.4]{gilch:08} follows that $\ell$ varies
continuously in $\underline{p}$, yielding that there is some open neighbourhood
of $\underline{p}_0$ in $\mathbb{R}^d$ where $(X_n)_{n\in\N_0}$ remains still transient.
We now ask whether
the entropy mapping $\underline{p}\mapsto h=h_{\underline{p}}$ varies
real-analytically on $\mathcal{P}(\underline{p}_0)$. 
\par
In the next subsection we will introduce a new Markov chain which is related to
the last entry time process and leads under the projection $\pi(\cdot,\cdot)$ to a hidden
Markov chain with same distribution as $(\Y_k)_{k\in\N}$. Afterwards we will be
able to prove Theorem \ref{thm:entropy-continuity} in Subsection \ref{subsec:proof-analytic}.

\subsection{Modified Last Entry Time Process}

The aim of this subsection is the construction of a Markov chain related to the
last entry time process $(\mathbf{i}_k,\W_k)_{k\in\mathbb{N}}$ such that the transition matrix
has strictly positive entries and the modified process leads under
$\pi(\cdot,\cdot)$ (see (\ref{equ:pi-def})) to a hidden
Markov chain with same asymptotic entropy.
\par 
Let be $ab,a_1b_1,a_2b_2\in\calA^2$, and let $C_{j_{i,1}}$ be the first cone of
type $j$ in the covering of $C(a_1b_1)$ with $\tau(C(a_1b_1))=i$ and let $C_{j_{k,l}}$ be the $l$-th subcone of type
$j$ in the covering of $C(a_2b_2)$ with $\tau(C(a_2b_2))=k$. Assume that
$y_0\in\partial C_{j_{k,l}}$ with $[y_0]=ab$. Since $C_{j_{i,1}}$ and
$C_{j_{k,l}}$ are isomorphic, there is
some unique $\bar y_0^{[i,j,ab]}\in\calA^\ast$ such that $\bar y_0^{[i,j,ab]}
ab\in\partial C_{j_{i,1}}$; see \mbox{Section \ref{subsec:cone-def}.}
%; see Figure
%  \ref{fig:prefix-replacement}.
% \begin{figure}[htp]
% \begin{center}
% \includegraphics[width=4.8cm]{}
% \end{center}
% \caption{Prefix replacement}
% \label{fig:prefix-replacement}
% \end{figure}
In the following we will sometimes omit the superindex $[i,j,ab]$ and use the notation $\bar y_0=\bar y_0^{[i,j,ab]}$ for describing this replacement.
\par
For $i,j\in \calI$ and $ab\in\calA^2$ with $\tau(C(ab))=j$, we write
$$
\#\{j_{s,t}\mid s\neq i,ab\}:=
\bigl|\bigl\lbrace
(j_{s,t},w)\in\mathcal{W} \bigl| [w]=ab,s\in\calI\setminus\{i\},1\leq t\leq n(s,j)
\bigr\rbrace\bigr|.
$$
It is not hard to see that $\#\{j_{s,t}\mid s\neq i,a_1b_1\}=\#\{j_{s,t}\mid s\neq i,a_2b_2\}$
if $\tau(C(a_1b_1))=\tau(C(a_2b_2))$ but this will not be relevant for our
proofs, so we omit further explanations.
%which is independent from the specific choice of $ab$.
Let be $(i_{k,l},x),(j_{m,n},y)\in\mathcal{W}$ with $[y]=ab\in\calA^2$. This
implies $\tau(C(x))=i$ and $y^{[i,j,ab]}\in\partial C_{j_{i,1}}$, where
$C_{j_{i,1}}$ is the first cone of type $j$ in the covering of $C([x])$. 
Define the following
transition probabilities on $\mathcal{W}$:
\begin{eqnarray*}
\hat q\bigl((i_{k,l},x),(j_{m,n},y)\bigr) :=
\begin{cases}
\frac{1}{\#\{j_{s,t}\mid s\neq i, ab\}+1} \frac{\xi([y])}{\xi([x])}\mathds{L}(x,y), & \textrm{if } m=i \land n=1,\\
\frac{\xi([y])}{\xi([x])}\mathds{L}(x,y),  & \textrm{if } m=i \land n\geq 2,\\
\frac{1}{\#\{j_{s,t}\mid s\neq i, ab\}+1} \frac{\xi([y])}{\xi([x])}\mathds{L}(x,\bar
y^{[i,j,ab]}ab),  &\textrm{if } m\neq i.
\end{cases}
\end{eqnarray*}
It is easy to see that these transition probabilities define a Markov
chain (inherited from the Markov chain $(\mathbf{i}_k,\W_k)_{k\in\N}$): in the
case $m=i \land n\geq 2$ we just have 
$$
\hat q\bigl((i_{k,l},x),(j_{m,n},y)\bigr)= \Prob\bigl[(\mathbf{i}_2,\W_2)=(j_{m,n},y) \mid (\mathbf{i}_1,\W_1)=(i_{k,l},x)\bigr];
$$
otherwise we have, for $(j_{i,1},y)\in \mathcal{W}$, 
\begin{eqnarray*}
&&\hat q\bigl((i_{k,l},x),(j_{i,1},y)\bigr)+
\sum_{\substack{(j_{s,t},w)\in\mathcal{W}: \\ s\neq i,[w]=ab,\\ 1\leq t\leq
    n(s,j)}} \hat q\bigl((i_{k,l},x),(j_{s,t},w)\bigr) \\
&=& 
\Prob\bigl[(\mathbf{i}_2,\W_2)=(j_{i,1},y) \mid (\mathbf{i}_2,\W_2)=(i_{k,l},x)\bigr]
\end{eqnarray*}
since $y=\bar y^{[i,j,ab]}ab$ by definition. In other words,
each step from $(i_{k,l},x)$ to $(j_{m,n},y)$ either behaves according to
(\ref{equ:Q-def}) (case $m=i$ and $n\geq 2$) or the step  from $(i_{k,l},x)$ to
$(j_{i,1},y)$ (when seen as a step of the process
$(\mathbf{i}_k,\W_k)_{k\in\N}$)) is split up into different equally likely
steps $(i_{k,l},x)$ to $(j_{m,n},\bar yab)$ with $m\neq i$ or $m=i\land n=1$.
Observe that the transitions depend only on $[x]$ in the first argument of
$\hat q(\cdot,\cdot)$.
% ; since $\hat q(\cdot,\cdot)$ depends on its
% first argument only by $i$ (and not on $k$ and $l$), it follows from
% $q(\cdot,\cdot)$ that $\hat q(\cdot,\cdot)$ describes also a random walk.
By Proposition \ref{prop:i-W-markov},
% suffix-irreducibility, Proposition \ref{prop:W-process} and Lemma \ref{lemma:aperiodic},
the transition matrix $\widehat Q=\bigl(\hat q((i_{k,l},x),(j_{m,n},y))\bigr)$ is stochastic and governs a
positive recurrent, aperiodic Markov chain
$(\mathbf{t}_k,\mathbf{x}_k)_{k\in\N}$. In particular, $\widehat Q$ has strictly positive entries.
% with invariant probability measure $\hat\nu$. 
The initial distribution $\hat\mu_1$ of $(\mathbf{t}_1,\mathbf{x}_1)$ is  defined as 
$$
\hat\mu_1(i_{m,n},x):=\Prob[(\mathbf{i}_1,\mathbf{W}_1)=(i_{m,n},x)]>0
$$
for $(i_{m,n},x)\in\mathcal{W}$.
%; here, recall that $x$ determines also the values of $i$ and $n$.
%
%where $x\in\partial C_{i,n}$ with $C_{i,n}$ being the $n$-th cone of the
%covering of a cone $C(ab)$ of type $i$ in a cone of type $m$.
%
%If we equip the
%process with the invariant probability measure $\hat\nu$ as initial distribution we
%write $\bigl(\mathbf{t}_k^{(\nu)},\mathbf{x}_k^{(\nu)}\bigr)_{k\in\N}$.
\par
The process $\bigl((\mathbf{t}_{k},\mathbf{x}_{k}),(\mathbf{t}_{k+1},\mathbf{x}_{k+1})
\bigr)_{k\in\mathbb{N}}$ is again a positive recurrent, aperiodic Markov chain whose transition
matrix is denoted by $\widehat{Q}_2$ (arising from $\widehat Q$).
% and whose invariant probability measure is denoted by $\hat\nu_2$. Once again, if we equip
%this process with the initial distribution $\hat\nu_2$, which arises in a natural
%way from $\hat \nu$, then we write $\bigl((\mathbf{t}_{k}^{(\nu)},\mathbf{x}_{k}^{(\nu)}),(\mathbf{t}_{k+1}^{(\nu)},\mathbf{x}_{k+1}^{(\nu)})\bigr)_{k\geq 1}$.
%\par
We now define  a new hidden
Markov chain
% $(\ZZ_k^{(\nu)})_{k\in\N}$ and 
$(\ZZ_k)_{k\in\N}$ by
$$
%\ZZ_k^{(\nu)}:=\pi\bigl((\mathbf{t}_{k}^{(\nu)},\mathbf{x}_{k}^{(\nu)}),(\mathbf{t}_{k+1}^{(\nu)},\mathbf{x}_{k+1}^{(\nu)})\bigr),\
\ZZ_k:=\pi\bigl((\mathbf{t}_{k},\mathbf{x}_{k}),(\mathbf{t}_{k+1},\mathbf{x}_{k+1})\bigr).
$$
%That is, $(\ZZ_k^{(\nu)})_{k\in\N}$ and $(\ZZ_k)_{k\in\N}$ differ in
%their evolution only in their inital distributions.
Observe that at this point the second branch in the definition of $\pi$ in (\ref{equ:pi-def}) comes into
play for the definition of $\ZZ_k$.
The crucial point is the following proposition:
\begin{Prop}\label{prop:Y-hatY-equation}
For all $(s^{(1)},t^{(1)}),\dots,(s^{(n)},t^{(n)})\in \mathcal{W}_\pi$,
$$
\Prob\bigl[\Y_1=(s^{(1)},t^{(1)}),\dots,\Y_n=(s^{(n)},t^{(n)})\bigr]=
\Prob\bigl[\ZZ_1=(s^{(1)},t^{(1)}),\dots,\ZZ_n=(s^{(n)},t^{(n)})\bigr].
$$
\end{Prop}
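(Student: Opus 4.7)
The proof is by induction on $n$, showing that for every initial state $\underline{w}\in\mathcal{W}$,
\begin{equation*}
\Prob_{\underline{w}}\bigl[\Y_1=y^{(1)},\dots,\Y_n=y^{(n)}\bigr]=\Prob_{\underline{w}}\bigl[\ZZ_1=y^{(1)},\dots,\ZZ_n=y^{(n)}\bigr].
\end{equation*}
Since $\hat\mu_1=\mu_1$ by definition, this implies the proposition. The core combinatorial input, used both as the base case $n=1$ and inside the inductive step, is the \emph{projected marginal identity}: for every $\underline{w}_1=(i_{k,l},x)\in\mathcal{W}$, every $y^{(1)}\in\mathcal{W}_\pi$, and every $ab\in\calA^2$,
\begin{equation*}
\sum_{\substack{\underline{w}_2=(j_{m,n},x_2):\\\pi(\underline{w}_1,\underline{w}_2)=y^{(1)},\,[x_2]=ab}}q(\underline{w}_1,\underline{w}_2)=\sum_{\substack{\underline{w}_2=(j_{m,n},x_2):\\\pi(\underline{w}_1,\underline{w}_2)=y^{(1)},\,[x_2]=ab}}\hat q(\underline{w}_1,\underline{w}_2).
\end{equation*}

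For the inductive step I condition on the first transition to write
\begin{equation*}
\Prob_{\underline{w}_1}\bigl[\Y_1=y^{(1)},\dots,\Y_n=y^{(n)}\bigr]=\sum_{\underline{w}_2:\,\pi(\underline{w}_1,\underline{w}_2)=y^{(1)}}q(\underline{w}_1,\underline{w}_2)\,G(\underline{w}_2),
\end{equation*}
with $G(\underline{w}_2):=\Prob_{\underline{w}_2}[\Y_1=y^{(2)},\dots,\Y_{n-1}=y^{(n)}]$. Since $q(\underline{w},\cdot)$, $\hat q(\underline{w},\cdot)$ and $\pi(\underline{w},\cdot)$ all depend on $\underline{w}=(j_{m,n},y)$ only through $[y]$, a short sub-induction on the time-length shows that both $G$ and its $\ZZ$-analogue depend on $\underline{w}_2=(j_{m,n},x_2)$ only through $[x_2]$; the inductive hypothesis identifies these two functions. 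Grouping the outer sum by the value $[x_2]=ab$ and applying the projected marginal identity inside each class converts the $q$-sum into the $\hat q$-sum, which is exactly $\Prob_{\underline{w}_1}[\ZZ_1=y^{(1)},\dots,\ZZ_n=y^{(n)}]$.

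The main work -- and the chief difficulty, though of a bookkeeping character rather than a conceptual one -- is the projected marginal identity. For $y^{(1)}=(i,j_s)$ with $s\geq 2$ the projection forces $m=i$ and $n=s$, and the second line of the definition of $\hat q$ coincides with $q$, so the identity is immediate. For $s=1$ I fix $ab$ and note that the left-hand side contributes the single term $q(x,y^*_{ab})$, where $y^*_{ab}\in\partial C_{j_{i,1}}([x])$ is the unique boundary point with last two letters $ab$. On the right-hand side, precisely $N+1$ states receive the prescribed projection and suffix: the ``true'' target $(j_{i,1},y^*_{ab})$ from the first line of $\hat q$, together with the $N:=\#\{j_{s,t}\mid s\neq i,ab\}$ ``fake'' targets $(j_{m,n},\tilde y)$ with $m\neq i$ and $[\tilde y]=ab$ from the third line. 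By the cone-isomorphism identity $\mathds{L}(x,\bar y^{[i,j,ab]}ab)=\mathds{L}(x,y^*_{ab})$, each of these $N+1$ contributions equals $\frac{1}{N+1}q(x,y^*_{ab})$, and they sum to $q(x,y^*_{ab})$; the splitting factor $1/(N+1)$ in the definition of $\hat q$ is precisely designed to compensate for this multiplicity.
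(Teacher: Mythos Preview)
Your proof is correct and follows essentially the same strategy as the paper's: induction on $n$, with the key computation being the case split $s\geq 2$ versus $s=1$ showing that the splitting factor $1/(N+1)$ in the definition of $\hat q$ exactly compensates for the $N$ additional ``fake'' targets that $\pi$ folds onto $j_1$. Your organization is somewhat cleaner than the paper's: you isolate the one-step \emph{projected marginal identity} as a standalone lemma and run the induction over arbitrary initial states $\underline{w}$, whereas the paper carries the extra information $[\mathbf{W}_{n+1}]=ab$ (respectively $[\mathbf{x}_{n+1}]=ab$) inside the induction hypothesis and redoes the case split inline at each step; but the underlying computation and the reliance on the fact that $q$, $\hat q$, $\pi$ depend on the current state only through its last two letters are identical.
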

\begin{proof}
We prove the claim by induction
on $n$. First, let be $j,s\in\calI$ and $t^{(1)}=j_m$ with $2\leq m\leq n(s,j)$,
and let $a_0b_0,ab\in\calA^2$ with $\tau(C(a_0b_0))=s$ and $\tau(C(ab))=j$. If
$C_{j,m}$ is the $m$-th cone of type $j$ in the covering of $C(a_0b_0)$ then
there is a unique word $\bar x_0=\bar x_0^{[s,j,m,ab]}\in\calA^\ast$ with $\bar x_0ab\in\partial
C_{j,m}$. With this notation we get:
\begin{eqnarray*}
\Prob\bigl[\Y_1=(s,j_m),[\mathbf{W}_2]=ab\bigr] &=&
\sum_{(u_{k,l},x)\in\mathcal{W}:u=s}
\Prob[(\mathbf{i}_1,\mathbf{W}_1)=(s_{k,l},x)]\cdot  q(x,\bar x_0ab)\\
&=& \sum_{(u_{k,l},x)\in\mathcal{W}:u=s} \hat\mu_1(s_{k,l},x) \hat
q\bigl((s_{k,l},x),(j_{s,m},\bar x_0ab)\bigr)\\
&=& \Prob\bigl[\ZZ_1=(s,j_m),[\mathbf{x}_2]=ab\bigr].
\end{eqnarray*}
% Observe that there is a $1-1$-correspondance between $(t_{v,w},ab)$ and
% $(t_{v,w},y_oab)$ with $y_oab\in\partial C_w$ where $C_w$ is the $w$-th cone of
% type $t$ in a cone of type $v$.
% \par
Now we turn to the case $t^{(1)}=j_1$. Once again, if $C_{j,1}$ is the first
cone of type $j$ in the covering of $C(a_0b_0)$ then there is some unique
$\bar x_0=\bar x_0^{[s,j,1,ab]}\in\calA^\ast$ with $\bar x_0ab\in\partial
C_{j,1}$. We get:
\begin{eqnarray*}
&&\Prob\bigl[\ZZ_1=(s,j_1),[\mathbf{x}_2]=ab\bigr] \\
&=& \sum_{(u_{k,l},x)\in\mathcal{W}:u=s} \hat\mu_1(s_{k,l},x) \Bigl[
 \hat q\bigl((s_{k,l},x),(j_{s,1},\bar x_0ab)\bigr)+\sum_{\substack{(t_{p,q},y)\in\mathcal{W}:\\ t=j,p\neq s,[y]=ab}} \hat
   q\bigl((s_{k,l},x),(j_{p,q},y)\bigr)\Bigr]\\
&=& \sum_{\substack{(u_{k,l},x)\in\mathcal{W}:\\ u=s}} \hat\mu_1(s_{k,l},x) \biggl[
 \frac{q\bigl((s_{k,l},x),(j_{s,1}, \bar x_0ab)\bigr)}{\# \{t_{\kappa_1,\kappa_2}
   \mid  \kappa_1\neq
   s, ab\}+1}+\sum_{\substack{(t_{p,q},y)\in\mathcal{W}:\\ t=j,p\neq
     s,\\ [y]=ab}} 
\frac{q\bigl((s_{k,l},x),(j_{s,1},\bar x_0ab)\bigr)}{\# \{t_{\kappa_1,\kappa_2} \mid
  \kappa_1\neq s, ab\}+1}\biggr]\\
&=& \sum_{(u_{k,l},x)\in\mathcal{W}:u=s}
\Prob\bigl[(\mathbf{i}_1,\mathbf{W}_1)=(s_{k,l},x)\bigr]\cdot  
q(x,\bar x_0ab)
= \Prob\bigl[\Y_1=(s,j_1),[\mathbf{W}_2]=ab\bigr].
\end{eqnarray*}
Now, in both cases we obtain
\begin{eqnarray*}
\Prob\bigl[\ZZ_1=(s,t^{(1)})\bigr] &=& \sum_{ab\in\calA^2}
\Prob\bigl[\ZZ_1=(s,t^{(1)}),[\mathbf{x}_2]=ab\bigr]\\
&=& \sum_{ab\in\calA^2}\Prob\bigl[\Y_1=(s,t^{(1)}),[\mathbf{W}_2]=ab\bigr]
= \Prob\bigl[\Y_1=(s,t^{(1)})\bigr].
\end{eqnarray*}
We now perform the induction step where we will use the induction assumption
\begin{eqnarray}\label{equ:ind-ass}
&&\Prob\bigl[\Y_1=(s^{(1)},t^{(1)}),\dots,\Y_{n}=(s^{(n)},t^{(n)}),[\mathbf{W}_{n+1}]=ab\bigr]\\
&=& \Prob\bigl[\ZZ_1=(s^{(1)},t^{(1)}),\dots,\ZZ_{n}=(s^{(n)},t^{(n)}),[\mathbf{x}_{n+1}]=ab\bigr]. \nonumber
\end{eqnarray}
First, consider the case $(s^{(n+1)},t^{(n+1)})=(s,j_m)$
with $s,j\in\calI$ and $2\leq m\leq n(s,j)$. This implies that $\mathbf{t}_{n+1}$ has
the form $s_{\ast,\ast}$ and $\mathbf{t}_{n+2}=j_{s,m}$. Let $C_{j,m}$ be the $m$-th cone of type $j$ in the covering of
$C(a_0b_0)$, where $a_0b_0\in\calA^2$ with $\tau(C(a_0b_0))=s$. If $ab\in\calA^2$ with 
$\tau(C(ab))=j$ then there is some unique $\bar x_0=\bar
x_0^{[s,j,m,ab]}\in\calA^\ast$ with $\bar x_0ab\in\partial
C_{j,m}$. In this case we obtain: %Since we have an underlying Markov chain we obtain:
\begin{eqnarray*}
&&
\Prob\bigl[\ZZ_1=(s^{(1)},t^{(1)}),\dots,\ZZ_{n+1}=(s^{(n+1)},j_m),[\mathbf{x}_{n+1}]=a_0b_0,[\mathbf{x}_{n+2}]=ab\bigr]\\
&=&
\sum_{\substack{(u_{k,l},w_0)\in\mathcal{W}:\\
    u=s, [w_0]=a_0b_0}}\Prob\bigl[\ZZ_1=(s^{(1)},t^{(1)}),\dots,\ZZ_{n}=(s^{(n)},t^{(n)}),\mathbf{t}_{k+1}=u_{k,l},
\mathbf{x}_{n+1}=w_0\bigr]\\
&&\quad\quad \cdot 
\hat q\bigl((s_{k,l},w_0),(j_{s,m},\bar x_0ab)\bigr)\\
&=&
\Prob\bigl[\ZZ_1=(s^{(1)},t^{(1)}),\dots,\ZZ_{n}=(s^{(n)},t^{(n)}),[\mathbf{x}_{n+1}]=a_0b_0\bigr]
\frac{\xi(ab)}{\xi(a_0b_0)}\mathds{L}(a_0b_0,\bar x_0ab)\\
&=& \Prob\bigl[\Y_1=(s^{(1)},t^{(1)}),\dots,\Y_{n}=(s^{(n)},t^{(n)}),[\mathbf{W}_{n+1}]=a_0b_0\bigr]
\frac{\xi(ab)}{\xi(a_0b_0)}\mathds{L}(a_0b_0,\bar x_0ab)\\
&=&
\Prob\bigl[\Y_1=(s^{(1)},t^{(1)}),\dots,\Y_{n+1}=(s^{(n+1)},j_m),[\mathbf{W}_{n+1}]=a_0b_0,[\mathbf{W}_{n+2}]=ab\bigr].
\end{eqnarray*}
% In this case
% \begin{eqnarray*}
% &&
% \Prob\bigl[\widehat{\Y}_1=(s^{(1)},t^{(1)}),\dots,\widehat{\Y}_{n+1}=(s^{(n+1)},t^{(n+1)})\bigr]\\
% &=& \sum_{a_0b_0,ab\in\calA^2}
% \Prob\bigl[\widehat{\Y}_1=(s^{(1)},t^{(1)}),\dots,\widehat{\Y}_{n+1}=(s^{(n+1)},t^{(n+1)}),[\mathbf{x}_{n+1}]=a_0b_0,[\mathbf{x}_{n+2}]=ab\bigr]\\
% &=&
% \sum_{a_0b_0,ab\in\calA^2}\Prob\bigl[\Y_1=(s^{(1)},t^{(1)}),\dots,\Y_{n+1}=(s^{(n+1)},t^{(n+1)}),[\mathbf{W}_{n+1}]=a_0b_0,[\mathbf{W}_{n+2}]=ab\bigr]\\
% &=& \Prob\bigl[\Y_1=(s^{(1)},t^{(1)}),\dots,\Y_{n+1}=(s^{(n+1)},t^{(n+1)})\bigr].
% \end{eqnarray*}
Now we turn to the case $(s^{(n+1)},t^{(n+1)})=(s,j_1)$. This implies again that $\mathbf{t}_{n+1}$ has
the form $s_{\ast,\ast}$. Once again, if $C_{j,1}$ is the first
cone of type $j$ in the covering of $C(a_0b_0)$ (of type $s$) then there is
some unique
$\bar x_0=\bar x_0^{[s,j,1,ab]}\in\calA^\ast$ with $\bar x_0ab\in\partial
C_{j,1}$. We get by distinguishing whether $t^{(n+1)}=j_1$ arises from
$\mathbf{t}_{n+2}=j_{s,1}$ or $\mathbf{t}_{n+2}=j_{k,l}$ with $k\neq s$:
% Once again, 
%
% Recall that, if $x$ is in a cone of type $i$ and $x\in\partial C_{j,1}$ and
% $\tau(C(ab))=j$ then there is unique $x_0$ with $x=x_0ab$.
% We now turn to the case $t^{(n+1)}=j_1$; then for
% $a_0b_0,ab\in\calA^2$ with $\tau(C(a_0b_0))=s^{(n+1)}$ and $\tau(C(ab))=j$ we
% get -- since we have an underlying Markov chain --
\begin{eqnarray*}
&&
\Prob\bigl[\ZZ_1=(s^{(1)},t^{(1)}),\dots,\ZZ_{n+1}=(s^{(n+1)},j_1),[\mathbf{x}_{n+1}]=a_0b_0,[\mathbf{x}_{n+2}]=ab\bigr]\\
&=& \sum_{\substack{(u_{p,q},w_0)\in\mathcal{W}:\\ u=s, [w_0]=a_0b_0}}
\Prob\bigl[\ZZ_1=(s^{(1)},t^{(1)}),\dots,\ZZ_{n}=(s^{(n)},t^{(n)}),\mathbf{t}_{n+1}=u_{p,q},\mathbf{x}_{n+1}=w_0\bigr]\\
&&\quad \cdot \Bigl( \hat q\bigl((s_{p,q},w_0),(j_{s,1},\bar x_0ab)\bigr)
+ \sum_{\substack{(t_{k,l},y)\in\mathcal{W}:\\ t=j,k\neq s,[y]=ab}} \hat q\bigl((s_{p,q},w_0),(j_{k,l},y)\bigr)\Bigr)\\
&=&
\Prob\bigl[\ZZ_1=(s^{(1)},t^{(1)}),\dots,\ZZ_{n}=(s^{(n)},t^{(n)}),[\mathbf{x}_{n+1}]=a_0b_0\bigr]
\\
&&\quad \cdot
\biggl[ \frac{\xi(ab)}{\xi(a_0b_0)}\frac{\mathds{L}(a_0b_0,\bar x_0ab)}{\#\{j_{k,l} \mid
  k\neq s, ab \}+1}+ \sum_{\substack{(t_{k,l},y)\in\mathcal{W}:\\ t=j, k\neq
    s,\\ [y]=ab}}
\frac{\xi(ab)}{\xi(a_0b_0)}\frac{\mathds{L}(a_0b_0,\bar x_0ab)}{\#\{j_{\kappa_1,\kappa_2}
  \mid  \kappa_1\neq s, ab \}+1}\biggr]\\
&=&
\Prob\bigl[\ZZ_1=(s^{(1)},t^{(1)}),\dots,\ZZ_{n}=(s^{(n)},t^{(n)}),[\mathbf{x}_{n+1}]=a_0b_0\bigr]
\frac{\xi(ab)}{\xi(a_0b_0)}\mathds{L}(a_0b_0,\bar x_0ab) \\
&=&
\Prob\bigl[\Y_1=(s^{(1)},t^{(1)}),\dots,\Y_{n}=(s^{(n)},t^{(n)}),[\mathbf{W}_{n+1}]=a_0b_0\bigr]
\frac{\xi(ab)}{\xi(a_0b_0)}\mathds{L}(a_0b_0,\bar x_0ab) \\
&=& \Prob\bigl[\Y_1=(s^{(1)},t^{(1)}),\dots,\Y_{n+1}=(s^{(n+1)},j_1),[\mathbf{W}_{n+1}]=a_0b_0,[\mathbf{W}_{n+2}]=ab\bigr].
\end{eqnarray*}
Hence,
\begin{eqnarray*}
&&\Prob\bigl[\ZZ_1=(s^{(1)},t^{(1)}),\dots,\ZZ_{n+1}=(s^{(n+1)},t^{(n+1)}),[\mathbf{x}_{n+2}]=ab\bigr]\\
&=&\sum_{a_0b_0\in\calA^2}
\Prob\bigl[\ZZ_1=(s^{(1)},t^{(1)}),\dots,\ZZ_{n+1}=(s^{(n+1)},t^{(n+1)}),[\mathbf{x}_{n+1}]=a_0b_0,[\mathbf{x}_{n+2}]=ab\bigr]\\
&=& \sum_{a_0b_0\in\calA^2}
\Prob\bigl[\Y_1=(s^{(1)},t^{(1)}),\dots,\Y_{n+1}=(s^{(n+1)},t^{(n+1)}),[\mathbf{W}_{n+1}]=a_0b_0,[\mathbf{W}_{n+2}]=ab\bigr]\\
&=&\Prob\bigl[\Y_1=(s^{(1)},t^{(1)}),\dots,\Y_{n+1}=(s^{(n+1)},t^{(n+1)}),[\mathbf{W}_{n+2}]=ab\bigr].
\end{eqnarray*}
This proves Equation (\ref{equ:ind-ass}) for all $n\in\N$, all $ab\in\calA^2$ and all $(s^{(1)},t^{(1)}),\dots,(s^{(n)},t^{(n)})\in
\mathcal{W}_\pi$. Finally, we obtain:
\begin{eqnarray*}
&&\Prob\bigl[\ZZ_1=(s^{(1)},t^{(1)}),\dots,\ZZ_{n+1}=(s^{(n+1)},t^{(n+1)})\bigr]\\
&=& \sum_{ab\in\calA^2}
\Prob\bigl[\ZZ_1=(s^{(1)},t^{(1)}),\dots,\ZZ_{n+1}=(s^{(n+1)},t^{(n+1)}),[\mathbf{x}_{n+2}]=ab\bigr]\\
&=&  \sum_{ab\in\calA^2}
\Prob\bigl[\Y_1=(s^{(1)},t^{(1)}),\dots,\Y_{n+1}=(s^{(n+1)},t^{(n+1)}),[\mathbf{W}_{n+2}]=ab\bigr]\\
&=& \Prob\bigl[\Y_1=(s^{(1)},t^{(1)}),\dots,\Y_{n+1}=(s^{(n+1)},t^{(n+1)})\bigr].
\end{eqnarray*}
This finishes the proof.
\end{proof}
The statement of the lemma can be formulated in other words: the
process governed by $\widehat Q$ can be seen as a last entry time process, where one has
more subcones to enter (namely, the subcones of indices $j_{k,l},k\neq i$, when
being currently in a cone of type $i$), but the projection $\pi$ (in particular
due to the second branch in its definition in (\ref{equ:pi-def})) folds
the process down to the same hidden Markov chain $(\Y_k)_{k\in\N}$ in terms of
probability. With Propositions \ref{prop:Y-equal-Y-hat} and \ref{prop:Y-hatY-equation} we immediately obtain:
\begin{Cor}\label{cor:Z-entropy}
For almost every realisation $\bigl((s^{(1)},t^{(1)}),(s^{(2)},t^{(2)}),\dots\bigr)\in
\mathcal{W}_\pi^{\N}$,
$$
H(\Y)=\lim_{n\to\infty} -\frac1n \log \Prob\bigl[\ZZ_1=(s^{(1)},t^{(1)}),\dots,\ZZ_{n}=(s^{(n)},t^{(n)})\bigr].
$$
\end{Cor}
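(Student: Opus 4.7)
The plan is to observe that Corollary \ref{cor:Z-entropy} is essentially a transfer-of-measure statement: Proposition \ref{prop:Y-hatY-equation} tells us that the finite-dimensional distributions of $(\Y_k)_{k\in\N}$ and $(\ZZ_k)_{k\in\N}$ coincide, while Proposition \ref{prop:Y-equal-Y-hat} gives us the a.s.\ convergence statement for $(\Y_k)_{k\in\N}$. The corollary is obtained by pushing the former through the latter.

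More concretely, I would first note that Proposition \ref{prop:Y-hatY-equation} implies that the two sequences of random variables $(\Y_k)_{k\in\N}$ and $(\ZZ_k)_{k\in\N}$ induce the same probability measure on the sequence space $\mathcal{W}_\pi^{\N}$ (equipped with the product $\sigma$-algebra), since their cylinder-set probabilities agree. Call this common distribution $\mathbb{Q}$.

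Next, define the function $\Phi_n:\mathcal{W}_\pi^{\N}\to\mathbb{R}$ by
$$
\Phi_n(\underline y_1,\underline y_2,\dots):= -\frac{1}{n}\log \mathbb{P}\bigl[\Y_1=\underline y_1,\dots,\Y_n=\underline y_n\bigr],
$$
which, by Proposition \ref{prop:Y-hatY-equation}, equals $-\frac{1}{n}\log \mathbb{P}\bigl[\ZZ_1=\underline y_1,\dots,\ZZ_n=\underline y_n\bigr]$ on the support of $\mathbb{Q}$. Proposition \ref{prop:Y-equal-Y-hat} states that $\Phi_n\to H(\Y)$ on a set $A\subseteq \mathcal{W}_\pi^{\N}$ with $\mathbb{Q}(A)=1$. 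Since the law of $(\ZZ_k)_{k\in\N}$ is also $\mathbb{Q}$, it follows that for $\mathbb{Q}$-almost every realisation $((s^{(1)},t^{(1)}),(s^{(2)},t^{(2)}),\dots)\in\mathcal{W}_\pi^{\N}$ we have $\Phi_n\bigl((s^{(1)},t^{(1)}),(s^{(2)},t^{(2)}),\dots\bigr)\to H(\Y)$, which is exactly the claim of the corollary.

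There is no substantive obstacle here; the only thing to be careful about is to invoke the equality of laws at the level of the whole sequence (not just pointwise for each $n$) before transferring the almost-sure statement, and to use the identity of cylinder probabilities from Proposition \ref{prop:Y-hatY-equation} to replace $\mathbb{P}[\Y_1=\cdot,\dots,\Y_n=\cdot]$ by $\mathbb{P}[\ZZ_1=\cdot,\dots,\ZZ_n=\cdot]$ inside the logarithm.
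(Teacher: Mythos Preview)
Your proposal is correct and follows exactly the same approach as the paper, which simply states that the corollary is immediate from Propositions \ref{prop:Y-equal-Y-hat} and \ref{prop:Y-hatY-equation}. You have merely spelled out the details the paper leaves implicit: equality of finite-dimensional distributions extends to equality of laws on $\mathcal{W}_\pi^{\N}$, and the almost-sure convergence statement for $(\Y_k)_{k\in\N}$ therefore transfers verbatim to $(\ZZ_k)_{k\in\N}$.
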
 
\begin{flushright}$\Box$\end{flushright}
%, and it does
%not distinguish if $\hat{\mathbf{i}}_k=j_{i,1}$ or $\hat{\mathbf{i}}_k=j_{m,n}$, $m\neq i$.
% \par
%Hence, the Markov chains
% $\bigl((\mathbf{i}_{k},\W_{k}),(\mathbf{i}_{k+1},\W_{k+1})\bigr)_{k\in\N}$ and
% $\bigl((\hat{\mathbf{i}}_{k},\mathbf{x}_{k}),(\hat{\mathbf{i}}_{k+1},\mathbf{x}_{k+1})
% \bigr)_{k\in\mathbb{N}}$ lead to the same hidden Markov chain in
% terms of probability. 
The important difference between the underlying Markov chains $\bigl((\mathbf{t}_{k},\mathbf{x}_{k}),(\mathbf{t}_{k+1},\mathbf{x}_{k+1}) \bigr)_{k\in\mathbb{N}}$ and
$\bigl((\mathbf{i}_{k},\W_{k}),(\mathbf{i}_{k+1},\W_{k+1})\bigr)_{k\in\N}$ 
is that the transition matrix
$\widehat Q_2$ has strictly positive entries, while this must not necessarily hold for the transition matrix
of the Markov chain
$\bigl((\mathbf{i}_{k},\W_{k}),(\mathbf{i}_{k+1},\W_{k+1})\bigr)_{k\in\mathbb{N}}$. This
property will be important later.

\subsection{Proof of Theorem \ref{thm:entropy-continuity}}
\label{subsec:proof-analytic}

The crucial point will be the following lemma:
\begin{Lemma}\label{lemma:q-analytic}
The transition probabilities $q(w_1,w_2)$, $w_1,w_2\in\mathcal{W}_0$, vary real-analytically w.r.t. $\underline{p}\in\mathcal{P}(\underline{p}_0)$.
\end{Lemma}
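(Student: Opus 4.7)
The plan is to trace $q(w_1,w_2)$ back through the hierarchy of generating functions that define it and show real-analyticity propagates from the bottom up. Recall that
$$
q(x,y)=\frac{\xi([y])}{\xi([x])}\,\mathds{L}(x,y)\quad\text{for }y\in\mathcal{S}(x),
$$
where $\xi(ab)=1-\sum_{f\in\calA}H(ab,f|1)$, where $\mathds{L}$ decomposes into products of $\bar L(\cdot,\cdot|1)$ via (\ref{equ:mathdsL}), and where $\bar L(ab,cde|1)=\sum_{a_1b_1}\overline{G}(ab,a_1b_1|1)\,p(a_1b_1,cde)$ with $\overline{G}(\cdot,\cdot|1)$ determined by the linear system following (\ref{h-equations}) whose coefficients are polynomials in $\underline{p}$ and in the values $H(ab,c|1)$. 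Thus the whole problem reduces to showing that $H(ab,c|1)$ depends real-analytically on $\underline{p}\in\mathcal{P}(\underline{p}_0)$, since sums, products, quotients (with non-vanishing denominator $\xi([x])>0$ by suffix-irreducibility) and solutions of invertible linear systems preserve real-analyticity.

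The core step is therefore to apply the real-analytic implicit function theorem to the polynomial system (\ref{h-equations}) at $z=1$. Collect the unknowns $H(ab,c|1)$ for $(ab,c)\in\calA^2\times\calA$ with $w_0ab\in\calL$ for some $w_0$ into a vector $\mathbf{H}$, and write (\ref{h-equations}) as $\mathbf{H}=F(\mathbf{H},\underline{p},z)$, where $F$ is a polynomial map in all its arguments. Real-analyticity of $\mathbf{H}$ in $(\underline{p},z)$ near $(\underline{p}_0,1)$ will follow once we verify that the Jacobian matrix $I-\partial_{\mathbf{H}}F(\mathbf{H},\underline{p}_0,1)$ is invertible at the relevant solution.

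The hard part will be precisely this invertibility check, which is where transience enters. I would argue as follows: differentiating the defining equations of $H(ab,c|z)$ with respect to $\mathbf{H}$ and interpreting each derivative probabilistically, the matrix $\partial_{\mathbf{H}}F$ at $z=1$ can be identified with a substochastic operator describing the first-passage structure of excursions through words of length two below the current suffix. Because $(X_n)_{n\in\N_0}$ is transient and satisfies Assumption \ref{ass:suffix}, the return probabilities to the set $\calA^2$ encoded by this matrix have spectral radius strictly less than $1$; indeed, finiteness of $G(w_1,w_2|R)$ for some $R>1$ proven in Lemma \ref{lemma:R>1} translates into an exponential moment bound for the excursions counted by $H(ab,\cdot|1)$, giving $\rho(\partial_{\mathbf{H}}F)<1$. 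Hence $I-\partial_{\mathbf{H}}F$ is invertible, and the analytic implicit function theorem applies.

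With $\mathbf{H}$ analytic, the linear system for $\overline{G}(\cdot,\cdot|1)$ has coefficients analytic in $\underline{p}$; its governing matrix is the infinitesimal Green operator restricted to the cone interior, again invertible by the same spectral-radius-less-than-one argument (this is just a restatement that $\overline{G}(ab,cd|1)<\infty$, cf.\ the remark after Lemma \ref{lemma:R>1}). Hence $\overline{G}(\cdot,\cdot|1)$ is real-analytic, and then so are $\bar L(\cdot,\cdot|1)$, $\mathds{L}(x,y)$ (a finite sum of products of $\bar L$'s by (\ref{equ:mathdsL})), and $\xi(ab)$. Since $\xi(ab)>0$ uniformly on $\mathcal{P}(\underline{p}_0)$ in a small neighbourhood of $\underline{p}_0$ (by continuity and positivity at $\underline{p}_0$), the quotient defining $q(w_1,w_2)$ is real-analytic, proving the lemma.
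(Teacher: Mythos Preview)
Your reduction is correct and your proposal would work, but the paper takes a different and considerably shorter route that avoids the implicit function theorem entirely. Instead of analysing the Jacobian of the quadratic system (\ref{h-equations}), the paper exploits the fact that $H(ab,c|1)$, viewed as a function of $\underline{p}$, is a power series with \emph{non-negative} coefficients in the variables $p_1,\dots,p_d$ (each coefficient counts weighted paths). Since by Lemma~\ref{lemma:R>1} one has $H(ab,c|1+\delta)<\infty$ for some $\delta>0$, and since replacing $z$ by $1+\delta$ amounts to evaluating this same multivariate series at the dilated point $(p_1(1+\delta),\dots,p_d(1+\delta))$, the series converges at a point strictly larger than $\underline{p}$ coordinatewise. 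Non-negativity of the coefficients then forces absolute convergence on the whole polydisk, so $\underline{p}$ lies in the interior of the domain of convergence and real-analyticity is immediate. The same argument is applied directly to $\bar L(ab,cde|1)$, bypassing $\overline{G}$ altogether.

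Your approach via the implicit function theorem is the more classical one and is in principle sound; it is also more robust, since it does not rely on sign information about the coefficients. The price you pay is having to verify invertibility of $I-\partial_{\mathbf{H}}F$ at $z=1$, and your justification of this step (``spectral radius strictly less than $1$ because $R>1$'') is only sketched. It can be made rigorous---the entries of $\partial_{\mathbf{H}}F$ are non-negative and increase in $z$, and a singularity of the implicit solution would force the Perron eigenvalue to hit $1$ before $z=R$---but this requires an additional monotonicity/continuity argument that the paper's method sidesteps completely.
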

\begin{proof}
In order to show that $q(w_1,w_2)$ varies real-analytically in $\underline{p}$
it suffices to show analyticity of $H(ab,c)$, $ab\in\calA^2$, $c\in\calA$,
and $\bar L(ab,cde)$, $d,e\in\calA$, due to Proposition \ref{prop:W-process}. 
%Hence, we prove real-analyticity of these generating functions. 
The function $z\mapsto H(ab,c|z)$ has radius of
convergence bigger than $1$, which can be easily deduced from Lemma \ref{lemma:R>1}. Thus, for $\delta>0$
small enough, we have
$$
\infty > H(ab,c|1+\delta)=\sum_{n\geq 1} \Prob_{ab}[X_n=c,\forall m<n:
|X_m|\geq 2](1+\delta)^n.
$$
The probability $\Prob_{ab}[X_n=c,\forall m<n:
|X_m|\geq 2]$ can be rewritten as 
$$
\sum_{\substack{n_1,\dots,n_d\geq 1:\\ n_1+\dots+n_d=n}} c(n_1,\dots,n_d)p_1^{n_1}\cdot
\ldots \cdot p_d^{n_d},\quad c(n_1,\dots,n_d)\in\N_0,
$$
where $p_1,\dots,p_d$ correspond to the non-zero entries of the vector $\underline{p}$.
Therefore,
$$
H(ab,c|1+\delta)=\sum_{n\geq 1}\sum_{\substack{n_1,\dots,n_d\geq 1:\\ n_1+\dots+n_d=n}} c(n_1,\dots,n_d)(p_1(1+\delta))^{n_1}\cdot
\ldots \cdot (p_d(1+\delta))^{n_d}<\infty.
$$
Hence, $\underline{p}$ lies in the interior of the domain of convergence of
$H(ab,c|1)$ when considered as a multivariate power series in the variables of
$\mathrm{supp}(\underline{p})=\{p_1,\dots,p_d\}$. This yields real-analyticity of $H(ab,c|1)$ in
$\underline{p}$. Analyticity of $\xi(ab)$  follows now directly from its
definition. One can show completely analogously that the functions $\bar
L(ab,cde|1)$ vary also real-analytically in $\underline{p}$
 since $\bar L(ab,cde|z)$ has also radius of convergence bigger than $1$, which
 can also be easily deduced from Lemma \ref{lemma:R>1}. This proves the statement of the lemma.
\end{proof}
Now we can prove:

\begin{proof}[Proof of Theorem \ref{thm:entropy-continuity}]
The claim follows now via the equation $h=\ell\cdot
H(\Y)/\lambda$. By Lemma \ref{lemma:q-analytic}, the invariant probability
measure $\nu_0$ of the process $(\W_k)_{k\in\N}$ varies real-analytically in
some neighbourhood of $\underline{p}_0$,
since $\nu_0$ is the solution of a linear
system of equations in terms of $q(\cdot,\cdot)$; hence, $\lambda$ (given in
(\ref{equ:lambda})) varies
analytically. 
\par
Moreover, the transition matrix $\widehat{Q}_2$ of the process
$\bigl((\mathbf{t}_k,\mathbf{x}_k),(\mathbf{t}_{k+1},\mathbf{x}_{k+1})\bigr)_{k\in\N}$ 
has strictly positive entries. Therefore, we can apply the analyticity result for entropies of hidden Markov chains
of Han and Marcus \cite[Theorem 1.1]{han-marcus06} on $(\ZZ_k)_{k\in\N}$ and
obtain together with Corollary \ref{cor:Z-entropy}  that  $H(\Y)$
is also real-analytic in
some neighbourhood of $\underline{p}_0$; at this point it is crucial that $\widehat{Q}_2$  
has strictly positive entries in order to be able to apply \cite[Theorem
1.1]{han-marcus06}, which was our motivation for the definition of
the process $(\mathbf{t}_k,\mathbf{x}_k)_{k\in\N}$ and $(\mathbf{Z}_k)_{k\in\N}$. 
\par
Real-analyticity of $\ell$ can be shown completely analogously to the proof of
Lemma \ref{lemma:q-analytic} with the help of the formula for $\ell$ given in
\cite[Theorem 2.4]{gilch:08}.  This finishes the proof.
\end{proof}

\begin{appendix}
%\section{Remarks}
\label{subsec:remarks}
\section{Remarks on Assumptions \ref{ass:weak} and \ref{ass:suffix}}
\subsection{Generalization of Suffix-Irreducibility}
\label{subsec:remarks-1}
In this section we make a discussion on Assumption \ref{ass:suffix}, where we show how to relax this condition in some way and that it cannot be dropped completely. First, recall that suffix-irreducibility leads to the fact that the process
$(\mathbf{W}_k)_{k\in\N}$ is irreducible. One can weaken the asssumption of suffix-irreducibility to the assumption that
\begin{equation}\label{equ:suffix-new}
\Prob[\forall n\in\N: |X_n|\geq |w| \mid X_0=w]>0 \quad \forall w\in\calL,
\end{equation}
or equivalently that $H(ab,c|1)<1$ for all $a,b,c\in\calA$. This means that, for every $w\in\calL$, there is some $ab\in\calA^2$  such that 
$$
\Prob[\exists n\in\N: [X_n]=ab, \forall k\leq n: |X_k|\geq |w| \mid X_0=w]>0 \ \textrm{ and } \ H(ab,\cdot|1)<1.
$$
In this case the process $(\mathbf{W}_k)_{k\in\N}$ is not necessarily
irreducible any more, but it still has a finite state space. Let $C_1,\dots,C_r$ be the essential classes of the state
space of $(\mathbf{W}_k)_{k\in\N}$. Then $(\mathbf{W}_k)_{k\in\N}$ will almost
surely take only values in one of these classes up to finitely many exemptions
for small $k\in\N$; the class depends then on the concrete realization. If we
condition on the fact that $(\mathbf{W}_k)_{k\in\N}$ will finally enter the
class $C_i$, then -- on this event -- the entropy rate $h_i$ and the drift $\ell_i$ can be
calculated as shown in the irreducible case and as in \cite{gilch:08}: we just have to replace $(\mathbf{W}_k)_{k\in\N}$ by $(\mathbf{W}_{T+k})_{k\in\N}$, where $T$ is the smallest index with $\tau(\mathbf{W}_T)\in C_i$. The overall entropy rate and drift are then given by
\begin{eqnarray*}
h &=& \lim_{n\to\infty}\frac1n \mathbb{E}[-\log\pi(X_n)]=\sum_{i=1}^r h_i\cdot \Prob[(\mathbf{W}_k)_{k\in\N} \textrm{ finally enters } C_i], \\
 \ell & = & \lim_{n\to\infty}\frac1n \mathbb{E}[|X_n|]=\sum_{i=1}^r \ell_i\cdot \Prob[(\mathbf{W}_k)_{k\in\N} \textrm{ finally enters } C_i].
\end{eqnarray*}
%Let us remark that the same argument holds for the rate of escape of our random walk on $\mathcal{A}^\ast$, that is, assuming (\ref{equ:suffix-new}) one can compute the rate of escape by the formula given in \cite{gilch:08} by conditioning in which essential class the process $(\mathbf{W}_k)_{k\in\N}$ will finally end. 
Since the probabilities $\Prob[(\mathbf{W}_k)_{k\in\N} \textrm{ finally enters } C_i]$ are the solutions of a finite system of linear equations with coefficients $q(\cdot,\cdot)$, they vary also analytically. Hence, condition (\ref{equ:suffix-new}) also implies our result on analyticity of the entropy.
\par
If the property (\ref{equ:suffix-new}) does not hold, then the random walk may
take some long deviations between the last entry times $\e{k-1}$ and $\e{k}$
such that $\mathbb{E}[\e{k}-\e{k-1}]=\infty$; see Example
\ref{ex:infinite-deviation} below. One can show that, in the case of infinite
expectation, this leads to $\lim_{n\to\infty} k/\e{k}=0$, implying
$\liminf_{n\to\infty} |X_n|/n=0$; an analogous statement is shown in
\cite{gilch-mueller}, where the proof can be adapted easily to the present
context. This allows no conclusion on the entropy with our techniques, since
$l(X_n)=-\log L(o,X_n|1)$ can not be compared with $-\log \pi_n(X_n)$ any more  as it was done in the proof of Proposition \ref{prop:L-limit}. But we underline that this setting with deviations of expected infinite length constitutes a degenerate case.

\begin{Ex}\label{ex:infinite-deviation}
Let be $\calA=\{a,b,c,d\}$ and set
\begin{eqnarray*}
&&p(o,a_1)=p(a_1,o)=\frac{1}{4} \ \forall a_1\in\{a,b,c\},\
p(o,d)=\frac{1}{4},\ p(d,o)=\frac12, \\
&& p(a_1,a_1a_2)=p(a_1a_3,a_1)=\frac{1}{4} \ \forall a_1\in\{a,b,c\}, a_2\in\calA\setminus \{a_1\}, a_3\in\calA\setminus\{a_1,d\},\\
&& p(a_1a_2,a_1a_2a_3)=\frac{1}{4} \ \forall a_1,a_2\in\{a,b,c\},a_1\neq a_2, \forall a_3\in\calA\setminus \{a_2\}, \\
%&& p(ab,a)=p(ac,a)=p(ba,b)=p(bc,b)=p(ca,c)=p(cb,c)=\frac{1}{4}\\
&&p(ad,add)=p(bd,bdd)=p(cd,cdd)=\frac12,\\
&& p(d,dd)=p(dd,ddd)=p(dd,d)=\frac12, \ p(ad,a)=p(bd,b)=p(cd,c)=\frac12.
\end{eqnarray*}
The associated graph $\mathcal{G}$ can be identified as follows: the vertex set
is given by $\mathbb{T}_3\times \mathbb{N}_0$, where
$\mathbb{T}_3=(\mathbb{Z}/2\mathbb{Z})\ast (\mathbb{Z}/2\mathbb{Z})\ast
(\mathbb{Z}/2\mathbb{Z})=\langle a,b,c\mid a^2=b^2=c^2=1\rangle$, and the
adjacency relation is defined via
$(a_1\dots a_k,m)\sim (b_1\dots b_l,n)$ if and only if 
$$
\begin{cases} 
a_1\dots a_k=b_1\dots b_l \land |m-n|=1 & \textrm{ or } \\
 m=n=0 \land k=l+1 \land a_1\dots a_{k-1}=b_1\dots b_l \land a_k\neq a_{k-1} & \textrm{ or } \\
m=n=0 \land  k+1=l \land a_1\dots a_{k}=b_1\dots b_{l-1} \land b_l\neq b_{l-1}.
 \end{cases}
$$
The graph $\mathcal{G}$ can be  visualized as follows: take a homogeneous tree
of degree $3$, where the vertices are described by words over $\{a,b,c\}$ such
that two consecutive letters are different; attach to each vertex a half-line $\mathbb{N}$, where the steps on the half-line are made with equal probability of $\frac12$; the vertices $(w,0)$ correspond to the vertices of the tree and one chooses with equal probability of $\frac{1}{4}$ one of the four neighbour vertices for the next step. This implies that the random walk will stay only for some finite time in each half-line before making a step in the tree part of $\mathcal{G}$. Moreover, it is not hard to see that the random walk converges to some infinite word over the subalphabet $\{a,b,c\}$. But it is well-known that the random walk needs in expectation infinite time to leave one of the halflines, that is, the expected time for reaching ``$a$'' when starting at ``$ad$'' is infinite. This implies that $\mathbb{E}[\e{k}-\e{k-1}]=\infty$.
\end{Ex}

%(or the weaker assumption $(\ast)$ from Subsection \ref{subsec:remarks-1})
%implies $R(w_1,w_2)>1$.

\subsection{Weak Symmetry Assumption}
\label{sub:weak-symmetry}

The purpose for introducing the weak symmetry assumption is that the random
walk becomes irreducible and that the cones become strongly connected
subgraphs. A weaker but still sufficient condition is given as follows: if
$w_0\in\calL$ and $w_1,w_2\in C(w_0)$ with 
$$
\mathbb{P}[\exists n\in\mathbb{N}: X_n=w_2,\forall m\leq n: X_m\in C(w_0)\mid
X_0=w_1]>0
$$ 
then $\mathbb{P}[\exists n\in\mathbb{N}: X_n=w_1,\forall
m\leq n: X_m\in C(w_0)\mid X_0=w_2]>0$. Under this weaker condition the random
walk still remains irreducible and the Green function's radius of convergence $R$
is strictly bigger than $1$. Also, the cones remain strongly connected and
$C(w)=C(w')$ if $w'\in\partial C(w)$. 
\par
If this connectedness of cones is not satisfied then the definition of cones
and coverings of cones by subcones gets more complicated. In that case  the
coverings depend on the boundary point from which one constructs the covering
yielding coverings by possibly non-disjoint subcones. In particular, Lemma \ref{lem:cone-properties} does not necessarily hold. This would lead to a more detailed and
complicated case distinction in order to get coverings by disjoint subcones. Since there will be no additional gain and the involving techniques remain the same we used weak symmetry for ease of presentation.

\section{Switching from the $K$-dependent Case to the Blocked Letter Language}
\label{sub:K-dependent}

In this section we make a discussion on the transition from the $K$-dependent case (that is, the transition probabilities depend on the last $K$ letters and between two steps of the random walk only the last $K$ letters may be replaced by a word of length of at most $2K$) to the blocked letter language (compare with the explanations in Section \ref{sec:notation}). Obviously, if the $K$-dependent random walk is weakly symmetric then the random walk on the blocked letter language is weakly symmetric, too. Suffix-irreducibility in the $K$-dependent case means that, for all $w\in\calL$ and every $w_0\in\calA^K$, the random walk starting at $w$ has positive probability to visit some word ending with $w_0$ by only passing through words in $\calA_{\geq |w|}$.
However, suffix-irreducibility in
the $K$-dependent case does, in general, not necessarily yield
suffix-irreducibility of the blocked letter language. But as already
explained in Appendix \ref{subsec:remarks-1} suffix irreducibility can be relaxed by the
assumption (\ref{equ:suffix-new}), and the blocked letter language inherits this assumption from the $K$-dependent case.
\par
Finally, we want to discuss the cases when the $K$-dependent random walk is expanding or not. Define cones in the $K$-dependent case as at the beginning of Subsection \ref{subsec:cone-def}. For any $w\in\calA^\ast$, denote by $[w]_{K}$ the last $K$ letters. Two cones $C(w_1)$ and $C(w_2)$, $w_1,w_2\in\calA^\ast$ are then isomorphic if $C([w_1]_{K})=C([w_2]_{K})$.
The same properties of cones and coverings (that is, nestedness or disjointness of cones, construction of coverings of cones by subcones, etc.) from Section \ref{sec:cones} can be transferred to the $K$-dependent case analogously.
If the graph $\mathcal{G}$ is \textit{not} expanding in the $K$-dependent case then one can show analogously as in Subsection \ref{sub:non-expanding} that the random walk converges to one out of finitely many deterministic infinite words. In the following we will show that  blocked letter language random walk is expanding if $\mathcal{G}$ is expanding in the $K$-dependent case. Recall that $X_\infty$ is the infinite limiting random word of our $K$-dependent random walk.
%
%
%
%In the $K$-dependent case the cones $C(w)$, $w\in\calL$ are defined in the same
%way as in the blocked-letter language. We show:
%\begin{Lemma}
%Let be $w_0,w_1,w_2\in\calL$ with $C(w_1),C(w_2)\subsetneq C(w_0)$ and $C(w_1)\cap C(w_2)=\emptyset$. Let $W_1$ and
%$W_2$ be the $K$-blocked-letter expression for $w_1$ and $w_2$. Then $C(W_1)\cap C(W_2)=\emptyset$.
%\end{Lemma}
%\begin{proof}
%In the following capital letters will be letters in the $K$-blocked-letter language,
%and underlined capital letters will be words in the blocked-letter language. We
%give representations of the original words over $\calA$ in the blocked-letter
%language:
%\begin{eqnarray*}
%w_0 & = & A_1\dots A_r,\\
%w_1 & = & A_1\dots A_{r-2} A_1'\dots A_s' = \underline{A},\\
%w_2 & = & A_1\dots A_{r-2} B_1'\dots B_t' = \underline{B}.
%\end{eqnarray*}
%If there is some $j\in\{1,\min{s-2,t-2}\}$ with $A_j'\neq B_j'$ then we
%obviously must have $C(\underline{A})\cap  C(\underline{A})=\emptyset$ by
%definition of cones. Hence, assume w.l.o.g. that
%$$
%\underline{B}=A_1\dots A_{r-2} A-1'\dots A_{s-2}B_{s-1}'\dots B_t'.
%$$
%Write $A_{s-1}'=a_1\dots a_K$, $A_s=a_1''\dots
%a_l''$ and $B_{s-1}'\dots B_t'=b_1'\dots b_m'$. Assume that there is a path
%$A_{s-1}'A_s'$ to $B_{s-1}'\dots B_t'$ through words of length of at least $2$.
%Then there is a path in $\calA^\ast$ from $a_1\dots a_Ka_1''\dots
%a_l''$ to $b_1'\dots b_m'$ through words of length at least $K+1$. That is, we
%must have $A_{s-1}'=B_{s-1}'$.
%\end{proof}
%
\begin{Lemma}
If the $K$-dependent random walk is expanding then the support of $X_\infty$ is infinite.
%there are infinitely many prefixes $w\in\calA^\ast$ such that $\Prob[X_\infty
%\textrm{ has prefix } w]>0$.
\end{Lemma}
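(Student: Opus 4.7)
The plan is to show, by a recursive cone-splitting argument, that if $\mathcal{G}$ is expanding then the random walk has uncountably many (in particular, infinitely many) possible limiting infinite words, each of which is attained with positive probability.

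First I would fix an initial cone $C_0$ that the walk enters with positive probability and from which it never exits (such a cone exists, e.g.\ one of the $C_i^{(0)}$ from the covering of $\calL$, using transience together with $\xi([\cdot])>0$). By the expanding hypothesis, $C_0$ contains two proper disjoint subcones $C_{0,0}, C_{0,1}$. Inside $C_0$, by the very construction of coverings (Subsection \ref{subsec:covering}) together with suffix-irreducibility, the walk can reach any boundary point of $C_{0,j}$ with positive probability through words in $\calA^\ast_{\geq |w_0|}$, and having reached such a boundary word $w\in\partial C_{0,j}$ it remains in $C(w)\subseteq C_{0,j}$ forever with probability $\xi([w])>0$. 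Hence for each $j\in\{0,1\}$ the event $\{$walk eventually enters $C_{0,j}$ and never leaves$\}$ has positive probability.

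I would then iterate this step: since the random walk is expanding, every cone in the construction contains two proper disjoint subcones. Inductively, for every finite binary string $\sigma=\sigma_1\dots\sigma_n\in\{0,1\}^n$, I can pick a cone $C_\sigma$ such that $C_{\sigma 0}$ and $C_{\sigma 1}$ are proper disjoint subcones of $C_\sigma$, and such that with positive probability the walk successively enters $C_{\sigma_1},C_{\sigma_1\sigma_2},\dots,C_\sigma$ and stays inside $C_\sigma$ forever. Because distinct infinite binary strings $\sigma,\sigma'\in\{0,1\}^\N$ differ at some finite position $n$ and $C_{\sigma|_n\, 0}\cap C_{\sigma|_n\, 1}=\emptyset$, the limiting word on the event ``walk stays in $C_{\sigma|_n}$ for all $n$'' differs between $\sigma$ and $\sigma'$ (they already differ at a prefix of bounded length in $\calA^\ast$). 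Consequently, the collection $\{X_\infty\mid$ walk follows branch $\sigma\}$ produces distinct infinite words, all attained with positive probability.

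In particular, $\operatorname{supp}(X_\infty)$ contains at least one distinct element for each of the branches, hence is infinite (in fact uncountable). The main obstacle I expect is bookkeeping the positivity of the nested events: at each stage one must verify that, conditional on staying inside the current cone $C_\sigma$ forever, both subcones $C_{\sigma 0}$ and $C_{\sigma 1}$ are entered with positive probability, and for this one has to combine suffix-irreducibility (to move the walk towards the chosen boundary of a subcone) with the uniform lower bound $\xi([\cdot])>0$ (to make the walk stay there); up to this, the argument is merely a straightforward mirror of the non-expanding Lemma in Subsection \ref{sub:non-expanding}, with the roles of ``unique continuation'' and ``branching'' exchanged.
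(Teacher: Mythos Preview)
Your approach differs from the paper's: the paper argues by contradiction, assuming $\operatorname{supp}(X_\infty)$ is finite, choosing $N$ large enough that each connected component of $\mathcal{G}\setminus\{w:|w|<N\}$ contains at most one support point in its closure, and then producing inside one such component $C(w_0)$ two disjoint subcones $C(w_1),C(w_2)$ (with $|w_i|>|w_0|+K$) from each of which the walk escapes with positive probability---hence two boundary points in the closure of that component, contradicting the choice of $N$. You instead build a binary tree of nested disjoint cones and try to read off infinitely (indeed uncountably) many distinct limit words directly.

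There is, however, a genuine gap. Your parenthetical justification ``they already differ at a prefix of bounded length in $\calA^\ast$'' is false: two disjoint cones $C(w_1),C(w_2)$ at the same level can have roots that agree on all but their last $K$ letters, in which case every element of $C(w_1)$ and every element of $C(w_2)$ share the common prefix $w_1|_{|w_1|-K}=w_2|_{|w_2|-K}$, and nothing you have written rules out their closures in $\calA^{\N}$ meeting. So the step ``distinct branches $\sigma$ give distinct $X_\infty$'' is not established. (A secondary issue: for a fixed \emph{infinite} $\sigma$ the event ``walk stays in $C_{\sigma|_m}$ for all $m$'' is typically null, so ``all attained with positive probability'' is wrong as stated; this is inessential, since producing $2^n$ distinct support points at each finite stage $n$ would already suffice.)

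The paper's contradiction framework is what makes the missing step closable: once a component is pinned to a single support point $y$, any subcone that the walk finally enters must have root agreeing with $y$ on all but the last $K$ letters (since the walk, staying there forever, converges to $y$); iterating the branching then produces, at a common level $M$, more than $|\calA|^K$ pairwise disjoint subcones whose roots all have prefix $y|_{M-K}$ and hence differ only in their last $K$ letters---impossible by pigeonhole. Your direct route can be repaired with the same counting idea, but the bald injectivity claim as you phrased it does not hold.
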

\begin{proof}
Assume that $X_\infty$ has finite support. Choose $N\in\mathbb{N}$ large enough such that each connected component of $\mathcal{G}\setminus \{w\in\calL \mid |w| <N\}$ (that is, remove from $\mathcal{G}$ all vertices $w\in\calL$ with $|w|< N$ and their adjacent edges) contains in its closure only one point of the support of $X_\infty$. Take any of these connected components and denote it by $C$, and take any $w_0\in C$ with $|w_0|=N$. Then $\Prob[\forall n\geq 1: |X_n|\geq |w_0|\mid X_0=w_0]>0$.
%
%
%Take any $a_0b_0\in\calA$ with $F(o,a_0b_0|1)>0$. Choose two disjoint subcones
%$C_1=C(a_1b_1c_1),C_2=C(a_2b_2c_2)\subset C(a_0b_0)$ with $a_1b_1c_1,a_2b_2c_2\in\calA^3$. If $a_1\neq a_2$ then $\Prob[X_\infty \textrm{ has prefix } a_1]>0$ and
%$\Prob[X_\infty \textrm{ has prefix } a_2]>0$. Due to suffix-irreducbility we
%find in $C_1$ a word ending with $a_0b_0$ which implies existence of
%$\bar w\in\calA^\ast$ such that $\Prob[X_\infty \textrm{ has prefix } a_1\bar w
%a_1]>0$ and
%$\Prob[X_\infty \textrm{ has prefix } a_1\bar w a_2]>0$. 
%Iterating the last
%step yields that the statement of the lemma in the case $a_i\neq a_j$.
%\par
%Assume now that $a_1=a_2$ and that this holds for every choice of
%$a_0b_0$. 
%\par
Since each cone contains at least two proper subcones, we can find
disjoint subcones $C(w_1),C(w_2)$ of $C(w_0)$ such that $w_1,w_2\in\calL$ with
$|w_1|=|w_2|>|w_0|+K$. 
Due to condition (\ref{equ:suffix-new}) we have  $\Prob[\forall n\geq 1: |X_n|\geq |w_i|\mid X_0=w_i]>0$ for each $i\in\{1,2\}$. We remark that this follows also from suffix-irreducibility.
That is, if the random walk escapes to infinity inside $C(w_0)$ then it can
escape to infinity via the cone $C(w_1)$ or via the cone $C(w_2)$, which is
disjoint from $C(w_1)$. Thus, we have found two different boundary points of
$X_\infty$, which lie in the closure of $C$, a contradiction to our choice of
$N$ and $C$. Consequently, the support of $X_\infty$ cannot be finite.
%ALT:
%Since each cone contains at least two proper subcones, we can find
%disjoint subcones $C(w_1),\dots,C(w_m)$ of $C(w_0)$ such that
%$|w_1|=\dots =|w_m|>|w_0|+K$ and  $m> |\calA^K|$. Denote by $[w_0]_K$ the last $K$ letters of $w_0$.
%Due to condition (\ref{equ:suffix-new}) we have  $\Prob[\forall n\geq 1: |X_n|\geq |w_i|\mid X_0=w_i]>0$ for all $i\in\{1,\dots,m\}$. We remark that this follows also from suffix-irreducibility.
%Since $C(w_0)$ contains only one point in the support of $X_\infty$
% the words $w_1,\dots,w_m$ coincide for the first $|w_1|-K$
%letters. But this implies that at least two cones must be equal since the words
%$w_1,\dots,w_m$ can only differ by the last $K$ letters. This yields a
%contradiction to the choice of the subcones. Hence, the support of $X_\infty$ cannot be finite.
%ALT ENDE!
%
%
%If the random walk is
%quasi-deterministic this argument holds for all $w\in\calL$ with $|w|$ large
%enough, that is, if one chooses the subcones far
%away outside of some cone $C(w)$ which contains just a single infinite word
%towards which the random walk may converge.
\end{proof}
Now we get:
\begin{Cor}
If the $K$-dependent random walk is expanding then the associated
blocked letter language random walk is also expanding.
\end{Cor}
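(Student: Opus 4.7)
The plan is to prove the corollary by contraposition: I will show that if the blocked letter language random walk is \emph{not} expanding, then the $K$-dependent random walk cannot be expanding either. The guiding observation is that these two random walks are really the same Markov chain presented over two alphabets, the second being obtained from the first by the standard recoding trick that groups each $K$-tuple of letters from $\calA$ into one super-letter from $\calA^K$. Under this recoding, trajectories correspond bijectively and, in particular, the almost-sure limiting infinite random word $X_\infty$ of the two presentations are in natural bijection; hence $\mathrm{supp}(X_\infty)$ is finite in one presentation if and only if it is finite in the other.

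The first step would be to check that the prerequisites for applying the non-expanding analysis of Subsection \ref{sub:non-expanding} are already available for the blocked letter random walk. The paragraphs preceding this corollary explicitly record that weak symmetry (Assumption \ref{ass:weak}) is inherited by the blocked letter language from the $K$-dependent setting, and that the relaxed condition (\ref{equ:suffix-new}) (which is what the non-expanding argument actually needs) is inherited as well. With these in hand, the Lemma of Subsection \ref{sub:non-expanding} applies verbatim to the blocked letter walk and yields: if the blocked letter graph is not expanding, then its limiting infinite word takes only finitely many values.

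Suppose now, towards a contradiction, that the $K$-dependent random walk is expanding while the blocked letter language random walk is \emph{not}. Applying the Lemma just recalled to the blocked letter walk shows that its limiting infinite block-word has finite support. Translating this back through the $\calA \leftrightarrow \calA^K$ recoding bijection, the limiting word $X_\infty$ of the $K$-dependent walk also has finite support. This directly contradicts the preceding Lemma of this appendix (proved immediately above the statement of the corollary), which asserts that an expanding $K$-dependent random walk must have $\mathrm{supp}(X_\infty)$ infinite. The contradiction completes the argument.

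The one place that requires some care, and the only step I expect to have to argue explicitly rather than cite, is the bijection of limiting infinite words between the two presentations. One must verify that transience of the $\calA$-presentation is equivalent to transience of the $\calA^K$-presentation (so that the blocked letter walk indeed almost surely converges to an infinite block-word), and that a limiting block-word uniquely determines the corresponding limiting $\calA$-word, and conversely. Both facts follow directly from the recoding construction discussed at the beginning of the section and involve no new ideas, so no substantive obstacle is expected.
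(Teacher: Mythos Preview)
Your proposal is correct and follows essentially the same route as the paper's own proof: contraposition, use the non-expanding analysis to get finite support of the blocked-letter limiting word $X_\infty^{(B)}$, transfer finiteness of the support back to $X_\infty$ via the recoding bijection, and then invoke the preceding lemma. The paper handles the recoding-bijection step in a single sentence without further justification, so your level of detail there already exceeds what the paper provides.
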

\begin{proof}
Assume that the blocked letter language random walk is \textit{not}
expanding. Denote by $X_\infty^{(B)}$ the infinite limiting word w.r.t. the
blocked letter language.
Then $X_\infty^{(B)}$ is quasi-deterministic, that is, its support is a finite subset of $\calA_B^{\mathbb{N}}$, where $\calA_B$ is the blocked letter language alphabet.
But this yields that $X_\infty$ has also finite support in $\calA^{\mathbb{N}}$, and this in turn implies by the previous lemma that the $K$-dependent case cannot be expanding.
\end{proof}
%
%
%
%
%OLD:
% Vice versa, if the non-singular covering assumption is satisfied in the
%$K$-dependent case it follows directly that the blocked letter language also
%satisfies the non-singular covering assumption. 
Hence, concerning the property ``expanding'' we have shown that there is no gain or loss when switching from
$K$-dependent random walks to the blocked letter language random walk. 

\end{appendix}

\bibliographystyle{abbrv}
\bibliography{literatur}

\end{document}